\title{Notes on Statistically Invariant States in Stochastically Driven Fluid Flows}
\author{Nathan Glatt-Holtz\\
  \scriptsize{Department of Mathematics, Virginia Tech}\\
  \scriptsize{email: negh@math.vt.edu}}
\date{}
\numberwithin{equation}{section}
\newtheorem{Thm}{Theorem}[section]
\newtheorem{Lem}{Lemma}[section]
\newtheorem{Prop}{Proposition}[section]
\newtheorem{Cor}{Corollary}[section]
\newtheorem{Def}{Definition}[section]
\newtheorem{Rmk}{Remark}[section]
\newtheorem{Asmp}{Assumption}[section]
\newcommand{\indFn}[1]{1 \! \! 1_{#1}}
\newcommand{\E}{\mathbb{E}}
\newcommand{\Prb}{\mathbb{P}}
\newcommand{\RR}{\mathbb{R}}
\newcommand{\SN}{\mathbb{S}^N}
\newcommand{\JJ}{\mathcal{J}}
\newcommand{\AAA}{\mathcal{A}}
\newcommand{\MM}{\mathcal{M}}
\newcommand{\MD}{\mathfrak{D}}
\newcommand{\DD}{\mathbb{D}}
\newcommand{\dir}{\xi}
\newcommand{\Dtest}{\mathcal{T}}
\newcommand{\expCon}{\kappa}
\begin{document}
\markboth{Nathan Glatt-Holtz}
{Notes on Statistically Invariant States in Stochastically Driven Fluid Flows}

\maketitle

\begin{abstract}
These expository notes address certain stationary and ergodic properties of the
equations of fluid dynamics subject to a spatially degenerate (i.e. frequency localized), 
white in time gaussian forcing.  In order to provide
an accessible treatment of some recent progress in
this subject  (cf. \cite{HairerMattingly06, HairerMattingly2011, FoldesGlattHoltzRichardsThomann2013}) 
we will develop ideas in detail for a class of finite dimensional models.
\hfill DRAFT: \today.
\end{abstract}

\setcounter{tocdepth}{1}
\tableofcontents

\section*{Introduction}

Going back at least to \cite{Novikov1965}, stochastic forcing
has been used to model the large scale stirring driving turbulent
fluids.    One might thus hope that invariant measures for this class 
of stochastic PDEs would contain statistics predicted by theories of turbulent
flow.  As such, an ongoing mathematical challenge has been
to characterize these statistically stationary states, their attraction
properties and to obtain various asymptotics in this class of measures
for relevant physical parameters.

The use of a gaussian, white in time and spatially correlated (frequency
localized) forcing is attractive for several reasons.  Firstly the 
classical theories of turbulence posit that statistics,
for example the famous $k^{-5/3}$ cascade of Kolmogorov 
should be observed in the inertial range independent of the specific
details of the forcing.  Stochastic perturbation may thus be viewed as a proxy for
a `generic large scale forcing'.   A second advantage specific to 
white noise is that the effect of the stirring driving the system
is more transparent at the level of basic energy balance equations.  
Finally we have that the equations evolving probability distributions 
of the flow are the (degenerate) parabolic system, the Kolmogorov-Fokker-Plank equations,
rather than the first order Louisville equations encountered in
the deterministic case.  Thus one has a greater hope of obtaining 
uniqueness and ergodicity properties for statistically steady states.

Starting in the mid 90s with the seminal work \cite{FlandoliMaslowski1}
an extensive mathematical literature has developed around the question of uniqueness of invariant measures
for the 2d Navier-Stokes equations perturbed by white noise; see \cite{ZabczykDaPrato1996,
Mattingly1,Mattingly2, BricmontKupiainenLefevere2001, KuksinShirikyan1,KuksinShirikyan2, Mattingly03,MattinglyPardoux1,
HairerMattingly06, Kupiainen10, HairerMattingly2011, Debussche2011a, KuksinShirikian12}.
More recently in a series of groundbreaking works \cite{MattinglyPardoux1, HairerMattingly06, HairerMattingly2008, HairerMattingly2011}
methods to treat the physically important case of a spatially degenerate (that is frequency localized) forcing have been developed.   
These methods have proven to be a useful starting point for the analysis of other
interesting nonlinear stochastic PDEs arising in fluid dynamics,
\cite{ConstantinGlattHoltzVicol2013,FoldesGlattHoltzRichardsThomann2013,FriedlanderGlattHoltzVicol2014}.

These notes aim to provide an accessible and largely self-contained account of some
of the advances in \cite{MattinglyPardoux1, HairerMattingly06, HairerMattingly2008, HairerMattingly2011, FoldesGlattHoltzRichardsThomann2013}.
To simplify the discussion we present the ideas in the context of a class of finite dimensional models. However the analysis will be developed in such a way as to highlight methods that can be readily generalized to an infinite dimensional context.

\section{The Model Equations}
\label{sec:Model}

We will be studying systems of stochastic ordinary differential
equations of the general form
\begin{align}
  dU + (\nu AU + B(U,U)) dt = \sigma dW = \sum_{k=1}^d \sigma_{k} dW^k.
  \label{eq:fin:dim:model}
\end{align}
Here the solution $U: \Omega \times [0, \infty) \to \RR^N$ 
is a stochastic process.  The system is driven by a $d$-dimensional 
standard Brownian motion $W = (W_1, \ldots, W_d)$
where we have in mind that $d << N$.   We assume that $\nu >0$ and
\begin{align}
  A \textrm{ linear and strictly positive definite so that }  \langle \nu A U, U \rangle \geq \alpha |U|^2.
  \label{eq:pos:def:cond}
\end{align}
for every $U \in \RR^N$ (and where $\alpha > 0$ may depend on $\nu$).
$B$ is bilinear and satisfies the cancelation property
\begin{align}  
  \langle B(V, U), U \rangle = 0,
  \label{eq:can:cond}
\end{align}
valid for all $V, U \in \RR^N$.   
Regarding the structure of the stochastic
perturbation we assume an additive, time homogenous noise so that\footnote{Alternatively we may think of $\sigma$ as 
a fixed matrix in $\RR^{d \times N}$.}
\begin{align}
  \sigma_1, \ldots, \sigma_d  \textrm{ are fixed elements in } \RR^N.
  \label{eq:noise:structure}
\end{align}
Finally we will suppose that
\begin{align}
  F \textrm{ and } \sigma_1, \ldots, \sigma_d \textrm{ satisfy the H\"ormander bracket condition }
  \label{eq:Hormander:cond:asmpt}
\end{align}
where $F(U) = \nu A U + B(U,U)$.  We will recall a precise statement of this final
condition, \eqref{eq:Hormander:cond:asmpt} below in Definition~\ref{def:hormander}.

The following theorem concerning \eqref{eq:fin:dim:model} will be the focus of our attention below\footnote{See Section~\ref{sec:markov}
for a recall of precise definition of invariant measures.  Morally speaking Theorem~\ref{thm:unique:ergodic} asserts that \eqref{eq:fin:dim:model} has
a unique statistically steady state and that this distribution represents the time asymptotic statistical behavior of all solutions.}
\begin{Thm}\label{thm:unique:ergodic}
  There exists a unique ergodic invariant measure $\mu$ for \eqref{eq:fin:dim:model} whenever the 
  conditions \eqref{eq:pos:def:cond}--\eqref{eq:Hormander:cond:asmpt} are satisfied.  Under these circumstances $\mu$ 
  is also mixing and satisfies a law of large numbers.
\end{Thm}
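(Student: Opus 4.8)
The plan is to establish existence and uniqueness of an ergodic invariant measure by the classical "Krylov–Bogoliubov plus asymptotic strong Feller plus irreducibility" strategy, which is the route laid out in \cite{HairerMattingly06, HairerMattingly2011}. First I would show that the Markov semigroup $P_t$ associated to \eqref{eq:fin:dim:model} is well defined on $C_b(\RR^N)$ and Feller; the key input is a global-in-time a priori bound on solutions, obtained by applying Itô's formula to $|U|^2$ and using the cancelation property \eqref{eq:can:cond} (so the nonlinearity drops out of the energy balance) together with the coercivity \eqref{eq:pos:def:cond}. This yields $\frac{d}{dt}\E|U|^2 \le -2\alpha\,\E|U|^2 + \mathrm{tr}(\sigma\sigma^*)$ and hence a uniform-in-time moment bound, from which tightness of the time-averaged measures $\frac1T\int_0^T P_t^*\delta_{U_0}\,dt$ follows; Krylov–Bogoliubov then gives at least one invariant measure $\mu$.

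For uniqueness I would combine two ingredients. The first is \emph{irreducibility}: using the Hörmander bracket condition \eqref{eq:Hormander:cond:asmpt} and the Stroock–Varadhan support theorem (or an explicit control argument), the law of $U_t$ has support all of $\RR^N$ for every $t>0$ and every starting point, so any two invariant measures have overlapping support. The second, and the technical heart of the argument, is a \emph{gradient estimate} on $P_t$ of asymptotic strong Feller type: one must show that $|\nabla P_t \phi(U_0)| \le C(|U_0|)\big(\|\phi\|_\infty + \delta(t)\|\nabla\phi\|_\infty\big)$ with $\delta(t)\to 0$. This is proved via the Malliavin calculus / infinitesimal-perturbation approach of Hairer–Mattingly: one differentiates the flow with respect to the initial condition, introduces a control (variation of the Wiener path) chosen to steer the linearized dynamics, and uses the Hörmander condition to guarantee the relevant reduced Malliavin matrix is invertible (with quantitative control) so that the control can be built. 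A standard argument (e.g.\ \cite[Corollary 3.17]{HairerMattingly06}) then upgrades irreducibility plus asymptotic strong Feller to uniqueness of $\mu$, and ergodicity of $\mu$ is automatic once uniqueness holds.

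Finally, \emph{mixing} ($P_t^*\nu \to \mu$ weakly for every initial $\nu$) follows from uniqueness together with the same asymptotic strong Feller and irreducibility properties, via the convergence-to-equilibrium consequences of these hypotheses; one may also, if a spectral gap / Lyapunov–Harris structure is available from the energy estimate above, obtain convergence in a weighted total variation or Wasserstein distance. The \emph{law of large numbers} for time averages, $\frac1T\int_0^T \phi(U_t)\,dt \to \int \phi\,d\mu$ almost surely, follows from uniqueness of the invariant measure by the standard ergodic theorem for Markov processes (Birkhoff applied to the stationary version, then transferred to arbitrary initial data using the mixing/attraction property).

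I expect the main obstacle to be the gradient estimate: verifying the Hörmander bracket condition concretely for $F(U)=\nu AU + B(U,U)$ and the fixed directions $\sigma_1,\dots,\sigma_d$, and then turning the resulting hypoellipticity into the quantitative invertibility of the Malliavin matrix needed to construct the steering control with decaying cost $\delta(t)$. In finite dimensions this is more tractable than the PDE case, but the bracket computations and the attendant bounds on the Malliavin matrix (uniform over the relevant region of phase space, with the right dependence on $|U_0|$) are where the real work lies.
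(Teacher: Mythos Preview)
Your overall architecture is correct and would yield the theorem, but it differs from the paper's actual route in two places worth noting.

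First, on the smoothing side: you aim for the \emph{asymptotic} strong Feller estimate $|\nabla P_t\phi| \le C(\|\phi\|_\infty + \delta(t)\|\nabla\phi\|_\infty)$, whereas the paper, exploiting that we are in finite dimensions, proves the genuine \emph{strong Feller} bound $\|\nabla P_t\phi(U_0)\| \le C(|U_0|,t)\|\phi\|_\infty$ (Proposition~\ref{prop:strong:feller:cond}). This is obtained by exactly solving the control problem $\JJ_{0,t}\xi = \AAA_{0,t}v(\xi)$ via $v = \AAA_{0,t}^*\MM_{0,t}^{-1}\JJ_{0,t}\xi$, which is available here because the Malliavin matrix is a.s.\ invertible with all negative moments (Proposition~\ref{prop:main:spec:bound}). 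Your ASF route is the one that generalizes to the SPDE setting (and the paper flags this in Remarks~\ref{rmk:ASF} and~\ref{rmk:ASF:grad:est}), but in the present finite-dimensional model it is more work than necessary and slightly obscures why the finite-dimensional case is cleaner.

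Second, on irreducibility: you invoke Stroock--Varadhan/control to get full topological irreducibility of the transition probabilities. The paper instead proves only \emph{weak irreducibility}, namely that the single point $0$ lies in the support of every invariant measure (Lemma~\ref{lem:reachable:zero:state} and the proposition following it). The argument uses nothing about H\"ormander brackets: it relies only on the dissipative structure \eqref{eq:pos:def:cond}--\eqref{eq:can:cond} to show that on the event $\{\sup_{s\le t}|\sigma W(s)|<\gamma\}$ the shifted process $\bar U = U - \sigma W$ contracts toward $0$. This is simpler and more robust (it survives in infinite dimensions without geometric control theory), and together with strong Feller it is already enough for the Doob--Khasminskii argument (Theorem~\ref{thm:basis:for:uniqueness}).

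In short: your plan is the infinite-dimensional template of \cite{HairerMattingly06, HairerMattingly2011} applied verbatim; the paper deliberately takes the sharper finite-dimensional shortcut (true strong Feller, weak irreducibility at $0$) while structuring the Malliavin estimates so that the reader can see how they would adapt to the SPDE case.
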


\begin{Rmk}
Our model \eqref{eq:fin:dim:model} under the assumptions
\eqref{eq:pos:def:cond}--\eqref{eq:can:cond} cover a variety of 
truncations (discretizations) of the 2 and 3 dimensional Navier-Stokes equations 
as well as a number of other related systems of physical interest.
A very similar class of models has previously been employed as a starting point 
for the study of stochastic fluid flow problems in \cite{AlbeverioFlandoliSinai2008}.  
\end{Rmk}

\begin{Rmk}
We may say that the assumption \eqref{eq:pos:def:cond}--\eqref{eq:can:cond}
are \emph{generic}; alone they do not impose any special structure on the 
way energy is injected into the system or on how these stochastic terms interact with 
the nonlinear terms in $F$.    We will thus derive a number of basic properties
(well-posedness, moment bounds, existence of stationary states) which would be 
expected to hold in a wide variety of settings.  It is in analyzing the uniqueness and attraction properties 
of stationary states that we require a much more detailed understanding of the interaction 
of the nonlinear and stochastic terms as embodied in \eqref{eq:Hormander:cond:asmpt}.
See sections~\ref{sec:our:fav:hor}--\ref{sec:analysis:M} below.
\end{Rmk}

\subsection{Well Posedness}

As usual the starting point is to show that \eqref{eq:fin:dim:model} is a well-posed problem.  Under \eqref{eq:pos:def:cond}--\eqref{eq:can:cond}  we have the following proposition.
\begin{Prop}\label{thm:well:posedness}
Fix a stochastic basis $\mathcal{S} = (\Omega, \mathcal{F}, \Prb, \{\mathcal{F}_t\}_{t \geq 0},
W)$, that is a filtered, right continuous and complete probability space
with $W= ( W_1, \ldots, W_d)$ a $d$-dimensional standard Wiener process relative to this filtration.  
For every $U_0 \in \RR^N$, there exists
a unique $U( \cdot ,U_0): \Omega \times [0, \infty) \to \RR^N$ such that
\begin{align}
  U( \cdot ,U_0) \in C([0, \infty); \RR^N)  \textrm{ a.s. and is } \mathcal{F}_t-\textrm{adapted}
  \label{eq:reg:prop:1}
\end{align} 
and satisfies 
\begin{align}
  U(t) + \int_0^t( \nu AU + B(U,U)) ds = U_0 + \sum_k^d \sigma_k W^k(t)
  \label{eq:int:eq:sol}
\end{align}
for every $t \geq 0$.  Moreover solutions depend continuously on initial conditions, namely,
whenever $U_0^k \to U_0$
\begin{align}
   U(t, U_0^k) \to U(t, U_0),
   \label{eq:cont:dependence}
\end{align}
almost surely for every $t \geq 0$.
\end{Prop}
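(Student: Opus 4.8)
The plan is to establish well-posedness by a standard transformation that removes the stochastic term, reducing \eqref{eq:fin:dim:model} to a random ODE for which classical Picard--Lindel\"of and a priori bounds apply. First I would set $Z(t) = \sum_{k=1}^d \sigma_k W^k(t)$, which is an $\mathbb{R}^N$-valued continuous, $\mathcal{F}_t$-adapted process, and look for a solution of the form $U = V + Z$. Substituting into \eqref{eq:int:eq:sol}, the unknown $V$ must satisfy the (pathwise, random) integral equation
\begin{align}
  V(t) + \int_0^t \bigl( \nu A (V+Z) + B(V+Z, V+Z) \bigr)\, ds = U_0,
  \label{eq:V:eq}
\end{align}
equivalently $\dot V = -\nu A(V+Z) - B(V+Z,V+Z)$ with $V(0) = U_0$. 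For each fixed $\omega$, the right-hand side is continuous in $t$ (through $Z(\cdot,\omega)$) and locally Lipschitz in $V$ (it is an affine-plus-quadratic polynomial in $V$ with continuous coefficients), so the Cauchy--Lipschitz theorem yields a unique maximal solution $V(\cdot,\omega) \in C([0,T_{\max}(\omega)); \mathbb{R}^N)$. Measurability/adaptedness of $V$, and hence of $U = V + Z$, follows from the measurable dependence of the Picard iterates on $(\omega,t)$ together with uniqueness; this is routine and I would only remark on it.

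The substantive step is the global-in-time a priori bound showing $T_{\max}(\omega) = +\infty$. Here I would work directly with $|U|^2$ via the It\^o formula (or equivalently with $|V|^2$ pathwise). Applying It\^o to $|U(t)|^2$ and using the cancellation property \eqref{eq:can:cond} to kill the nonlinear term, $\langle B(U,U), U \rangle = 0$, together with the coercivity \eqref{eq:pos:def:cond}, $\langle \nu A U, U\rangle \geq \alpha |U|^2$, gives
\begin{align}
  d|U|^2 + 2\alpha |U|^2\, dt \leq d|U|^2 + 2\langle \nu A U, U\rangle\, dt
    = \Bigl(\sum_{k=1}^d |\sigma_k|^2\Bigr) dt + 2 \langle U, \sigma\, dW\rangle .
  \label{eq:energy:ineq}
\end{align}
Taking expectations kills the martingale term and Gr\"onwall yields $\mathbb{E}|U(t)|^2 \leq e^{-2\alpha t}|U_0|^2 + \frac{1}{2\alpha}\sum_k |\sigma_k|^2$, while the pathwise version (using that $|Z(\cdot,\omega)|$ is locally bounded) shows $|V(\cdot,\omega)|$ cannot blow up in finite time; hence no finite-time explosion and $U \in C([0,\infty);\mathbb{R}^N)$ a.s. The main obstacle, such as it is, is purely bookkeeping: one must be careful that the cancellation is genuinely available, i.e. that the It\^o correction and the $\langle \nu AU, U\rangle$ term are the only surviving contributions after using \eqref{eq:can:cond}, and that one has the correct quadratic-variation term $\sum_k |\sigma_k|^2$; once the energy estimate \eqref{eq:energy:ineq} is in hand, global existence is immediate.

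For the continuous dependence \eqref{eq:cont:dependence}, fix $\omega$ (outside a null set) and $T>0$, let $U_0^k \to U_0$, and set $D^k = U(\cdot,U_0^k) - U(\cdot,U_0) = V(\cdot,U_0^k) - V(\cdot,U_0)$ (the $Z$'s cancel). Subtracting the two copies of \eqref{eq:V:eq} and differentiating, $\dot D^k = -\nu A D^k - \bigl(B(U^k, U^k) - B(U,U)\bigr)$; since $B$ is bilinear, $B(U^k,U^k) - B(U,U) = B(D^k, U^k) + B(U, D^k)$, so pairing with $D^k$ and again using \eqref{eq:can:cond} on the term $\langle B(D^k, U^k), D^k\rangle = 0$ leaves only $\langle B(U, D^k), D^k \rangle$, which is bounded by $C\,|U(t,\omega)| \, |D^k(t)|^2$ with $|U(\cdot,\omega)|$ bounded on $[0,T]$ by the a priori bound. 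Thus $\frac{d}{dt}|D^k|^2 \leq C_{T,\omega} |D^k|^2$ with $|D^k(0)| = |U_0^k - U_0| \to 0$, and Gr\"onwall gives $\sup_{[0,T]} |D^k(t)| \to 0$. This delivers \eqref{eq:cont:dependence} (in fact locally uniformly on $[0,T]$, more than stated), completing the proof.
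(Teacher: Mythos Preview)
Your argument is correct in outline but takes a different route from the paper. The paper establishes existence by truncating the nonlinearity with a smooth cutoff $\theta_R(|U|^2)B(U,U)$ to obtain a globally Lipschitz drift, invokes a Banach fixed point in $L^2(\Omega; C([0,T];\RR^N))$, and then removes the truncation via stopping times $\tau_R$ together with the It\^o energy inequality to rule out blow-up. You instead exploit the additive structure of the noise: subtracting $Z = \sigma W$ reduces the problem to a random ODE for $V$, to which classical Picard--Lindel\"of applies pathwise, and global existence follows from a pathwise energy bound on $V$. Both are standard; your approach is arguably more elementary here and has the virtue of making adaptedness and continuous dependence entirely deterministic once $\omega$ is frozen, while the paper's truncation-plus-stopping-time argument sits more naturally inside SDE theory and would transfer with less modification to multiplicative noise.

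One small slip in your continuous-dependence step: with your decomposition $B(U^k,U^k)-B(U,U)=B(D^k,U^k)+B(U,D^k)$, the cancellation \eqref{eq:can:cond} (which reads $\langle B(V,U),U\rangle=0$) kills $\langle B(U,D^k),D^k\rangle$, not $\langle B(D^k,U^k),D^k\rangle$ as you wrote. The surviving term is therefore $\langle B(D^k,U^k),D^k\rangle$, bounded by $C|U^k(t,\omega)|\,|D^k|^2$; since the $U_0^k$ are convergent and hence bounded, your a priori estimate gives a uniform-in-$k$ bound on $\sup_{[0,T]}|U^k(\cdot,\omega)|$, and the Gr\"onwall argument goes through unchanged. (Alternatively, decompose as $B(U^k,D^k)+B(D^k,U)$ and the term you actually want does vanish, leaving the bound in terms of $|U|$ that you stated.)
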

\begin{proof}
Uniqueness and continuous dependence of solutions on initial 
conditions, \eqref{eq:cont:dependence} is established with a standard Gr\"onwall argument. 

To prove existence of solutions we begin by truncating \eqref{eq:fin:dim:model}
as follows.  Take
\begin{align*}
  dU^R + (\nu AU^R + \theta_R(| U|^2) B(U^R,U^R)) dt = \sum_{k=1}^d \sigma_k dW^k,
\end{align*}
where $\theta_R : \RR \to [0,1]$ is a smooth cut-off function which is identity on $[0, R]$ and zero
on $[R+1, \infty)$.  Observe that this truncated system has globally Lipshitz 
drift terms.  We therefore obtain the global existence of solutions using Banach
fixed point arguments on the space $L^2(\Omega, C([0,T]; \RR^N))$; see e.g. \cite{Oksendal2003}.  Now define
\begin{align*}
  \tau_R  := \inf_{t  \geq 0} \{ |U^R(t)|^2 \geq R\}.
\end{align*}
Up to this stopping time $U^R$ solves \eqref{eq:fin:dim:model}.  By uniqueness
we therefore obtain the existence of solutions up to a possible blow up time $\tau_\infty$.
To see that $\Prb(\tau_\infty < \infty) = 0$ we apply the It\={o} formula and obtain that
\begin{align*}
  d |U|^2 + 2\alpha |U|^2 dt \leq |\sigma|^2 dt + 2\langle U, \sigma\rangle dW
\end{align*}
so that for any $T > 0$, $R > 0$,
\begin{align*}
  \E \left(\sup_{t \in [0, \tau_R \wedge T]} |U(t)|^2\right) \leq C |\sigma|^2 T,
\end{align*}
where the constant $C$ is independent of $R > 0$.
Hence, by the monotone convergence theorem, we conclude that $\tau_\infty  > T$ a.s., completing the proof.

\end{proof}

\section{The Markovian Framework}
\label{sec:markov}
We turn now to recall the Markovian setting for \eqref{eq:fin:dim:model}.  This framework illuminates a deep connection between
SDEs and PDEs and provides the appropriate set-up for us to investigate statistically steady states.  

Take
\begin{align*}
  P_t(U_0, A) = \Prb( U(t, U_0) \in A), \quad t \geq 0, A \in \mathcal{B}(\RR^n)
\end{align*}
where $\mathcal{B}(\RR^N)$ are the Borelian subsets of $\RR^N$.  The \emph{Markov semigroup}
is given by
\begin{align}
  P_t \phi(U_0) = \int_{\RR^N} \phi(U) P_t(U_0, dU) = \E \phi(U(t,U_0))
\end{align}
which acts on $\mathcal{M}_b(\RR^N)$, measurable bounded functions on $\RR^N$.
Intuitively $\{P_t\}_{t \geq 0}$ acts by as evolving forward observable quantities according to the model \eqref{eq:fin:dim:model}
and then averaging the resulting values over many independent realizations.

In view of \eqref{eq:cont:dependence} in Theorem~\ref{thm:well:posedness} we obtain
important continuity properties for $\{P_t\}_{t \geq 0}$.  For example it is easy to see that 
$P_t$ is \emph{Feller}, i.e.
\begin{align*}
  P_t : C_b(\RR^N) \to C_b(\RR^N)
\end{align*}
where $C_b(\RR^N)$ consists of continuous bounded functions and is
stochastically continuous meaning that
\begin{align}
  \lim_{t \to 0} P_t(U_0, B_\delta(U_0)^c) = 0
  \label{eq:stoch:cont}
\end{align}
for any $U_0 \in \RR^N$ and any fixed $\delta > 0$.

To see that $\{P_t\}_{t \geq 0}$ is indeed a semigroup of operators on $C_b(\RR^N)$
we note that solutions $U(\cdot, U_0)$ of \eqref{eq:fin:dim:model} satisfy the Markov property 
namely
\begin{align*}
  \E( \phi(U(t + s, U_0)) | \mathcal{F}_s) = P_t \phi(U(s, U_0))
\end{align*}
almost surely for all $0 < s< t$ and $\phi$ in $C_b(\RR^N)$.\footnote{A rigorous proof of this identity in our setting can be found in \cite{AlbeverioFlandoliSinai2008}.}
Furthermore, $\{P_t\}_{t \geq 0}$ is 
the solution semigroup of a linear partial differential equation
which can be written down explicitly.  Taking $\psi(t,U) = P_t\phi(U)$ it is not hard to show
using the It\={o} lemma that $\psi$ satisfies
\begin{align}
  \partial_t \psi  = L \psi; \quad \psi(0, U) = \phi(U)
  \label{eq:Kols:eqn:1}
\end{align}
where the generator $L$ is given by\footnote{Recall that we view  $\sigma = (\sigma_1, \ldots, \sigma_d)$ as a matrix in $\RR^{N \times d}$.}
\begin{align}
  L \phi = - \langle \nu A U + B(U,U), \nabla \psi \rangle + \frac{1}{2} \mbox{tr} (\sigma \sigma^* \nabla^2 \psi).
     \label{eq:inf:generator}
\end{align}
This system is called the \emph{Kolmogorov (forward) equation}.

The dual semigroup of $\{ P_t \}_{t \geq 0}$ acts on Borel probability measures $\mu \in Pr(\RR^N)$ according to
\begin{align*}
  P_t^* \mu(A) = \int_{\RR^N}  P_t(U_0, A) d\mu(U_0), \quad A \in \mathcal{B}(\RR^N)
\end{align*}
that is $\mu_t = P_t^*\mu$ is the probability law of solution starting from an
initial condition distributed according $\mu$.  When $\mu_t(A) = \int_A p_t(x) dx$ where $p_t \in C(\RR^N)$\footnote{In other words we are 
referring to case when $\mu$ is absolutely continuously with respect to Lebesgue measure; in more elementary 
terms this is case where solutions of \eqref{eq:fin:dim:model} are distributed as continuous random variables.}
then $p_t$ satisfies the Fokker-Plank equation
\begin{align}
  \partial_t p_t = L^* p_t; \quad p_0 \sim^d \mu.
  \label{eq:Kols:eqn:2}
\end{align}
An \emph{invariant measure} for the Markov semigroup is an element $\mu$
such that $P_t^* \mu = \mu$ for all $t \geq0$.  Such elements are (possibly generalized)
solutions of 
\begin{align}
  L^* p^* = 0
  \label{eq:inv:measure:prob:pde}
\end{align} 
and represent statistically steady states of \eqref{eq:fin:dim:model}.  Namely if
$\mu$ is an invariant measure and $U_0$ is distributed according to $\mu$
(i.e. $\mu(A) = \Prb(U_0 \in A)$, for all $A \in \mathcal{B}(\RR^N)$) then
$U(t, U_0)$ maintains this distribution $\mu$ at all later times $t > 0$.

The question of the existence of invariant measures is relatively straightforward.  It is the uniqueness
and attraction properties of these statistical states that is a subtile issue which depends heavily on 
the nonlinear structure of \eqref{eq:fin:dim:model}.  Having shown that $\mu$ is unique we infer that it is ergodic.    This means
that 
\begin{align}
  \lim_{T \to \infty} \frac{1}{T}\int_0^{T} \E \phi(U(t,U_0)) dt = \int_{\RR^N} \phi(U) d\mu(U)
  \label{eq:mu:is:ergodic}
\end{align}
which holds for every $\phi \in L^2(\RR^N, \mu)$ and $\mu$-almost every $U_0 \in \RR^N$.  One might also hope
to establish stronger attraction properties for $\mu$.    For example we say that $\mu$ is \emph{mixing} if
\begin{align}
  \lim_{T \to \infty} \E \phi(U(T,U_0)) = \int_{\RR^N} \phi(U) d\mu(U)
    \label{eq:mu:is:mixing}
\end{align}
for every $\phi \in C(\RR^N)$ and any $U_0 \in \RR^N$.  On the other hand, $\mu$ satisfies a \emph{law of large numbers}
if 
\begin{align}
  \lim_{T \to \infty} \frac{1}{T}\int_0^{T} \phi(U(t,U_0)) dt = \int_{\RR^N} \phi(U) d\mu(U)
      \label{eq:LLN}
\end{align}
almost surely for $\phi \in C(\RR^N)$ and any $U_0 \in \RR^N$.

It is worth noting that \eqref{eq:mu:is:ergodic}--\eqref{eq:LLN} are rigorous statements about 
measurement.  Under the presumption that \eqref{eq:fin:dim:model} is an accurate model
for turbulent flow (in some particular physical setting) we might suppose that $\mu$ 
uniquely characterizes the statistics of developed turbulence.   As such \eqref{eq:mu:is:ergodic}--\eqref{eq:LLN}
tell us that one may `observe' $\mu$ through physical or numerical experiments.  For example we may take
$\phi(U) = |U|^2$ or $\phi(U) = \langle U, e_k \rangle$.\footnote{In infinite dimensions these questions of measurement
are somewhat more complicated as we must address the regularity properties of observables more carefully.  Nevertheless,
similar statements can be proven.}

In view of \eqref{eq:Kols:eqn:1}--\eqref{eq:Kols:eqn:2} it is possible approach the study of statistical
properties for \eqref{eq:fin:dim:model} with the methods of (deterministic) partial differential equations.   See e.g.  \cite{Risken1989, Stroock2012}.
In particular since we are mainly interested in understanding \eqref{eq:fin:dim:model} the degenerate case where the number
of directly excited directions $d$ is much smaller than the dimension of the phase space ($d << N$) 
one would expect that the analysis of \eqref{eq:Kols:eqn:1}, \eqref{eq:Kols:eqn:2} to involve the hypoelliptic
theory of H\"ormander \cite{Hormander1967} making use of the methods of pseudodifferential calculus.  

We will take a more probabilistic approach in our analysis of invariant measures for $\{P_t\}_{t  \geq0}$ here.  This probabilistic view  
is in particular motivated by the fact that the tools that we develop been have been successfully adapted to an infinite 
dimensional setting appropriate for stochastic partial differential equations.

\section{Moment Estimates and the Existence of Stationary States}

Some basic properties of solutions of \eqref{eq:fin:dim:model} may be derived 
under \eqref{eq:pos:def:cond}--\eqref{eq:can:cond} without any further assumptions.
In particular the existence of statistically invariant states follow from basic energy balance considerations.

By applying the It\={o} formula to \eqref{eq:fin:dim:model} we have that
\begin{align*}
  d|U| + 2\langle \nu AU + B(U,U), U \rangle = |\sigma|^2  + 2\langle \sigma, U \rangle dW
\end{align*}
With \eqref{eq:pos:def:cond}, \eqref{eq:can:cond} we thus infer
\begin{align}
   |U(t)|^2 +   2 \alpha \int_0^T|U|^2 dt \leq& 
   |U_0|^2 +  |\sigma|^2T  + 2\sum_{k =1}^d \int_0^T\langle \sigma_k, U \rangle dW^k.
   \label{eq:energy:est:basic}
\end{align}
Consider the time averaged measures
\begin{align*}
   \mu_T(A) =  \frac{1}{T} \int_0^T \Prb( U(t) \in A) dt.
\end{align*}
Using the Markov inequality and \eqref{eq:energy:est:basic} we estimate
\begin{align*}
  \mu_T(B(0,R)) =& 1-  \frac{1}{T} \int_0^T \Prb( |U(t)|^2 > R^2) dt
     \geq 1 - \frac{1}{T R^2} \int_0^T \E |U(t)|^2 dt\\
     \geq& 1 - \frac{|\sigma|^2 + |U_0|^2 T^{-1}}{2 \alpha R^2}.
\end{align*}
This shows that $\{\mu_T\}_{T \geq 0}$ is tight and hence a weakly
compact sequence.  Any sub-sequential limit of this sequence 
is an invariant measure for \eqref{eq:fin:dim:model}.  To see this
observe that for any $\mu = \lim_{T_j \to \infty} \mu_{T_j}$
\begin{align*}
 \langle P_t^* \mu, \phi \rangle 
  &= \lim_{T_j \to \infty} \langle  \mu_{T_j}, P_{t}\phi \rangle 
  = \lim_{T_j \to \infty} \frac{1}{T_j} \int_0^{T_j} P_{t + s} \phi(U_0) ds
  =\lim_{T_j \to \infty} \frac{1}{T_j} \int_t^{T_j+ t} P_{t} \phi(U_0) ds\\
  &=\lim_{T_j \to \infty}  \left(\langle  \mu_{T_j}, \phi \rangle  + \frac{1}{T_j} \int_{T_j}^{T_j+ t} P_{s} \phi(U_0) ds
  -\frac{1}{T_j} \int_{0}^{t} P_{s} \phi(U_0) ds\right) = \langle \mu, \phi \rangle.
\end{align*}
Returning again to the energy balance \eqref{eq:energy:est:basic} 
one may show that family of invariant measures $\mathcal{I}$ is tight
and is hence weakly compact collection.  

\subsection{Exponential Martingales}
We can use \eqref{eq:energy:est:basic} to derive additional exponential moment estimates
which will play a crucial role below.  For this we recall the following useful martingale bound
\begin{Prop}
  Suppose that $\{M_t\}_{t \geq 0}$ is a square integrable, mean zero, martingale (relative
  to some fixed stochastic basis $\mathcal{S} = ( \Omega, \mathcal{F}, \Prb, \{\mathcal{F}_t\}_{t \geq 0}, \Prb)$) 
  and let $\langle M \rangle_t$ be the associated quadratic variation process.\footnote{Recall
  that the quadratic variation is given by
  \begin{align*}
    \langle M \rangle_t := \lim_{|\Pi_N| \to 0} \sum_{k= 0}^N |M_{t^N_k} - M_{t^N_{k-1}}|^2
  \end{align*}
  where $\Pi^N$ is any sequence of partitions of $[0,t]$ whose maximum interval goes to zero as $N \to \infty$.  By the Martingale
  representation theorem $M_t = \int_0^t f dW$ for some adapted, square integrable $f$.  One may therefore
  infer that $\langle M \rangle_t = \int_0^t |f|^2 ds$;  see \cite{KaratzasShreve} for further details.
  }
  Then 
  \begin{align}
     \Prb\left( \sup_{t \geq 0} (M_t - \frac{\gamma}{2} \langle M \rangle_t) \geq K\right) \leq e^{-\gamma K},
     \label{eq:important:MG:est}
  \end{align}
  for any $\gamma, K > 0$.
\end{Prop}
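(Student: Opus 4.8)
The plan is to realize the quantity $M_t - \tfrac{\gamma}{2}\langle M\rangle_t$ inside an exponential and exploit the supermartingale property of the Dol\'eans--Dade exponential. Concretely, I would set
\[
  Z_t := \exp\!\left(\gamma M_t - \frac{\gamma^2}{2}\langle M\rangle_t\right),
\]
and observe that, since $\gamma M_t - \tfrac{\gamma^2}{2}\langle M\rangle_t = \gamma\bigl(M_t - \tfrac{\gamma}{2}\langle M\rangle_t\bigr)$, the event appearing in \eqref{eq:important:MG:est} is exactly $\{\sup_{t\geq0} Z_t \geq e^{\gamma K}\}$. Hence the assertion reduces to the maximal inequality $\Prb(\sup_{t\geq0} Z_t \geq \lambda) \leq \E Z_0/\lambda$ with $\lambda = e^{\gamma K}$, provided one shows that $Z$ is a nonnegative supermartingale with $Z_0 = 1$.

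The identity $Z_0 = 1$ is immediate once we recall, via the martingale representation noted in the footnote, that $M_t = \int_0^t f\,dW$ for some adapted square-integrable $f$; in particular $M_0 = 0$, the paths of $M$ are continuous, and $\langle M\rangle_t = \int_0^t |f|^2\,ds$. A short It\^o computation then shows $Z$ is a nonnegative local martingale: writing $Z_t = e^{X_t}$ with $X_t = \gamma M_t - \tfrac{\gamma^2}{2}\langle M\rangle_t$, one has $d\langle X\rangle_t = \gamma^2|f|^2\,dt$, so It\^o's lemma gives $dZ_t = Z_t\,dX_t + \tfrac{1}{2}Z_t\,d\langle X\rangle_t = \gamma Z_t f\,dW_t$, which has no drift term.

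The one genuine obstacle is upgrading \emph{local martingale} to \emph{supermartingale} without any a priori exponential integrability of $M$ (we are only told $M$ is square integrable, so $Z$ need not be a true martingale). To handle this I would localize with $\tau_n := \inf\{t\geq0 : |M_t|\geq n \text{ or } \langle M\rangle_t \geq n\}$. On $[0,\tau_n]$ one has $X_t \leq \gamma n$ by path-continuity, so the stopped process $Z^{\tau_n}$ is a bounded local martingale, hence a genuine (uniformly integrable) martingale, with $\E(Z_{t\wedge\tau_n}\mid\mathcal F_s) = Z_{s\wedge\tau_n}$ for $s\leq t$ and $\E Z_{t\wedge\tau_n} = 1$. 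Since $\tau_n\uparrow\infty$ almost surely and $Z$ has continuous paths, $Z_{t\wedge\tau_n}\to Z_t$ a.s., and the conditional Fatou lemma yields $\E(Z_t\mid\mathcal F_s) \leq \liminf_n \E(Z_{t\wedge\tau_n}\mid\mathcal F_s) = \liminf_n Z_{s\wedge\tau_n} = Z_s$. Thus $Z$ is a nonnegative supermartingale.

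Finally I would apply the maximal inequality for nonnegative right-continuous supermartingales: Doob's supermartingale inequality on $[0,T]$ gives $\Prb(\sup_{t\in[0,T]} Z_t \geq \lambda) \leq \E Z_0/\lambda = 1/\lambda$, and letting $T\to\infty$ through the increasing events $\{\sup_{[0,T]}Z_t\geq\lambda\}$ (using path-continuity to reduce the supremum to rational times should one wish to sidestep measurability technicalities) gives $\Prb(\sup_{t\geq0}Z_t\geq\lambda) \leq 1/\lambda$. Taking $\lambda = e^{\gamma K}$ yields \eqref{eq:important:MG:est}.
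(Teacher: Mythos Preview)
Your proof is correct and follows essentially the same approach as the paper: both introduce the exponential local martingale $Z_t = \exp(\gamma M_t - \tfrac{\gamma^2}{2}\langle M\rangle_t)$ via It\^o's lemma and the martingale representation theorem, localize, and apply Doob's maximal inequality. The only cosmetic difference is that the paper applies Doob directly to the stopped process $Z_{t\wedge\tau_n}$ and then lets $n,T\to\infty$, whereas you first pass through Fatou to obtain the supermartingale property of $Z$ itself before invoking the maximal inequality; these are equivalent ways of packaging the same argument.
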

\begin{proof}
By the martingale representation theorem, there exists an adapted, square integrable 
stochastic process $f$ such that $M_t = \int_0^t f dW$.  It is not hard to see from It\=o's lemma that 
\begin{align}
   N_t := \exp\left( \gamma \int_0^t f dW - \frac{\gamma^2}{2} \int_0^t |f|^2 ds \right)
   \label{eq:exp:martingale}
\end{align}
is a local martingale.  Using Doob's inequality we therefore estimate
\begin{align*}
  \Prb\left( \sup_{s\in [0,T\wedge \tau_n]}  (M_t - \frac{\gamma}{2} \langle M \rangle_t) \geq K\right)
  = \Prb\left( \sup_{s\in [0,T\wedge \tau_n]}  N_t \geq e^{K \gamma}\right)
 = \Prb\left( \sup_{s\in [0,T]}  N_{t\wedge \tau_n} \geq e^{K \gamma}\right) \leq \frac{1}{e^{K \gamma}}
\end{align*}
for any $T > 0$ where $\tau_n$ is the localizing sequence for \eqref{eq:exp:martingale}.  Taking
$T, n \to \infty$ completes the proof.
\end{proof}

We make use of \eqref{eq:important:MG:est} with \eqref{eq:energy:est:basic} as follows.  For
any $\gamma > 0$ we have that
\begin{align*}
   |U(t)|^2 +&   2 \alpha \int_0^t|U|^2 dt    - 2\gamma \sum_{k =1}^d \int_0^t\langle \sigma_k, U \rangle^2 dt
   - |U(0)|^2 - |\sigma|^2 t \\
   &\leq
       2\sum_{k =1}^d \int_0^t\langle \sigma_k, U \rangle dW^k
    - 2\gamma \sum_{k =1}^d \int_0^t\langle \sigma_k, U \rangle^2 dt.
\end{align*}
Therefore by taking $\gamma = \gamma(|\sigma|^2, \nu)$ sufficient small and noting
that $2\sum_{k =1}^d \int_0^T\langle \sigma_k, U \rangle dW^k$ is square integrable
martingale with quadratic variation $4\sum_{k =1}^d \int_0^T\langle \sigma_k, U \rangle^2 dt$ we obtain:
\begin{Lem}\label{lem:moments}
There exists $\gamma = \gamma(|\sigma|^2, \nu)$ such that
\begin{align}
   \Prb
   \left(  \sup_{t \geq 0} \left(|U(t)|^2 +   \alpha \int_0^t |U|^2 dt - |\sigma|^2 t \right) \geq K/2 \right)
   \leq \exp(-K \gamma),
   \label{eq:exp:martingale:lead:to:awesome:1}
\end{align}
for every $K \geq 2|U_0|^2$.  Moreover 
\begin{align}
   \E \exp\left( \eta  \sup_{s \in [0,t]} \left( |U(s)|^2 + \alpha\int_0^t |U(s)|^2 ds\right) \right) 
   \leq \exp( \eta ( |U_0|^2 + |\sigma|^2 t))
   \label{eq:exp:martingale:lead:to:awesome:2}
\end{align}
where $\eta = \eta(|\sigma|^2, \nu)$ is independent of $t > 0$.
\end{Lem}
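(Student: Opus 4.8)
The plan is to extract both estimates from the energy balance \eqref{eq:energy:est:basic} together with the exponential martingale bound \eqref{eq:important:MG:est}. First I would rewrite \eqref{eq:energy:est:basic} in differential-inequality form and move the martingale term to one side. Setting $M_t = 2\sum_{k=1}^d \int_0^t \langle \sigma_k, U\rangle\, dW^k$, its quadratic variation is $\langle M\rangle_t = 4\sum_{k=1}^d\int_0^t \langle \sigma_k, U\rangle^2\, ds$, and crucially $\langle \sigma_k, U\rangle^2 \leq |\sigma_k|^2 |U|^2$, so $\langle M\rangle_t \leq 4|\sigma|^2 \int_0^t |U|^2\, ds$. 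The point of introducing the parameter $\gamma$ is that the ``bad'' term $\tfrac{\gamma}{2}\langle M\rangle_t \leq 2\gamma |\sigma|^2 \int_0^t|U|^2\,ds$ can be absorbed into the good dissipative term $2\alpha \int_0^t |U|^2\,ds$ coming from \eqref{eq:pos:def:cond}, provided $\gamma$ is chosen small relative to $\alpha/|\sigma|^2$ (which depends only on $|\sigma|^2$ and $\nu$).

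For \eqref{eq:exp:martingale:lead:to:awesome:1}: with such a $\gamma$, from \eqref{eq:energy:est:basic} one gets the pointwise-in-$t$ bound
\begin{align*}
  |U(t)|^2 + \alpha\int_0^t |U|^2\, ds - |\sigma|^2 t \leq |U_0|^2 + \Bigl(M_t - \tfrac{\gamma}{2}\langle M\rangle_t\Bigr).
\end{align*}
Taking the supremum over $t \geq 0$ and using $|U_0|^2 \leq K/4$ (which holds since $K \geq 2|U_0|^2$), the event in \eqref{eq:exp:martingale:lead:to:awesome:1} forces $\sup_{t\geq 0}(M_t - \tfrac{\gamma}{2}\langle M\rangle_t) \geq K/4$; one then applies \eqref{eq:important:MG:est} (adjusting the constant $\gamma$ by a harmless factor of $4$, or simply stating the conclusion with the resulting constant). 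The only subtlety is that $M$ is only a priori a local martingale / square-integrable martingale, but \eqref{eq:important:MG:est} was proved by a localization argument precisely to handle that, so one may invoke it directly.

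For \eqref{eq:exp:martingale:lead:to:awesome:2}: here I would run the same absorption with a parameter $\eta$ small enough that $N_t := \exp\bigl(\eta M_t - \tfrac{\eta^2}{2}\langle M\rangle_t\bigr)$ is a genuine (supermartingale, hence) martingale with $\E N_t \leq 1$ — using that $\tfrac{\eta^2}{2}\langle M\rangle_t \leq 2\eta^2|\sigma|^2\int_0^t|U|^2\,ds$ is dominated by $\eta\alpha\int_0^t|U|^2\,ds$ for $\eta$ small depending on $|\sigma|^2,\nu$. Multiplying \eqref{eq:energy:est:basic} by $\eta$, exponentiating, and taking the supremum over $s\in[0,t]$ gives
\begin{align*}
  \sup_{s\in[0,t]}\exp\Bigl(\eta\bigl(|U(s)|^2 + \alpha\int_0^s|U|^2\,dr\bigr)\Bigr) \leq \exp\bigl(\eta(|U_0|^2 + |\sigma|^2 t)\bigr)\,\sup_{s\in[0,t]} N_s,
\end{align*}
and then taking expectations and applying Doob's $L^p$ inequality (or directly a supremum martingale bound) to $N_s$ on $[0,t]$ controls $\E\sup_{s\in[0,t]} N_s$ by an absolute constant times $\E N_t \leq 1$. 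A small technical point: to get the clean right-hand side with no extra constant, one either accepts a harmless multiplicative constant, or — as is standard — uses that for $\eta$ in the relevant small range the expected supremum of the exponential martingale is still bounded, possibly after shrinking $\eta$ once more. The main obstacle in both parts is the bookkeeping of which term absorbs which: making sure the single choice of $\gamma$ (resp. $\eta$) simultaneously (i) kills the compensator $\tfrac{\gamma}{2}\langle M\rangle$ against the dissipation and (ii) leaves enough dissipation to retain the $\alpha\int_0^t|U|^2$ term on the left-hand side. Everything else is routine application of the It\^o formula, \eqref{eq:important:MG:est}, and Doob's inequality.
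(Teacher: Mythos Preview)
Your approach is correct and matches the paper's: subtract $\tfrac{\gamma}{2}\langle M\rangle_t$ from both sides of \eqref{eq:energy:est:basic}, choose $\gamma$ small (depending on $|\sigma|^2,\nu$) so that this compensator is absorbed by the dissipative term $2\alpha\int_0^t|U|^2\,ds$ while leaving $\alpha\int_0^t|U|^2\,ds$ on the left, and then invoke the exponential martingale bound \eqref{eq:important:MG:est}. The paper only sketches \eqref{eq:exp:martingale:lead:to:awesome:1} and leaves \eqref{eq:exp:martingale:lead:to:awesome:2} implicit, so your Doob-based treatment of the second bound (with the extra shrinking of $\eta$ to control $\E\sup_s N_s$) is a reasonable way to fill that in.
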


\section{Criteria for the Uniqueness of Invariant Measures}

We have seen that the existence of an invariant measure is easily established from
energy balances and a time averaging procedure.   The analysis of uniqueness and 
convergence of these measures is a much more subtle issue as exemplified by the associated
(degenerate) elliptic problem \eqref{eq:important:MG:est}. 

From a probabilistic point of view, a starting point for proving the uniqueness of invariant 
measures and for establishing attraction properties is the   Doob-Khasminskii theorem
\cite{Doob1948}, \cite{Khasminskii1960}.\footnote{One of the breakthroughs in \cite{HairerMattingly06} is that the main conclusions
of these results still hold under much weaker conditions which are more suitable for infinite dimensional systems.  
See Remark~\ref{rmk:ASF} below.}  
Before turning to a precise statement of this result
we first recall some generalities and establish notations.  Let $(H, \rho)$ be a metric space and 
take $M(H)$, respectively $Pr(H)$, to be the set of signed, respectively probability, measures
on $(H, \mathcal{B}(H))$.   We say that a sequence $\{\mu_n\}_{n \geq 0} \subset M(H)$
converges weakly if
\begin{align}
   \int_H \phi(U) d\mu_n(U) \to    \int_H d\phi(U) \mu(U) 
   \label{eq:weak:conv}
\end{align}
for all continuous bounded $\phi: H \to \RR$.\footnote{The notations and terminology from
probability theory can cause significant confusion from the point of view of functional analysis.
We are actually identifying $M(H)$ as a subspace of $C_b(H)^*$ and thus it more accurate in the language
of functional analysis to say that \eqref{eq:weak:conv} is a weak* convergence.  Some authors therefore refer 
to \eqref{eq:weak:conv} as vague convergence to avoid this confusion.}  Recall that according to 
Prokhorov's theorem a collection of elements $\mathcal{I} \subset Pr(H)$ is weakly
compact iff for every $\epsilon > 0$ there is a compact set $K_\epsilon$ such that $\mu(K_\epsilon) \geq 1 - \epsilon$
for every $\mu \in \mathcal{I}$.
For $\mu \in M(H)$ the \emph{total variation
norm} is defined equivalently as
\begin{align*}
  \|\mu\|_{TV} 
  = \frac{1}{2} \sup_{ \phi \in C_b(H), \|\phi\|_\infty \leq 1} \left| \int \phi(U) d \mu(U)\right|
  = \sup_{\Gamma \in \mathcal{B}(H)} | \mu(F)|.
\end{align*}
It is not hard to show that $\mu_1$, $\mu_2$ are mutually singular\footnote{Recall that two probability measures $\mu_1$ and $\mu_2$ are mutually
singular if there exists $A \in \mathcal{B}(H)$ such that $\mu_1(A) = 1$ and $\mu_2(A^c) = 1$.}  if and only if
$\|\mu_1 - \mu_2\|_{TV} = 1$.

Now consider a stochastically continuous and Feller Markov semigroup $\{P_t\}_{t \geq 0}$ on $(H, \rho)$.\footnote{One may 
consult e.g. \cite{ZabczykDaPrato1996} for 
details on the general setting of Markov semigroups of metric (Polish) spaces.  However the reader 
may be contented to interpret the results given
here entirely in the concrete setting introduced above for \eqref{eq:fin:dim:model}}   We say that an invariant measure $\mu$ 
of $\{P_t\}_{t \geq 0}$ is \emph{ergodic} if whenever $\Gamma \in \mathcal{B}(H)$ is such that $P_t \indFn{\Gamma} = \indFn{\Gamma}$ 
$\mu$-almost surely for every $t \geq 0$ then $\mu(\Gamma) = 0$ or $1$.   


One may show that
\begin{itemize}
\item[(a)] The set of invariant measures of $\{P_t\}_{t \geq 0}$, $\mathcal{I}$ is convex and closed in the topology of weak convergence.  
\item[(b)] The set of ergodic invariant
measures are the extremal points of $\mathcal{I}$.
\item[(c)] Any two distinct ergodic invariant measures for $\{P_t\}_{t \geq 0}$ must be mutually singular.
\end{itemize}
See e.g. \cite{ZabczykDaPrato1996} for further details.

We now recall a form of the Doob-Khasminskii theorem convenient for our purposes as follows:
\begin{Thm}\label{thm:basis:for:uniqueness}
Suppose that $\{P_t\}_{t\geq0}$ is a Markov semigroup on
a metric space $(H, \rho)$ and assume that the set of invariant measures $\mathcal{I}$
is compact in the topology of weak convergence.  Suppose that
\begin{itemize}
\item[(i)] $P_t$ is \emph{weakly irreducible} i.e. there is a point $U_0^*$ which is in the support of every 
invariant measure.\footnote{Recall that $U_0^*$ is
in the support of a measure $\mu$ if $\mu(O_{U_0^*}) > 0$ for every open set $O_{U_0^*}$ containing $U_0^*$.}
\item[(ii)] $P_t$ is \emph{strong feller}, meaning that for some $t \geq 0$, $P_t$ maps $\mathcal{M}_b(H)$ to $\mathcal{C}_b(H)$.
\end{itemize}
Then $\mathcal{I}$ has at most one element.
\end{Thm}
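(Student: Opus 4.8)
The plan is to prove that any two invariant measures $\mu_1, \mu_2 \in \mathcal{I}$ must coincide, by combining the strong Feller property (ii) with weak irreducibility (i) via the classical Doob--Khasminskii argument. The skeleton is: strong Feller plus irreducibility forces \emph{equivalence} (mutual absolute continuity) of any two invariant measures; but by item (c) above, distinct ergodic invariant measures are mutually singular, which together with equivalence forces uniqueness of the ergodic invariant measure; finally, since $\mathcal{I}$ is compact and convex with the ergodic measures as its extreme points (items (a), (b)) and the extreme set is a singleton, $\mathcal{I}$ itself is a singleton by Krein--Milman.

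First I would reduce to the ergodic case. Suppose for contradiction $\mathcal{I}$ has two distinct elements; then since $\mathcal{I}$ is convex, compact, and nonempty, by Krein--Milman it has at least one extreme point, and if it is not a singleton it must have at least two distinct extreme points $\mu_1 \neq \mu_2$, which by (b) are ergodic and by (c) are mutually singular, so $\|\mu_1 - \mu_2\|_{TV} = 1$. The goal is then to derive a contradiction by showing $\mu_1$ and $\mu_2$ cannot be mutually singular. The key step: fix the time $t_* \geq 0$ for which $P_{t_*}$ is strong Feller. For any Borel set $\Gamma$ and any point $U_0$, write $P_{t_*}\indFn{\Gamma}(U_0) = \int_H P_{t_*/2}(U, \Gamma) P_{t_*/2}(U_0, dU)$, exhibiting $P_{t_*}\indFn{\Gamma}$ as $P_{t_*/2}$ applied to the bounded measurable function $U \mapsto P_{t_*/2}(U,\Gamma)$; hence $U_0 \mapsto P_{t_*}(U_0, \Gamma)$ is continuous in $U_0$, uniformly over all $\Gamma$ since $\|\indFn{\Gamma}\|_\infty \le 1$ — more precisely, $U_0 \mapsto P_{t_*}(U_0, \cdot)$ is continuous from $H$ into $(Pr(H), \|\cdot\|_{TV})$.

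Now use invariance and irreducibility. Let $U_0^*$ be the common point in the support of every invariant measure, guaranteed by (i). Suppose $\mu_1 \perp \mu_2$, so there is $A$ with $\mu_1(A) = 1$, $\mu_2(A^c) = 1$. By invariance $\mu_i(A) = \int_H P_{t_*}(U_0, A)\, d\mu_i(U_0)$, so $P_{t_*}(U_0, A) = 1$ for $\mu_1$-a.e. $U_0$ and $P_{t_*}(U_0, A) = 0$ for $\mu_2$-a.e. $U_0$. Since $U_0^*$ lies in the support of $\mu_1$, there is a sequence $U_n \to U_0^*$ with $P_{t_*}(U_n, A) = 1$; by the TV-continuity established above, $P_{t_*}(U_0^*, A) = 1$. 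Symmetrically, using that $U_0^*$ lies in the support of $\mu_2$, $P_{t_*}(U_0^*, A) = 0$. This is a contradiction, so $\mu_1$ and $\mu_2$ cannot be mutually singular, contradicting (c). Hence the extreme set of $\mathcal{I}$ is a singleton, and so $\mathcal{I}$ has at most one element.

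The main obstacle — and the step needing the most care — is the passage from ``$P_{t_*}\indFn{\Gamma} \in C_b(H)$ for each fixed $\Gamma$'' to ``$U_0 \mapsto P_{t_*}(U_0, \cdot)$ is continuous in total variation,'' i.e. extracting \emph{uniformity} over $\Gamma$ from the strong Feller property. The standard route is the Chapman--Kolmogorov splitting $P_{t_*} = P_{t_*/2} \circ P_{t_*/2}$ sketched above together with the semigroup/Feller properties; one must check that $U \mapsto P_{t_*/2}(U, \Gamma)$ is indeed a legitimate bounded measurable integrand (which follows since $P_{t_*/2}$ is a Markov kernel) and that $\|P_{t_*}(U_n, \cdot) - P_{t_*}(U_0^*, \cdot)\|_{TV} \to 0$ — this last point is where a short argument (e.g. via the fact that weak convergence of kernels plus the strong Feller property upgrades to TV convergence, or a direct estimate) is required. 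I would cite \cite{ZabczykDaPrato1996} for the detailed measure-theoretic bookkeeping here, and for items (a)--(c) and the Krein--Milman reduction, keeping the body of the proof focused on the irreducibility-plus-strong-Feller contradiction.
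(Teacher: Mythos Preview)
Your argument is correct, but it follows a different line from the paper's. The paper works quantitatively: for two ergodic invariant measures $\mu_1,\mu_2$ it decomposes each as $(1-m_\epsilon)\tilde\mu_j^\epsilon + m_\epsilon\mu_j^\epsilon$ with $\mu_j^\epsilon$ concentrated on $B_\epsilon(U_0^*)$, then uses invariance and the TV-continuity of $U\mapsto P_{2t}(U,\cdot)$ (citing \cite{Seidler2001,Hairer2007}) to bound $\|\mu_1-\mu_2\|_{TV}$ strictly below $1$, contradicting mutual singularity. You instead assume $\mu_1\perp\mu_2$, pick a separating set $A$, and derive the pointwise contradiction $P_{t_*}(U_0^*,A)=1=0$ by approaching $U_0^*$ through the supports. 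Your route is cleaner at the core step and you are more explicit than the paper about the Krein--Milman reduction from ``unique ergodic measure'' to ``$\mathcal I$ is a singleton''.

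One observation that actually simplifies your write-up: the ``main obstacle'' you flag --- upgrading strong Feller to TV-continuity of $U_0\mapsto P_{t_*}(U_0,\cdot)$ --- is not needed for your own argument. You only ever use continuity of the single function $U_0\mapsto P_{t_*}(U_0,A)$ for the \emph{fixed} separating set $A$, and that is exactly the strong Feller property applied to $\indFn{A}\in\mathcal M_b(H)$. So the Chapman--Kolmogorov splitting paragraph can be dropped. (This is fortunate, since your splitting $P_{t_*}=P_{t_*/2}\circ P_{t_*/2}$ applies the strong Feller property to $P_{t_*/2}$, which the hypothesis does not give you; the standard ultra-Feller argument runs through $P_{2t_*}=P_{t_*}\circ P_{t_*}$ instead --- this is precisely why the paper works with $P_{2t}$.) With the TV-continuity detour removed, your proof is short: $f:=P_{t_*}(\cdot,A)$ is continuous, $\{f<1\}$ is open with $\mu_1$-measure zero hence cannot contain $U_0^*$, and symmetrically $\{f>0\}$ cannot contain $U_0^*$, a contradiction.
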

\begin{proof}
Let $\mu_1, \mu_2$ be two ergodic invariant measures.   We will estimate their distance in 
the total variation norm so that any bound showing that $\|\mu_1 - \mu_2\|_{TV} < 1$
implies that $\mu_1 = \mu_2$.   In order to make use of the assumed continuity of $\{P_t\}_{t \geq 0}$ we
recall that $P_t$ being strong Feller implies that $U \mapsto P_{2t}(U, \cdot)$ is continuous
in the total variation norm.  See \cite{Seidler2001,Hairer2007}.

For $\epsilon > 0$
define $m_\epsilon = \min\{\mu_1(B_\epsilon(U_0^*)), \mu_2(B_\epsilon(U_0^*))\}$
where $B_\epsilon(U_0^*)$ is the $\epsilon$ ball around $U_0^*$, the point common to the support
of all elements in $\mathcal{I}$.  In
view of weak irreducibility, $m_\epsilon > 0$.  One may decompose 
\begin{align*}
  \mu_j = (1- m_\epsilon) \tilde{\mu}_j^\epsilon + m_\epsilon \mu_j^\epsilon
\end{align*}
where $\tilde{\mu}_j^\epsilon, \mu_j^\epsilon \in Pr(H)$ and $\mu_j^\epsilon(B_\epsilon(U_0^*)) = 1$
for $j = 1,2$.\footnote{Indeed take $\mu_j^{\epsilon} = \mu_j(B_\epsilon(U_0^*) \cap \cdot)/\mu_j(B_\epsilon(U_0^*))$, 
$\tilde{\mu}_j^{\epsilon} = \mu_j(B_\epsilon(U_0^*)^c \cap \cdot)/\mu_j(B_\epsilon(U_0^*)^c)$ for $j = 1,2$. 
We then write each $\mu_j$ as a convex combination of $\mu_j^\epsilon$ and $\tilde{\mu}_j^\epsilon$.  For the measure which does not give the
minimizer of $m_\epsilon$, we adjust $\tilde{\mu}_j^{\epsilon}$ by moving inward along the line connecting $\mu_j^\epsilon$,
$\tilde{\mu}_j^{\epsilon}$.}
Since $\mu_1, \mu_2$ are invariant we estimate
\begin{align}
  \|\mu_1 - \mu_2 \|_{TV} =& \| P_{2t}^* \mu_1 - P_{2t}^* \mu_2\|_{TV}
  \leq m_\epsilon \| P_{2t}^* \mu_1^\epsilon - P_{2t}^* \mu_2^\epsilon\|_{TV} + (1 - m_\epsilon) 
  \notag\\
  =& m_\epsilon \sup_{\Gamma \in B(H)} 
  \left| \int_{B_\epsilon(U_0^*)}  P_t(U, \Gamma) d \mu_1^\epsilon(U) - \int_{B_\epsilon(U_0^*)}P_t(\bar{U}, \Gamma) d \mu_1^\epsilon(\bar{U})  \right|
  + (1 - m_\epsilon) \notag\\
  =& m_\epsilon \sup_{\Gamma \in B(H)} 
  \left| \int_{B_\epsilon(U_0^*)}   \int_{B_\epsilon(U_0^*)} (P_{2t}(U, \Gamma) - P_{2t}(\bar{U}, \Gamma)) d \mu_1^\epsilon(\bar{U}) d \mu_1^\epsilon(U)  \right|
  + (1 - m_\epsilon) \notag\\
  \leq& 1 - m_\epsilon \left(1 - \sup_{U, \bar{U} \in B_\epsilon(U_0^*)} \|P_{2t} (U, \cdot) - P_{2t} (\bar{U}, \cdot) \|_{TV} \right) \notag
\end{align}
Since $P_{2t}$ is continuous in the total variation norm, by taking $\epsilon >0$ sufficiently small 
the we obtain that
\begin{align*}
   \sup_{U, \bar{U} \in B_\epsilon(U_0^*)} \|P_t(U, \cdot) - P_t(\bar{U}, \cdot) \| \leq \frac{1}{2}
\end{align*}
and infer that $\mu_1$ and $\mu_2$ cannot be mutually singular, completing the proof.
\end{proof}

\begin{Rmk}[The asymptotic strong Feller property]
\label{rmk:ASF}
 It turns out that the finite time smoothing required by the strong Feller condition is
 too stringent for a degenerate stochastic forcing in infinite dimensions.  In 
 \cite{HairerMattingly06, HairerMattingly2008, HairerMattingly2011}
 it was shown that a \emph{time asymptotic smoothing} is all that is required for the uniqueness
 (and certain desirable attraction properties) of invariant measures.  This replacement for the first condition (i) is referred to
 as the asymptotic strong Feller (ASF) property.  We refrain from recalling precise definitions here as it would
 require introducing the Kantorovich distance and other formalities.  In application this condition allows 
 for less restrictive estimates on the gradient of the Markov semigroup, see Remark~\ref{rmk:ASF:grad:est}
 below.
\end{Rmk}
\begin{Rmk}[Mixing, Laws of Large Numbers and other convergence properties]
\label{rmk:other:attractions}
The assumptions (i), (ii) of Theorem~\eqref{thm:basis:for:uniqueness} are sufficient
(or nearly sufficient) to prove other important attraction properties al la \eqref{eq:mu:is:mixing} \eqref{eq:LLN} for the 
invariant measure.  While we will not go into further details here   
we mention the classic work \cite{MeynTweedie2009} for general results for finite dimensional systems and to
\cite{HairerMattingly2008, KuksinShirikian12} where tools relevant to proving such results in 
the infinite dimensional setting are developed.
\end{Rmk}

\section{Irreducibility}

We have set up \eqref{eq:fin:dim:model} so that the weak irreducibility property required for
Theorem~\ref{thm:basis:for:uniqueness} is relatively straightforward to obtain.\footnote{For example the situation
would be much more complicated if we added a deterministic (and time independent) forcing to \eqref{eq:fin:dim:model} in directions
not perturbed by stochastic forcing.}   We will show
that $0$ is in the support of every invariant measure of \eqref{eq:fin:dim:model}.   The proof relies
on the fact that the dynamics of unforced version of \eqref{eq:fin:dim:model} collapse to the trivial state as $t \to \infty$.  

Note that, since we would typically consider $\sigma(x) W(t) = \sum_{k =1}^d\sigma_k(x) W_k(t)$ for which the $\sigma_k$'s are 
smooth in $x$, the  proof below is easily adapted to the full 2D Navier-stokes equations (and certain other infinite 
dimensional systems).   For much more refined 
results concerning the reachability of other portions
of the phase space in both finite and infinite dimensions see 
\cite{AgrachevSarychev2004, AgrachevSarychev2005, MattinglyPardoux1, AgrachevSarychev2006,AgrachevKuksinSarychevShirikyan2007}
and containing references.

We begin by proving the following lemma
\begin{Lem}\label{lem:reachable:zero:state}
For every $R> 0$ and every $\epsilon > 0$ there exists
$T_* = T_*(R,\epsilon, \alpha)$ such that 
\begin{align}
  \inf_{U_0 \in B_R} P_t(U_0, B_\epsilon) > 0,
  \label{eq:non:trivial:disp:set}
\end{align}
for every $t > T_*$ and where $B_R = \{ U: | U | \leq R\}$.
\end{Lem}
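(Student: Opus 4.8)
The plan is to reduce \eqref{eq:non:trivial:disp:set} to three facts: (i) the \emph{unforced} equation contracts all of $B_R$ into $B_{\epsilon/2}$ after a deterministic time $T_* = T_*(R,\epsilon,\alpha)$; (ii) on any fixed finite time horizon the solution of \eqref{eq:fin:dim:model} depends continuously on the driving path, with a modulus of continuity that can be taken uniform over initial data in $B_R$; and (iii) the Wiener measure assigns positive mass to every sup-norm neighbourhood of the zero path. Because the noise in \eqref{eq:fin:dim:model} is additive (so that $\int_0^s \sigma\, dW = \sigma W(s)$ holds pathwise), the solution $U(\cdot,U_0)$ is, sample path by sample path, the solution of a deterministic integral equation driven by $W$, and these three ingredients combine immediately.

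For (i), let $V(\cdot,U_0)$ solve the unforced equation $\dot V + \nu A V + B(V,V) = 0$ with $V(0) = U_0$. Then \eqref{eq:pos:def:cond} and \eqref{eq:can:cond} give
\begin{align*}
  \tfrac{1}{2}\tfrac{d}{dt}|V|^2 = -\langle \nu A V, V\rangle - \langle B(V,V), V\rangle \le -\alpha |V|^2,
\end{align*}
hence $|V(t,U_0)| \le e^{-\alpha t}|U_0|$. Consequently, choosing $T_* = T_*(R,\epsilon,\alpha)$ so that $e^{-\alpha T_*}R \le \epsilon/2$, every $U_0 \in B_R$ satisfies $|V(s,U_0)| \le R$ for all $s \ge 0$ and $|V(t,U_0)| \le \epsilon/2$ for all $t \ge T_*$.

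For (ii) and the conclusion, fix $t > T_*$ and, for a path $w \in C([0,t];\RR^d)$, let $U^w(\cdot,U_0)$ solve the integral equation $U^w(s) = U_0 - \int_0^s F(U^w)\, dr + \sigma w(s)$, so that $U(\cdot,U_0) = U^W(\cdot,U_0)$ almost surely and $V(\cdot,U_0) = U^0(\cdot,U_0)$. An energy estimate of the type leading to \eqref{eq:energy:est:basic} shows that if $\|w\|_{C([0,t];\RR^d)}$ is small enough (depending only on $R$ and $\alpha$) then $U^w(\cdot,U_0)$ stays in $B_{2R}$ on $[0,t]$ whenever $U_0 \in B_R$; feeding this a priori bound and the bilinear estimate $|B(X,Y)| \le C|X||Y|$ into a Gr\"onwall argument of the type used for \eqref{eq:cont:dependence} produces a $\delta = \delta(R,\epsilon,t,\alpha) > 0$ such that
\begin{align*}
  \|w\|_{C([0,t];\RR^d)} < \delta \quad\Longrightarrow\quad \sup_{s\in[0,t]} \bigl| U^w(s,U_0) - V(s,U_0)\bigr| < \epsilon/2
\end{align*}
for every $U_0 \in B_R$ simultaneously. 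On the event $\{\|W\|_{C([0,t];\RR^d)} < \delta\}$ we then have, for every $U_0 \in B_R$,
\begin{align*}
  |U(t,U_0)| \le |U^W(t,U_0) - V(t,U_0)| + |V(t,U_0)| < \tfrac{\epsilon}{2} + \tfrac{\epsilon}{2} = \epsilon .
\end{align*}
Since the law of $W$ on $C([0,t];\RR^d)$ is a nondegenerate Gaussian measure of full support, $p := \Prb(\|W\|_{C([0,t];\RR^d)} < \delta) > 0$ and $p$ is independent of $U_0$; therefore $\inf_{U_0 \in B_R} P_t(U_0,B_\epsilon) \ge p > 0$, which is \eqref{eq:non:trivial:disp:set}.

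The one point requiring care is the uniformity over $U_0 \in B_R$ in step (ii): one must verify that the smallness threshold for the a priori bound and the resulting Gr\"onwall constant depend on $B_R$ only through $R$ (equivalently, one can invoke compactness of $B_R$ together with joint continuity of $(w,U_0) \mapsto U^w(\cdot,U_0)$ near $\{0\} \times B_R$). Everything else --- the decay estimate of step (i) and the full-support property of Brownian motion in step (iii) --- is standard, and the $T_*$ produced above depends only on $R$, $\epsilon$ and $\alpha$, as claimed.
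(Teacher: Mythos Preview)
Your proof is correct and follows essentially the same strategy as the paper: restrict to the event where the Brownian path stays uniformly small, and show that on this event the forced solution behaves like the unforced one, which contracts to zero. The only organisational difference is that the paper carries this out in one step by writing $\bar{U} = U - \sigma W$ and deriving a single differential inequality for $|\bar{U}|^2$ directly (yielding $|U(t)|^2 \le e^{-t\alpha/2}|U_0|^2 + C(\gamma)$ on the small-noise event), whereas you separate it into the decay of the unforced solution $V$ plus a continuity-in-path comparison $|U^W - V|$; both routes rest on the same Gr\"onwall estimate and the same small-ball property of Wiener measure.
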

\begin{proof}
 For any $\gamma > 0$, $t >0$ define
 \begin{align*}
 	S_{\gamma, t} = \left\{ \sup_{s \in [0,t]} | \sigma W(s)| < \gamma \right\}
 \end{align*}
 By the elementary properties of Brownian motion (see e.g. \cite{KaratzasShreve}) we have that
 $\Prb(S_{\gamma,t}) > 0$ for any choice of $\gamma, t > 0$.   For $R, \epsilon >0$ and all $t > T_*$
 (for $T_*$ to be determined shortly) we will now show that there exists 
 $\gamma > 0$ such that $ \inf_{U_0 \in B_R} P_t(U_0, B_\epsilon) \geq \Prb(S_{\gamma,t})$.

 Let $\bar{U} = U - \sigma W$.  Then $\bar{U}$ satisfies
 \begin{align}
   \frac{d\bar{U}}{dt} + \nu A\bar{U} + B(U, \bar{U}) 
   =- B(\sigma W, \sigma W) - B(\bar{U}, \sigma W) - \nu A\sigma W; 
   \quad \bar{U}(0) = U_0.
   \label{eq:shifted:equation:bm}
 \end{align}
 Using \eqref{eq:pos:def:cond}, \eqref{eq:can:cond} we obtain the estimate
 \begin{align*}
    \frac{1}{2}\frac{d|\bar{U}|^2}{dt} + \alpha| \bar{U}|^2 \leq C_1\left((|\sigma W|^2 + |\sigma W|)| \bar{U}| + |\sigma W| | \bar{U}|^2 \right)
 \end{align*}
 for a constant $C_1$ depending on $A$, $B$ and $\nu$ but independent of $t \geq 0$.  Rearranging we obtain that
 \begin{align*}
    \frac{d|\bar{U}|^2}{dt} + (\alpha - |\sigma W|) | \bar{U}|^2 \leq C_2\left((|\sigma W|^2 + |\sigma W|\right)^2.
 \end{align*}
 We now specify that $\gamma \leq \alpha/2$.  For such $\gamma$ on $S_{\gamma, t}$ we have with the Gr\"onwall lemma that
 \begin{align*}
    |U(t)|^2 \leq e^{-t (\alpha/2) }|U_0|^2 + \frac{2C_2(\gamma^2 + \gamma)^2}{\alpha}.
 \end{align*}
 Recalling that $P_t(U_0, B_\epsilon) = \Prb( |U(t,U_0)| \leq \epsilon)$ we now obtain 
 \eqref{eq:non:trivial:disp:set} by further shrinking $\gamma$ and taking $t$ sufficiently
 large as a function of $R$.  The proof is thus complete.
\end{proof} 

With Lemma~\ref{lem:reachable:zero:state} in hand we now establish that \eqref{eq:fin:dim:model}
is weakly irreducible as follows.  Fix any invariant measure $\mu$ of $\{P_t\}_{t \geq 0}$ and any 
$\epsilon > 0$.  Pick $R > 0$ such that $\mu(B_R) \geq 1/2$.    Invoking Lemma~\ref{lem:reachable:zero:state}
we choose $T_* = T_*(R, \epsilon/2)$ so that $\inf_{U \in B_R} P_{T_*}(U, B_{\epsilon/2}) > 0$. 
Now take $\phi_\epsilon: \RR^N \to [0,1]$, smooth such that
\begin{align*}
  \phi_\epsilon(U) = 
  \begin{cases}
	1 & \textrm{ when } U \in B_{\epsilon/2}, \\
	0 & \textrm{ when } U \in B_{\epsilon}^c.
  \end{cases}
\end{align*}
Observe that, by the invariance of $\mu$
\begin{align*}
  \mu(B_\epsilon) 
  	\geq& \int \phi_\epsilon(U) d \mu(U)
	= \int P_{T_*} \phi_\epsilon(U) d \mu(U)
	\geq \int_{B_R} P_{T_*} \phi_\epsilon(U) d \mu(U)\\
	\geq& \int_{B_R} P_{T_*}(U, B_{\epsilon/2}) d \mu(U)
	\geq \inf_{U \in B_R}P_{T_*}(U, B_{\epsilon/2}) \mu(B_R).
\end{align*}
In conclusion this proves:  
\begin{Prop}
  Consider \eqref{eq:fin:dim:model} under \eqref{eq:pos:def:cond}, \eqref{eq:can:cond}, \eqref{eq:noise:structure}.
  Then the trivial solution is in the support of every invariant measure of \eqref{eq:fin:dim:model}.
  In particular $\{P_t\}_{t \geq 0}$ is weakly irreducible in the sense of Theorem~\ref{thm:basis:for:uniqueness}.
\end{Prop}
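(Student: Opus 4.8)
The plan is to deduce the Proposition directly from Lemma~\ref{lem:reachable:zero:state}, which already supplies the crucial quantitative input: a strictly positive lower bound on the transition probability into a small ball around the origin, uniform over bounded sets of initial data. Since the open balls $B_\epsilon$ centered at $0$ form a neighborhood basis of the origin, it is enough to show that $\mu(B_\epsilon) > 0$ for every $\epsilon > 0$ and every invariant measure $\mu$; the weak irreducibility claim with $U_0^* = 0$ then follows immediately from the definition of support.

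First I would fix an arbitrary invariant measure $\mu$ and an arbitrary $\epsilon > 0$. Because $\mu$ is a probability measure on $\RR^N$, one may choose $R > 0$ so large that $\mu(B_R) \ge 1/2$. Applying Lemma~\ref{lem:reachable:zero:state} with this $R$ and with radius $\epsilon/2$ yields a time $T_* = T_*(R,\epsilon/2,\alpha)$ and a constant $c := \inf_{U_0 \in B_R} P_{T_*}(U_0, B_{\epsilon/2}) > 0$. Next I would introduce a smooth cutoff $\phi_\epsilon : \RR^N \to [0,1]$ with $\phi_\epsilon \equiv 1$ on $B_{\epsilon/2}$ and $\phi_\epsilon \equiv 0$ outside $B_\epsilon$, so that $\indFn{B_{\epsilon/2}} \le \phi_\epsilon \le \indFn{B_\epsilon}$ pointwise. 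Using the invariance identity $\mu = P_{T_*}^* \mu$ together with the nonnegativity of $P_{T_*}\phi_\epsilon$, I estimate
\begin{align*}
  \mu(B_\epsilon)
  &\ge \int_{\RR^N} \phi_\epsilon \, d\mu
   = \int_{\RR^N} P_{T_*} \phi_\epsilon \, d\mu
   \ge \int_{B_R} P_{T_*}\phi_\epsilon(U)\, d\mu(U) \\
  &\ge \int_{B_R} P_{T_*}(U, B_{\epsilon/2})\, d\mu(U)
   \ge c\, \mu(B_R) \ge c/2 > 0.
\end{align*}
Since $\epsilon > 0$ was arbitrary, $0$ lies in the support of $\mu$, and since $\mu$ was an arbitrary invariant measure, $\{P_t\}_{t \geq 0}$ is weakly irreducible in the sense of Theorem~\ref{thm:basis:for:uniqueness}.

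The genuine content of the argument has been packaged into Lemma~\ref{lem:reachable:zero:state}, whose proof exploits the strict dissipativity \eqref{eq:pos:def:cond} together with the cancellation \eqref{eq:can:cond}: on the positive-probability event that the Brownian path $\sigma W$ stays uniformly small, the shifted variable $\bar U = U - \sigma W$ obeys a Gr\"onwall estimate that forces it into a small ball in a time depending only on $R$. Consequently the only delicate point at the level of the Proposition is the uniformity of $T_*$ and of the lower bound over the ball $B_R$ --- precisely what the Lemma provides --- which is what allows the single time $T_*$ to be fixed (once $R$, hence $\mu$, is chosen) and then fed into the invariance identity. No genuine obstacle remains beyond this bookkeeping.
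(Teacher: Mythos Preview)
Your proof is correct and follows essentially the same approach as the paper: fix $\mu$ and $\epsilon$, choose $R$ with $\mu(B_R)\ge 1/2$, invoke Lemma~\ref{lem:reachable:zero:state} at radius $\epsilon/2$ to get a uniform lower bound on $P_{T_*}(\cdot,B_{\epsilon/2})$ over $B_R$, then sandwich with a smooth cutoff $\phi_\epsilon$ and use invariance to obtain the same chain of inequalities. Even the particular choices ($\epsilon/2$, $1/2$) and the structure of the estimate match the paper's argument line for line.
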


\section{H\"ormander's Condition}
\label{sec:our:fav:hor}

We turn now to address the smoothing properties of $\{P_t\}_{t \geq 0}$ as required
by the second condition in Theorem~\ref{thm:basis:for:uniqueness}.  It is not
to be expected in general that equations of the form \eqref{eq:Kols:eqn:1} yield such desirable smoothing 
effects.  This is because the second order diffusion terms are not assumed to be positive definite.  
We will therefore need to develop more refined conditions concerning the interactions between the 
drift and diffusion terms in \eqref{eq:fin:dim:model}.

The study of such degenerate parabolic equations goes
back at least to Kolmogorov, \cite{Kolmogoroff1934}, and is the subject of a vast literature.  
A systematic theory of hypo-ellipticity was developed
later by H\"ormander, \cite{Hormander1967} where essentially sharp conditions concerning  
smoothing properties for systems like \eqref{eq:Kols:eqn:1}, were obtained.
Hormander's condition may be formulated for  \eqref{eq:Kols:eqn:1} as follows:  
\begin{Def}\label{def:hormander}
Define recursively
\begin{align}
  \mathcal{V}_0 &= \mbox{span} \{   \sigma_k : k = 1, \ldots, d\}\notag\\
  &\; \; \vdots \label{eq:hormander:sets} \\
  \mathcal{V}_n &= \mbox{span} \{  E, [E, F], [E, \sigma_k]: k = 1, \ldots d \textrm{ and } E \in  \mathcal{V}_{n-1} \}
  \notag
\end{align}
where $F(U) = -\nu AU - B(U,U)$ and the \emph{Lie bracket} between two (smooth) vector fields $G,H: \RR^N \to \RR^N$ is given by 
$[G, H] = \nabla H G - \nabla G H$.   
One says that \eqref{eq:Kols:eqn:1} satisfies the \emph{parabolic H\"ormander
condition} if $\cup_n \mathcal{V}_n(U) = \RR^N$ for all $U \in \RR^N$.  
\end{Def}

With this condition in mind we have the following results for our model \eqref{eq:fin:dim:model}
\begin{Prop}\label{prop:strong:feller:cond}
Suppose that \eqref{eq:fin:dim:model} satisfies \eqref{eq:pos:def:cond}, \eqref{eq:can:cond}, \eqref{eq:noise:structure}
and \eqref{eq:Hormander:cond:asmpt}; that is \eqref{eq:fin:dim:model} 
maintains the parabolic H\"ormander condition given in Definition~\ref{def:hormander}. 
Then, for every $t > 0$ and each $\phi \in C_b^1(\RR^N)$,
\begin{align}
\| \nabla P_t \phi(U_0)\| \leq C\left( \sup_{U \in \RR^N} |\phi(U)|\right),
\label{eq:grad:bound:inst:smoothing}
\end{align}
where $C = C(|U_0|,t)$ is independent of $\phi$.  As such,
the Markov semigroup $\{P_t\}_{t \geq 0}$ associated to 
\eqref{eq:fin:dim:model} satisfies the strong feller condition.
\end{Prop}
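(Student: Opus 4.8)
The plan is to establish the gradient bound \eqref{eq:grad:bound:inst:smoothing} via the Malliavin calculus / control-theoretic approach to hypoellipticity, following the strategy of \cite{HairerMattingly06, MattinglyPardoux1}. The starting point is the representation of $\nabla P_t \phi(U_0)$ obtained by differentiating inside the expectation: for a perturbation direction $\xi \in \RR^N$ we have $\langle \nabla P_t \phi(U_0), \xi \rangle = \E[ \langle \nabla \phi(U(t)), J_{0,t} \xi \rangle ]$, where $J_{0,t}$ is the Jacobian (linearization) flow of \eqref{eq:fin:dim:model} along the trajectory, solving the linearized equation $\partial_t J_{0,t}\xi = -\nu A J_{0,t}\xi - B(U, J_{0,t}\xi) - B(J_{0,t}\xi, U)$ with $J_{0,0}\xi = \xi$. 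This expression still contains $\nabla \phi$, which is the wrong side of the estimate; the whole point is to remove it.

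The key device is an \emph{integration by parts in the Wiener space}. One introduces an admissible control $v \in L^2(\Omega \times [0,t]; \RR^d)$, adapted, and uses the Malliavin integration-by-parts identity to write
\begin{align*}
  \langle \nabla P_t \phi(U_0), \xi \rangle
  = \E\left[ \langle \nabla \phi(U(t)), \mathcal{A}_{0,t} v \rangle \right]
    + \E\left[ \langle \nabla \phi(U(t)), J_{0,t}\xi - \mathcal{A}_{0,t} v\rangle \right],
\end{align*}
where $\mathcal{A}_{0,t} v = \int_0^t J_{s,t}\sigma\, v(s)\, ds$ is the response of the system at time $t$ to the control $v$. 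If one can choose $v$ so that $J_{0,t}\xi - \mathcal{A}_{0,t}v$ is small (ideally zero, as in the strong Feller/elliptic case, or decaying in the asymptotic Feller case), then the second term is controlled and the first term, after integrating by parts in the Wiener integral defining the It\^o correction, becomes $\E[\phi(U(t)) \int_0^t v(s)\,dW(s)] - (\text{terms from the drift of } v)$, which is bounded by $\|\phi\|_\infty$ times the $L^2$-norm of the resulting Skorokhod integral. Concretely: one picks $v$ to drive the Malliavin covariance-weighted dynamics, $v(s) = \sigma^* J_{s,t}^* C_{0,t}^{-1} J_{0,t}\xi$ where $C_{0,t} = \int_0^t J_{s,t}\sigma\sigma^* J_{s,t}^*\, ds$ is the Malliavin covariance matrix; with this choice $\mathcal{A}_{0,t}v = J_{0,t}\xi$ exactly, the second term vanishes, and $\langle \nabla P_t\phi(U_0),\xi\rangle = \E[\phi(U(t))\, \delta(v)]$ where $\delta$ is the Skorokhod integral. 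Hence $\|\nabla P_t\phi(U_0)\| \le \|\phi\|_\infty\, (\sup_{|\xi|=1}\E|\delta(v)|^2)^{1/2}$.

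What remains — and this is the main obstacle — is to show that $\E|\delta(v)|^2$ is finite and bounded by a constant $C(|U_0|,t)$ depending only on the initial data and time. This requires two ingredients. First, moment bounds on the Jacobian $J_{0,t}$ and its Malliavin derivative, which follow from Gr\"onwall-type estimates using the exponential moment bounds on $\int_0^t |U|^2\, ds$ from Lemma~\ref{lem:moments} (the nonlinear term $B$ is quadratic, so the linearized coefficients are controlled by $|U|$, and one needs $\E \exp(\eta \int_0^t |U|^2 ds) < \infty$ precisely to close these estimates). Second, and most delicate, is the \emph{invertibility of the Malliavin matrix} $C_{0,t}$ with quantitative negative-moment bounds, $\E \|C_{0,t}^{-1}\|^p < \infty$ for all $p$. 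This is exactly where the parabolic H\"ormander condition \eqref{eq:Hormander:cond:asmpt} enters: non-degeneracy of $C_{0,t}$ is, heuristically, the statement that the vector fields $\sigma_k$ together with iterated Lie brackets with $F$ span $\RR^N$, so that even though the noise acts directly only in $d$ directions, its effect spreads to all of phase space over any positive time interval. Making this rigorous is the heart of the argument: one localizes on the event that $\langle C_{0,t}\psi,\psi\rangle$ is small for some unit vector $\psi$, and shows via a Norris-type lemma (a quantitative iterated-It\^o-estimate, roughly: if a semimartingale and its quadratic variation are both small then its martingale part and drift are separately small) that this forces $\langle E(U(s))\cdot\psi\rangle$ to be small for successively higher-order brackets $E \in \mathcal{V}_n$, ultimately contradicting $\cup_n \mathcal{V}_n(U_0) = \RR^N$ on a set of overwhelming probability. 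The final assertion, that $\{P_t\}_{t\ge0}$ is strong Feller, then follows from \eqref{eq:grad:bound:inst:smoothing} by a standard approximation argument extending the bound from $C_b^1$ to all of $\mathcal{M}_b(\RR^N)$ and using it to deduce equicontinuity of $P_t\phi$ for bounded measurable $\phi$.
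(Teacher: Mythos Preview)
Your overall architecture matches the paper's proof almost exactly: the same differentiation under the expectation, the same Malliavin integration-by-parts, the same choice of control $v(s)=\sigma^*\JJ_{s,t}^*\MM_{0,t}^{-1}\JJ_{0,t}\xi$ yielding exact matching $\AAA_{0,t}v=\JJ_{0,t}\xi$, and the same reduction to (i) moment bounds on $\JJ$ via the exponential estimate in Lemma~\ref{lem:moments} and (ii) negative moments of $\MM_{0,t}^{-1}$ via a quantitative spectral lower bound under H\"ormander's condition.

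Two points of divergence are worth flagging. First, a small but genuine slip: you call the control $v$ \emph{adapted}. It is not. The expression $\sigma^*\JJ_{s,t}^*\MM_{0,t}^{-1}\JJ_{0,t}\xi$ involves the backward adjoint flow and the time-$t$ Malliavin matrix, so $v(s)$ is $\mathcal{F}_t$-measurable but not $\mathcal{F}_s$-measurable. This is precisely why the Skorokhod integral $\delta(v)$ is needed and why the cost estimate \eqref{eq:cost:of:control:was:not:high} must go through the generalized It\={o} isometry \eqref{eq:gen:ito:isometry}, which carries an extra term $\E\int_0^t\int_0^t\mbox{tr}(\MD_s v(r)\MD_r v(s))\,dr\,ds$. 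The paper computes $\MD v$ explicitly via the chain rule and second-variation process $\JJ^{(2)}$; your sketch elides this, and it is not a triviality.

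Second, for the spectral bound on $\MM_{0,t}$ you invoke a Norris-type lemma. That would work in this finite-dimensional setting, and the paper says so explicitly (see the remark on Norris' approach in Section~\ref{sec:analysis:M}), but the paper deliberately chooses a different route: the iterative scheme of Mattingly--Pardoux / Hairer--Mattingly, built on a H\"older interpolation lemma (Lemma~\ref{lem:small:to:small:prime:wrapper}) and a quantitative Wiener-polynomial estimate (Lemma~\ref{lem:wiener:poly:simple:case}), applied to $s\mapsto\langle\JJ_{s,t}^*\eta,\xi\rangle$ for $\xi$ climbing the bracket hierarchy. The point is that Norris' argument requires inverting $\JJ_{0,t}$, which is harmless here but fatal in the infinite-dimensional SPDE applications the paper has in view; the alternative machinery avoids this inversion entirely. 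So your proposal is correct as a proof of the stated proposition, but it diverges from the paper's pedagogical aim, which is to showcase the method that survives the passage to infinite dimensions.
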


\begin{Rmk}
\label{rmk:ASF:grad:est}
In infinite dimensions it is doubtful that the instantaneous smoothing in \eqref{eq:grad:bound:inst:smoothing}
may be obtained when $\sigma$ is degenerate.  Instead following \cite{HairerMattingly06} one may establish
\begin{align}
	\| \nabla P_t \phi(U_0)\| \leq C\left( \sup_{U} |\phi(U)| + \delta(t) \sup_{U}\|\nabla \phi(U)\|\right),
	\label{eq:asym:time:grad:bound}
\end{align}
where $\delta(t) \to 0$ as $t \to \infty$.
This later bound implies the asymptotic strong Feller condition which, when combined with 
weak irreducibility, is sufficient to derive the conclusion in Theorem~\ref{thm:basis:for:uniqueness}.
\end{Rmk}

It is possible to prove Proposition~\ref{prop:strong:feller:cond} with PDE methods along the lines of \cite{Hormander1967}.  However, given the deep connection alluded to by \eqref{eq:Kols:eqn:1}, 
\eqref{eq:Kols:eqn:2} between parabolic type partial differential equations
on the one hand and white noise driven stochastic differential equations on the other, it is natural to want to have a probabilistic 
strategy to prove smoothing properties for  \eqref{eq:Kols:eqn:1}, \eqref{eq:Kols:eqn:2}. This was the initial motivation for 
the development of the Malliavin calculus initiated in the seminal work \cite{Malliavin1978}.

Below we will follow this more probabilistic approach to establish Theorem~\ref{prop:strong:feller:cond}.   
This is because these methods have been shown to be tractable for analyzing certain hypo-elliptic systems in an 
infinite dimensional setting; see \cite{HairerMattingly06, HairerMattingly2011, FoldesGlattHoltzRichardsThomann2013,FriedlanderGlattHoltzVicol2014}.

\section{Smoothing Through Control}

In this section we show how gradient bounds of the form \eqref{eq:grad:bound:inst:smoothing}
can be translated to a control problem through the usage of Malliavin calculus. 
Fix any unit vector $\xi \in \RR^N$ and observe that
\begin{align}
  \nabla P_t \phi(U_0) \xi = 
  \E \left( \nabla \phi(U(t,U_0)) \JJ_{0,t} \xi \right)
  \label{eq:grad:est:calc:1}
\end{align}
where for any $0 \leq s < t$, $\JJ_{s,t} \xi$ is the solution of the linear system
\begin{align}
  \frac{d}{dt} \rho + \nu A \rho + \nabla B(U) \rho = 0, \quad \rho(s) = \xi,
  \label{eq:J:op:def}
\end{align}
with $\nabla B(U) \rho = B(U, \rho) + B(\rho, U)$.

We would now like to find a systematic way of matching perturbations
of the initial condition in the direction $\xi \in \RR^N$ with a corresponding 
perturbation in the noise in the direction $v(\xi) \in L^2(0,t; \RR^d)$
which will allow us to `remove of the gradient' from $\phi$ in \eqref{eq:grad:est:calc:1}.   
To make sense of these vague statements we now recall some elements of the 
Malliavin calculus. See e.g. \cite{Bell1987, Nualart2009, NourdinPeccati2012} for a systematic overview
of this subject.

\subsection{Some elements of Malliavin Calculus}
The Malliavin calculus is a calculus of variations in noise paths for
random elements expressible as measurable transformations
of an underlying gaussian process.  We will now recall some basic elements of this theory: 
the derivative operator,
the divergence operator (otherwise know as the Skohorhod integral), the 
integration by parts formula and the Malliavin covariance matrix.

For $p \geq 2$ the (Malliavin) 
derivative operator  $\MD: \DD^{1,p} \subset L^p(\Omega) \to L^p(\Omega; L^2(0,t; \RR^d))$ is
defined on the class of `smooth' random variables $\mathcal{S}$ of the form
\begin{align*}
   F = F(W) = f\left( \int_0^t g_1 dW, \ldots, \int_0^t g_n dW\right) 
\end{align*}
where $f: \RR^n \to \RR$ is Schwartz class\footnote{Recall that this refers to all the $C^\infty$ functions $f$
such that $f$ and all its derivatives decay to zero faster than any polynomial at $\infty$.} and $g_1, \ldots g_n$ are deterministic
elements in $L^2(0,t; \RR^d)$ according to\footnote{We follow the notation of denoting $\MD_t F$ as
the Malliavin derivative of $F$ evaluated at time $t$.}
\begin{align}
   \MD_s F = \sum_{k =1}^n \partial_{x_k} f\left( \int_0^t g_1 dW, \ldots, \int_0^t g_n dW\right) g_k(s).
   \label{eq:action:on:smooth:rvs}
\end{align}
Starting from this definition one may show that $\MD$
is a closed operator and we denote its domain by $\DD^{1,p}$.  One immediate consequence of this
definition is that 
\begin{align}
  \textrm{ if } F \in \DD^{1,p} \textrm{ is } \mathcal{F}_r \textrm{ measurable then } \MD_s F = 0  \textrm{ for all } s > r.
  \label{eq:measurabilty:conclusion}
\end{align}
One may also show that for $F \in \DD^{1,p}$
\begin{align}
  \langle \MD F(W), v \rangle_{L^2(0,t; \RR^d)} = \lim_{\epsilon \to 0} \frac{F(W + \epsilon V) - F(W)}{\epsilon}
  \label{eq:md:bismut:def}
\end{align}
where $V(s) =  \int_0^s v(r) dr$ and $v \in L^2(0,t, \RR^d)$.\footnote{The collection of all such $V$ is referred to as
the \emph{Cameron-Martin space.}}  Observe that 
\eqref{eq:md:bismut:def} allows us to view $\MD$ as stochastic gradient operator.

A similar construction applies for random variables taking values in a Hilbert space $H$.  We define
$\MD : \DD^{1,p}(H) \subset L^p(\Omega, H) \to L^p(\Omega; L^2(0,t;\RR^d) \otimes H)$\footnote{Given Hilbert spaces $H_1$, $H_2$ we
are using the notation $H_1 \otimes H_2$ for the tensor product of $H_1$ and $H_2$.  A user friendly 
review of tensor products in the Hilbert space context can be found in \cite{Janson1997}.}
starting with its action $\MD F = \sum_j \MD F_j \otimes h_j$ on 
random element of the form $F = \sum_j F_j h_j$ with $F_j \in \mathcal{S}$, $h_j \in H$.

We will make use of two fundamental elements of the Malliavin calculus.  Firstly we have
a chain rule: for every $F \in \DD^{1,2}(\RR^N)$ and any $\phi \in C^1(\RR^N)$ we 
have that $\phi(F) \in \DD^{1,2}$ and 
\begin{align}
   \MD \phi(F) = \nabla \phi(F) \MD F.
   \label{eq:m:chain:rule}
\end{align}
We next recall an integration by parts formula which starts from an expression of duality.  
Take $\MD^*: \mbox{Dom}(\MD^*) \subset L^2(\Omega; L^2(0,t; \RR^d))
\to L^2(\Omega)$ to be the dual of $\MD$ so that 
\begin{align}
  \E \langle \MD F, v \rangle_{L^2(0,t;\RR^d)} = \E (F \MD^*v)
  \label{eq:m:IBP}
\end{align}
for every $F \in \DD^{1,2}$ and $v \in \mbox{Dom}(\MD^*)$.  
$\MD^*$ is referred to as the divergence operator
and $\MD^*v$ as the Skorokhod integral of $v$. We 
will sometimes use the notation
\begin{align}
  \MD^*v = \int_0^t v dW.
  \label{eq:sk:int}
\end{align}
It can be shown that
$\DD^{1,2}(L^2(0,t;\RR^d)) \subset \mbox{Dom}(\MD^*)$\footnote{Note that $\mbox{Dom}(\MD^*)$
may be characterized as the set of all $v \in L^2(\Omega; L^2(0,t; \RR^d))$ such that 
$|\E \langle \MD F, v \rangle| \leq C (\E |F|^2)^{1/2}$, where $C$ may depend on $v$.} and  
that \eqref{eq:sk:int} is the usual Doeblin-It\={o} integral when 
$v$ is adapted.  A second important property for our purposes
is that \eqref{eq:sk:int} satisfies a generalized
It\={o} isometry.  When $v \in \DD^{1,2}(L^2(0,t;\RR^d))$,
$\MD v \in L^2( \Omega; L^2([0,t]^2; \RR^{d\times d}))$ and 
\begin{align}
   \E \left( \int_0^t v dW\right)^2  
   =  \E \int_0^t |v|^2 dt 
   + \E \int_0^t \int_0^t \mbox{tr}( \MD_s v(r) \MD_r v(s)) dr ds.
\label{eq:gen:ito:isometry}
\end{align}
See \cite[Section 1.3.2]{Nualart2009}. Note that in view of \eqref{eq:measurabilty:conclusion} we recover
the classical It\={o} isometry when $v$ is adapted.

We now introduce $\AAA_{0,t} v$ for random elements $v \in L^2([0,t]; \RR^d)$ as the solution
of 
\begin{align}
  \partial_t \bar{\rho} +  \nu A \bar{\rho} + \nabla B(U) \bar{\rho} = \sigma v = \sum_{k=1}^d \sigma_k v_k,\quad \bar{\rho}(0) = 0
  \label{eq:mal:der:equation:SM}
\end{align}
which means that $\AAA_{0,t} v = \int_0^t \JJ_{s,t} \sigma v(s) ds$ with $\JJ_{s,t}$ defined by \eqref{eq:J:op:def}.
One may show that 
\begin{align*}
\AAA_{0,t} v = \lim_{\epsilon \to 0} \epsilon^{-1} (U(t,U_0, W + \epsilon V)  - U(t, U_0, W))
\end{align*}
where $V(t) = \int_0^t v(s) ds$ 
so that according to \eqref{eq:md:bismut:def}, $\AAA_{0,t} v = \langle \MD U(t,U_0, W), v \rangle$.\footnote{An alternative 
derivation of \eqref{eq:mal:der:equation:SM} from \eqref{eq:fin:dim:model} can be obtained by applying $\MD$ to  
\eqref{eq:fin:dim:model} in its time integrated form and making use of \eqref{eq:m:chain:rule} and \eqref{eq:action:on:smooth:rvs}.
See \cite{Debussche2011a}.}

\subsection{Deriving the Control Problem}

With these basic properties at hand we now return to the computation \eqref{eq:grad:est:calc:1}.  In view of \eqref{eq:m:chain:rule}, \eqref{eq:m:IBP}
we obtain that, for any $v \in \DD^{1,2}(L^2(0,t; \RR^d))$,
\begin{align*}
\nabla P_t \phi(U_0) \xi  &= \E \left(\nabla \phi(U(t,U_0))  \JJ_{0,t} \xi\right)
   =  \E \langle \MD \phi(U(t,U_0)), v \rangle_{L^2(0,t; \RR^d)} 
   + \E \left( \nabla \phi(U(t,U_0)) (\JJ_{0,t} \xi - \AAA_{0,t} v) \right)
   \notag\\
  &=  \E \left(\phi(U(t,U_0)) \int_0^t v dW\right)  + \E \left( \nabla \phi(U(t,U_0)) (\JJ_{0,t} \xi - \AAA_{0,t} v) \right).
\end{align*}
Therefore if for each $\xi \in \RR^N$ there exists an element $v(\xi) \in L^2([0,t], \RR^d)$ such that\footnote{In infinite dimensions
one might only hope to obtain $v(\xi)$ such that $\lim_{t \to \infty} (\JJ_{0,t} \xi - \AAA_{0,t} v(\xi)) = 0$.  With suitable bounds on $\int_0^t v dW$, this would yield asymptotic strong feller type bounds of the form \eqref{eq:asym:time:grad:bound}.  See also Remark~\ref{rmk:control:inf:d} below.}
\begin{align}
	\JJ_{0,t} \xi = \AAA_{0,t} v(\xi)
  \label{eq:control:prob}
\end{align}
we obtain that
\begin{align}
  \nabla P_t \phi(U_0) \xi = \E \left(\phi(U(t, U_0)) \int_0^t v(\xi) dW\right).
  \label{eq:after:IBP}
\end{align}
In other words we have derived the following control problem whose solution immediately
yields Proposition~\ref{prop:strong:feller:cond} as a corollary:
\begin{Prop}\label{lem:control:prob:concrete}
  For every $t > 0$ and $\xi \in \RR^N$ there exists $v(\xi) \in \DD^{1,2}( L^2(0,t, \RR^d))$ such that 
  the solution of 
  \begin{align*}
    \frac{d}{dt} \tilde{\rho} + \nu A \tilde{\rho} + \nabla B(U) \tilde{\rho} = -\sum_{k = 1}^d \sigma_k v_k(s), \quad \tilde{\rho}(0) = \xi,
  \end{align*}
  is zero at time $t$ and 
  \begin{align}
    \sup_{|\xi| = 1} \E\left|\int_0^t v(\xi) dW\right|^2 < \infty.
    \label{eq:cost:of:control:was:not:high}
  \end{align}
\end{Prop}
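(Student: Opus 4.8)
The plan is to exhibit $v(\xi)$ in closed form through the \emph{Malliavin covariance matrix} and then verify the two assertions in turn. Introduce the symmetric positive semi-definite random matrix
\begin{align}
  \MM_{0,t} \;=\; \AAA_{0,t}\AAA_{0,t}^{*} \;=\; \int_0^t \JJ_{s,t}\,\sigma\sigma^{*}\,\JJ_{s,t}^{*}\,ds,
  \label{eq:mal:cov:plan}
\end{align}
where $\AAA_{0,t}^{*}:\RR^N\to L^2(0,t;\RR^d)$ is the adjoint of $\AAA_{0,t}$, namely $(\AAA_{0,t}^{*}\phi)(s)=\sigma^{*}\JJ_{s,t}^{*}\phi$. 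Assuming for the moment that $\MM_{0,t}$ is almost surely invertible, I would set
\begin{align}
  v(\xi)(s) \;=\; \bigl(\AAA_{0,t}^{*}\MM_{0,t}^{-1}\JJ_{0,t}\xi\bigr)(s) \;=\; \sigma^{*}\JJ_{s,t}^{*}\,\MM_{0,t}^{-1}\,\JJ_{0,t}\xi,\qquad s\in[0,t].
  \label{eq:control:plan}
\end{align}
Then $\AAA_{0,t}v(\xi)=\AAA_{0,t}\AAA_{0,t}^{*}\MM_{0,t}^{-1}\JJ_{0,t}\xi=\MM_{0,t}\MM_{0,t}^{-1}\JJ_{0,t}\xi=\JJ_{0,t}\xi$, which is precisely \eqref{eq:control:prob}; equivalently the solution $\tilde\rho$ of the displayed ODE vanishes at time $t$. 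In contrast with the infinite-dimensional construction of \cite{HairerMattingly06}, in finite dimensions no regularization of $\MM_{0,t}$ will be needed.

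Two things must then be checked: (a) $\MM_{0,t}$ is a.s. invertible with $\E\,\|\MM_{0,t}^{-1}\|^{p}<\infty$ for all $p\ge1$; and (b) $v(\xi)\in\DD^{1,2}(L^2(0,t;\RR^d))$ together with the cost bound \eqref{eq:cost:of:control:was:not:high} uniformly in $|\xi|=1$. For (b) I would first collect the routine moment estimates: $\JJ_{s,t}$ and its inverse solve linear ODEs whose coefficients are controlled by $|U|$, so the exponential bound \eqref{eq:exp:martingale:lead:to:awesome:2} of Lemma~\ref{lem:moments} gives $\E\sup_{0\le s\le t}\|\JJ_{s,t}\|^{p}<\infty$ for every $p$; the same inputs yield Malliavin differentiability of $U(t,U_0)$, of $\JJ_{s,t}$, of $\MM_{0,t}$, and, through $\MD\MM_{0,t}^{-1}=-\MM_{0,t}^{-1}(\MD\MM_{0,t})\MM_{0,t}^{-1}$, of $\MM_{0,t}^{-1}$, with $L^{p}$ bounds that feed on the moments from (a). Hence $v(\xi)\in\DD^{1,2}$. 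The cost \eqref{eq:cost:of:control:was:not:high} is then estimated with the generalized It\^o isometry \eqref{eq:gen:ito:isometry}: the first term equals $\E\,\langle\MM_{0,t}^{-1}\JJ_{0,t}\xi,\JJ_{0,t}\xi\rangle$, bounded by $\bigl(\E\,\|\MM_{0,t}^{-1}\|^{2}\bigr)^{1/2}\bigl(\E\,\|\JJ_{0,t}\|^{4}\bigr)^{1/2}$, while the trace term is dominated in the same fashion using the $L^{p}$ bounds on $\MD v(\xi)$. Both are finite by Cauchy--Schwarz once (a) is available, with a constant independent of $\xi$, $|\xi|=1$.

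The crux is (a), the quantitative nondegeneracy of the Malliavin matrix, and this is where the parabolic H\"ormander condition of Definition~\ref{def:hormander} enters. For a unit vector $\phi\in\RR^N$ one has $\langle\MM_{0,t}\phi,\phi\rangle=\int_0^t|\sigma^{*}\JJ_{s,t}^{*}\phi|^2\,ds=\sum_{k}\int_0^t\langle\JJ_{s,t}\sigma_k,\phi\rangle^2\,ds$, so that smallness of $\langle\MM_{0,t}\phi,\phi\rangle$ forces each process $s\mapsto\langle\JJ_{s,t}\sigma_k,\phi\rangle$ to be small in $L^2(0,t)$. Differentiating such a process in $s$ and using the equation \eqref{eq:J:op:def} for $\JJ$ produces, at the next order, terms of the form $\langle\JJ_{s,t}[\sigma_k,F](U_s),\phi\rangle$ and $\langle\JJ_{s,t}[\sigma_k,\sigma_j],\phi\rangle$, i.e. the very brackets generating the spaces $\mathcal{V}_n$. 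A quantitative Doob--Meyer estimate --- the Norris lemma (see \cite{HairerMattingly06}, \cite{Stroock2012}) --- then transfers smallness from the original process to all of these bracketed fields tested against $\phi$; iterating up to the finite level at which $\cup_n\mathcal{V}_n(U)=\RR^N$ contradicts $|\phi|=1$. Quantified and combined with the moment bounds above this gives $\Prb\bigl(\inf_{|\phi|=1}\langle\MM_{0,t}\phi,\phi\rangle<\varepsilon\bigr)\le C_p\,\varepsilon^{p}$ for every $p$, hence $\E\,\|\MM_{0,t}^{-1}\|^{p}<\infty$. I expect this Norris-lemma step to be the main obstacle; the remainder is bookkeeping of moment bounds. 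The natural order of execution is: (i) moment bounds on $\JJ$, $\AAA$, $\MM_{0,t}$ and Malliavin differentiability; (ii) the Norris argument yielding (a); (iii) assembling (i) and (a) into $v(\xi)\in\DD^{1,2}$ and the cost bound via \eqref{eq:gen:ito:isometry}.
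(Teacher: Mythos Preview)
Your construction of the control $v(\xi)=\AAA_{0,t}^{*}\MM_{0,t}^{-1}\JJ_{0,t}\xi$, the derivation of $\E\|\MM_{0,t}^{-1}\|^{p}<\infty$ from a spectral tail bound, and the use of the generalized It\^o isometry \eqref{eq:gen:ito:isometry} together with the Malliavin chain rule $\MD\MM_{0,t}^{-1}=-\MM_{0,t}^{-1}(\MD\MM_{0,t})\MM_{0,t}^{-1}$ are exactly the route the paper takes; the moment inputs you cite (via \eqref{eq:exp:martingale:lead:to:awesome:2}) also coincide.

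The one genuine divergence is in step (a), the nondegeneracy of $\MM_{0,t}$. You propose the classical Norris lemma: write $s\mapsto\langle\JJ_{s,t}\sigma_k,\phi\rangle$ as a semimartingale, apply the quantitative Doob--Meyer estimate, and iterate through the Lie brackets. The paper deliberately avoids this and instead works with the backward process $s\mapsto\langle\JJ_{s,t}^{*}\eta,\psi\rangle$ directly, using two substitute tools: a deterministic $C^{1,\alpha}$ interpolation lemma (small $\|g\|_{L^\infty}$ forces small $\|g'\|_{L^\infty}$ unless the H\"older norm is large) to pass from $\psi$ to $[F,\psi]$, and a bound on first-order Wiener polynomials (Lemma~\ref{lem:wiener:poly:simple:case}) to peel off the $\sigma_j$-brackets from $[F,\psi]$. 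This is the approach of Mattingly--Pardoux and Hairer--Mattingly. The trade-off is explicit in the paper: Norris' argument is more direct in finite dimensions but requires inverting the flow $\JJ_{0,t}$ to produce adapted semimartingales, which is prohibitive in the SPDE setting the notes are meant to prepare. Your route is correct here and arguably cleaner for the finite-dimensional statement; the paper's route is chosen because it survives the passage to infinite dimensions. A minor point: since the $\sigma_k$ are constant vectors, the brackets $[\sigma_k,\sigma_j]$ you mention vanish identically, so only the $[F,\cdot]$ direction actually produces new fields at each stage.
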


\subsection{The Malliavin Matrix}

A solution $v = v(\xi)$ of \eqref{eq:control:prob} is derived 
as follows.   Take the ansatz that $v = \AAA_{0,t}^* \eta$ where $\AAA_{0,t}^*$
is the adjoint of $\AAA_{0,t}$ and $\eta \in \RR^N$.
Observe that $\AAA_{0,t}^*$ almost surely maps $\RR^N$ to $L^2(0,t, \RR^d)$ and acts according to 
\begin{align*}
   (\AAA_{0,t}^* \eta)(s) = \sigma^*\JJ_{s,t}^*\eta
\end{align*}
where $\JJ_{s,t}^*$ is the dual of $\JJ_{s,t}$.  Note that $\JJ_{s,t}^*\eta$ may be seen to be the solution of 
the final value problem
\begin{align}
  - \frac{d}{dt} \rho^* + \nu A \rho^* + (\nabla B(U))^*\rho = 0, \quad \rho^*(t)= \eta.
  \label{eq:J:adj:evol}
\end{align}
With this ansatz we find that 
\begin{align*}
  \MM_{0,t} \eta :=  \AAA_{0,t}\AAA_{0,t}^* \eta = \JJ_{0,t} \xi.
\end{align*}
$\MM_{0,t}$ is called the \emph{Malliavin covariance matrix}.  Under the 
(as yet unjustified) assumption that $\MM_{0,t}$ is invertible
we obtain the following formal solution of \eqref{eq:control:prob}
\begin{align}
  v(\xi) =  \AAA_{0,t}^* \MM_{0,t}^{-1} \JJ_{0,t} \xi.
  \label{eq:control:solution}
\end{align}

\subsection{Assessing the cost of control}
\label{sec:cost:control}

In order to complete the proof of Proposition~\ref{lem:control:prob:concrete} we need
to show that $\MM_{0,t}^{-1}$ is invertible and moreover to establish 
\eqref{eq:cost:of:control:was:not:high}.  Below in Section~\ref{sec:analysis:M} we will prove\footnote{For 
didactic reasons we will only establish Proposition~\ref{prop:main:spec:bound} in full detail for an interesting special case.  See Definition~\ref{def:Hormander:special} below.}
\begin{Prop}
\label{prop:main:spec:bound}
For all $q \geq 1$ there exists $\epsilon_0(q, \nu, |\sigma|) > 0$ and $C = C(q, \nu, |\sigma|, |U_0|)$ such that
\begin{align}
  \Prb \left( \inf_{\eta \in \RR^N \setminus \{0\}}  \frac{\langle \MM_{0,t} \eta, \eta \rangle}{| \eta |^2} \geq \epsilon  \right) 
  \geq 1 - C \epsilon^q 
  \label{eq:Mall:spec:est}
\end{align}
for all $\epsilon > 0$.
\end{Prop}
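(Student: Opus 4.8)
The plan is to establish the quantitative lower bound on the smallest eigenvalue of $\MM_{0,t}$ by the standard Malliavin/H\"ormander bootstrap, adapted to give explicit control over the probability of the small-eigenvalue event. First I would reduce \eqref{eq:Mall:spec:est} to a statement about the quadratic form: if $\langle \MM_{0,t}\eta,\eta\rangle$ is small for some unit vector $\eta$, then $|\AAA_{0,t}^*\eta|_{L^2(0,t;\RR^d)}^2 = \int_0^t |\sigma^* \JJ_{s,t}^* \eta|^2\, ds$ is small, i.e. the process $s \mapsto \sigma^*\JJ_{s,t}^*\eta$ is small in $L^2$ near $s=t$. Working with $\rho^*(s) := \JJ_{s,t}^*\eta$ solving the backward equation \eqref{eq:J:adj:evol}, I would first use a continuity/interpolation estimate (Norris-type lemma, or in finite dimensions just a Gr\"onwall argument for $\rho^*$ together with a deterministic lemma relating the $L^2$-smallness of $\sigma^*\rho^*$ to pointwise smallness of $\sigma^*\rho^*(t) = \sigma^*\eta$) to conclude that $|\sigma_k^\top \eta|$ is small for each $k$, at the cost of a polynomial factor and on an event of probability $\geq 1 - C\epsilon^q$.

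Next comes the bootstrap: having $\sigma^*\rho^*$ small in $L^2$, I would differentiate in $s$ and use \eqref{eq:J:adj:evol} to produce smallness of $\langle (\nu A + (\nabla B(U))^*)\rho^*, \sigma_k\rangle$, hence — after again trading $L^2$-in-time smallness for pointwise smallness via the Norris/continuity lemma and controlling the extra terms using the exponential moment bounds \eqref{eq:exp:martingale:lead:to:awesome:2} on $\sup_{[0,t]}|U(s)|$ — of $\langle \eta, [F,\sigma_k](U(t))\rangle$ and $\langle\eta,[\sigma_j,\sigma_k](U(t))\rangle$ up to controlled polynomial losses. Iterating $n$ times shows that $\langle \eta, E(U(t))\rangle$ is small for every $E$ in the H\"ormander family $\mathcal{V}_n$ of Definition~\ref{def:hormander}. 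Since the parabolic H\"ormander condition \eqref{eq:Hormander:cond:asmpt} guarantees $\cup_n \mathcal{V}_n(U(t)) = \RR^N$, there is (on compact sets of $U(t)$, which we control with \eqref{eq:exp:martingale:lead:to:awesome:2}) a uniform $n_*$ and a lower bound $\inf_{|U|\le M}\inf_{|\eta|=1}\sum_{E\in\mathcal{V}_{n_*}}\langle\eta,E(U)\rangle^2 =: c(M) > 0$, forcing $|\eta|$ itself to be small — a contradiction with $|\eta|=1$ once $\epsilon$ is below a threshold $\epsilon_0(q,\nu,|\sigma|)$. Chaining the probability estimates from each bootstrap step (each of the form $1 - C\epsilon^{q'}$ for suitable exponents) and a union bound over the steps yields \eqref{eq:Mall:spec:est}.

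The main obstacle is the repeated passage from ``$\sigma^*\rho^*$ (or its higher brackets) is small in $L^2(0,t)$'' to ``the corresponding bracket evaluated at the terminal time is pointwise small,'' which is precisely the content of the Norris lemma / iterated-Kolmogorov-continuity estimate. In finite dimensions this is cleaner than in the SPDE setting, but it still requires: (i) uniform-in-$\omega$ Lipschitz/H\"older bounds on the time derivatives of the relevant vector fields along the trajectory, which reduce to bounds on $\sup_{[0,t]}|U|$, $|\dot U|$, $|\JJ_{s,t}|$, $|\rho^*|$ — all available from the a priori estimates (\eqref{eq:energy:est:basic}, \eqref{eq:exp:martingale:lead:to:awesome:2}) and Gr\"onwall applied to \eqref{eq:J:op:def}, \eqref{eq:J:adj:evol}; and (ii) careful tracking of how the polynomial exponent $q$ degrades at each of the finitely many bootstrap steps so that the final exponent can be made $\ge q$ by starting from a large enough exponent (this is why the statement quantifies over all $q\ge1$). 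I would handle the $U(t)$-dependence of $n_*$ and $c(M)$ by first conditioning on the high-probability event $\{\sup_{[0,t]}|U(s)|\le M\}$ with $M$ chosen so that its complement has probability $\le C\epsilon^q$ via \eqref{eq:exp:martingale:lead:to:awesome:2}, so that on the good event everything is uniform. As flagged in the footnote to Proposition~\ref{prop:main:spec:bound}, I would carry out the bracket-generating step in full detail only for the special H\"ormander structure of Definition~\ref{def:Hormander:special}, where the relevant iterated brackets can be computed explicitly and the deterministic spectral lower bound $c(M)$ is transparent.
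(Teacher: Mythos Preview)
Your proposal is broadly on the right track---the overall architecture of ``small eigenvalue $\Rightarrow$ small inner products with successive brackets $\Rightarrow$ contradiction with spanning'' is exactly the paper's strategy---but the mechanism you invoke for the bootstrap step differs from what the paper does, and your version has a technical gap worth flagging.

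You propose to extract the higher brackets by differentiating $s\mapsto \langle\rho^*(s),[F,\sigma_k](U(s))\rangle$ and invoking a Norris-type lemma. The difficulty is that $\rho^*(s)=\JJ_{s,t}^*\eta$ is \emph{not adapted} (it solves a backward equation with terminal data at $t$), so this process is not a semimartingale in the usual filtration and Norris does not apply directly. The standard remedy in finite dimensions is to pass first to the reduced Malliavin matrix $\JJ_{0,t}^{-1}\MM_{0,t}(\JJ_{0,t}^*)^{-1}$, built from the adapted processes $s\mapsto \JJ_{0,s}^{-1}\sigma_k$; then Norris applies cleanly. This is a legitimate alternative proof in $\RR^N$, but you should be explicit about the reduction---as written, ``differentiate and apply Norris to $\rho^*$'' does not go through.

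The paper deliberately avoids the Norris route (see the remark titled ``Norris' Approach'') precisely because inverting $\JJ$ does not generalize to infinite dimensions. It stays with the non-adapted backward process and uses two pathwise tools instead: (i) an elementary $C^{1,\alpha}$ interpolation lemma (Lemma~\ref{lem:small:to:small:prime:wrapper}) saying that if $\|g\|_{L^\infty}$ is small then either $\|g'\|_{L^\infty}$ is small or $\|g\|_{C^{1,\alpha}}$ is large---this replaces your ``$L^2$-small $\Rightarrow$ pointwise small'' step and also produces the first bracket $[F,\xi]$; and (ii) a quantitative Wiener-polynomial lemma (Lemma~\ref{lem:wiener:poly:simple:case}): if $A_0(s)+\sum_j A_j(s)W^j(s)$ is uniformly small, then either each $A_j$ is small or some $A_j$ has large Lipschitz norm. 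The key algebraic point is that under Assumption~\ref{def:Hormander:special}, writing $U=\bar U+\sum_j\sigma_jW^j$ turns $\langle\JJ_{s,t}^*\eta,[F,\xi]\rangle$ into exactly such a first-order Wiener polynomial whose $W^j$-coefficient is $\langle\JJ_{s,t}^*\eta,[[F,\xi],\sigma_j]\rangle$. This extracts the next layer of brackets without ever differentiating a semimartingale. A further simplification you are missing: the elements of $\mathcal{W}_n$ are \emph{constant} vectors, so your compactness argument with $c(M)$ and conditioning on $\{\sup|U|\le M\}$ is unnecessary---the spanning lower bound is a fixed $\kappa>0$ as in \eqref{eq:span:conclus}.
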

\noindent Since $\MM_{0,t}$ is symmetric and positive, it is clear from this proposition that $\MM_{0,t}$ is almost surely invertible.  Moreover, we have following the  corollary:
\begin{Cor}\label{cor:mom:M:cor}
For every $p \geq 2$, there exists $K = K(p, \nu, |\sigma|, |U_0|) < \infty$ such that
\begin{align}
   \E \| \MM_{0,t}^{-1}\|^p \leq K < \infty.
   \label{eq:finite:moments:Mal:M}
\end{align}
\end{Cor}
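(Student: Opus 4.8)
The plan is to convert the lower tail estimate on the smallest eigenvalue of $\MM_{0,t}$ furnished by Proposition~\ref{prop:main:spec:bound} into a polynomial upper tail estimate on $\|\MM_{0,t}^{-1}\|$ and then integrate via the layer-cake formula. First I would observe that, since $\MM_{0,t}$ is symmetric and positive semi-definite and almost surely invertible by Proposition~\ref{prop:main:spec:bound}, its inverse is again symmetric positive definite and its operator norm is the reciprocal of the smallest eigenvalue of $\MM_{0,t}$, that is $\|\MM_{0,t}^{-1}\| = \big(\inf_{\eta \in \RR^N \setminus \{0\}} \langle \MM_{0,t}\eta, \eta\rangle / |\eta|^2\big)^{-1}$ almost surely. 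Consequently, for any $\lambda > 0$, choosing $\epsilon = 1/\lambda$ in \eqref{eq:Mall:spec:est} yields $\Prb\big(\|\MM_{0,t}^{-1}\| > \lambda\big) = \Prb\big(\inf_{\eta \neq 0} \langle \MM_{0,t}\eta, \eta\rangle / |\eta|^2 < 1/\lambda\big) \leq C \lambda^{-q}$, valid for every $q \geq 1$ with the corresponding constant $C = C(q,\nu,|\sigma|,|U_0|)$.

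Next I would apply the identity $\E \|\MM_{0,t}^{-1}\|^p = \int_0^\infty p \lambda^{p-1} \Prb\big(\|\MM_{0,t}^{-1}\| > \lambda\big)\, d\lambda$, valid for the nonnegative random variable $\|\MM_{0,t}^{-1}\|$, and split the integral at $\lambda = 1$. On $(0,1]$ I would bound the probability trivially by $1$, so that this portion contributes at most $\int_0^1 p\lambda^{p-1}\, d\lambda = 1$. On $[1,\infty)$ I would fix once and for all $q = p + 2$ — which is permitted precisely because Proposition~\ref{prop:main:spec:bound} holds for every $q \geq 1$ — and use the tail bound $\Prb(\|\MM_{0,t}^{-1}\| > \lambda) \leq C \lambda^{-q}$ to estimate $\int_1^\infty p\lambda^{p-1} C\lambda^{-q}\, d\lambda = pC \int_1^\infty \lambda^{-3}\, d\lambda = pC/2 < \infty$. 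Adding the two contributions gives $\E \|\MM_{0,t}^{-1}\|^p \leq 1 + pC/2 =: K$, which depends only on $p$, $\nu$, $|\sigma|$, $|U_0|$ through the constant $C = C(p+2,\nu,|\sigma|,|U_0|)$, as asserted.

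There is no real obstacle in this argument; the entire analytic content is carried by Proposition~\ref{prop:main:spec:bound}. The one point deserving a word of care is the freedom to take the polynomial decay rate $q$ in \eqref{eq:Mall:spec:est} strictly larger than $p$, so that the tail integral $\int_1^\infty \lambda^{p-1-q}\, d\lambda$ converges; this is exactly the reason Proposition~\ref{prop:main:spec:bound} is stated uniformly over all $q \geq 1$ rather than for a single fixed exponent.
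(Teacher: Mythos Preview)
Your proof is correct and follows essentially the same approach as the paper: convert the spectral lower bound \eqref{eq:Mall:spec:est} into a polynomial tail estimate on $\|\MM_{0,t}^{-1}\|$ via the identification of $\|\MM_{0,t}^{-1}\|$ with the reciprocal of the smallest eigenvalue, and then integrate using the layer-cake identity $\E|X|^p = p\int_0^\infty x^{p-1}\Prb(|X|>x)\,dx$. The paper's version is slightly terser and does not commit to an explicit choice of $q$ or a splitting point, but the argument is identical in substance.
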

\begin{proof}[proof of Corollary~\ref{cor:mom:M:cor}]
Observe that for any $\epsilon > 0$ 
\begin{align*}
  \left\{ \inf_{\eta \in \RR^N \setminus \{0\}}  \frac{\langle \MM_{0,t} \eta, \eta \rangle}{| \eta |^2} \geq \epsilon  \right\}
  \subset \left\{ \frac{1}{\epsilon^2} \geq \| \MM_{0,t}^{-1}\|^2 \right\}.
\end{align*}
Hence using \eqref{eq:Mall:spec:est} we obtain that for any $r \geq 1$
there exists an $x_0 > 0$, $C > 0$ depending on $r$ and the data such 
\begin{align*}
\Prb \left(  \| \MM_{0,t}^{-1}\| > x \right) \leq  C x^{-r}
\end{align*}
for all $x \geq x_0$.  The desired bound \eqref{eq:finite:moments:Mal:M} now
follows from the elementary identity $\E |X|^p = p \int_0^\infty x^{p-1} \Prb(|X| > x) dx$.
\end{proof}

With Corollary~\ref{cor:mom:M:cor} in hand we establish \eqref{eq:cost:of:control:was:not:high} using the generalized It\={o}
isometry \eqref{eq:gen:ito:isometry} as follow.  
Start with the first term in \eqref{eq:gen:ito:isometry}
\begin{align*} 
 \E\int_0^t |\AAA_{0,t}^* \MM_{0,t}^{-1} \JJ_{0,t} \xi|^2 ds
 \leq& (\E\|\MM_{0,t}^{-1}\|^6 \E\|\JJ_{0,t}\|^6)^{1/3}
 \int_0^t (\E \| \sigma^* \JJ_{s,t}^*\|^6)^{1/3} ds\notag\\
 \leq& C 
 \left(\E \|\MM_{0,t}^{-1}\|^6 
         \E \|\JJ_{0,t}\|^6 
       \sup_{s \in [0,t]} \E \| \JJ_{s,t}\|^6   \right)^{1/3}.
\end{align*} 
We therefore need a suitable bound for $\JJ_{s,t}$.  Working from \eqref{eq:J:op:def} we find
\begin{align*}
  \frac{d}{dt} | \rho|^2 + \alpha | \rho |^2 \leq  |\langle B(\rho, U), \rho \rangle|
  \leq C |\rho|^2 |U|.
\end{align*}
Rearranging and applying the Gr\"onwall lemma and 
the bound \eqref{eq:exp:martingale:lead:to:awesome:2} 
we infer that for any $p, \eta > 0$
\begin{align}
  \E \left(\sup_{s \in [0,t]} \| \JJ_{s,t}\|^p \right) \leq C\E\exp\left(\eta\int_0^t |U|^2ds\right)  \leq C \exp(\eta |U_0|^2) < \infty.
  \label{eq:J:bnd}
\end{align}
for a universal constant $C = C(p,\eta, |\sigma|, t)$.

In order to 
treat the second term in \eqref{eq:gen:ito:isometry} we use the Malliavin chain rule to obtain
\begin{align}
  \MD v(\xi) &= \MD \AAA_{0,t}^* \MM_{0,t}^{-1} \JJ_{0,t} \xi 
  - \AAA_{0,t}^* \MM_{0,t}^{-1}\MD \MM_{0,t} \MM_{0,t}^{-1} \JJ_{0,t} \xi 
  + \AAA_{0,t}^* \MM_{0,t}^{-1}\MD \JJ_{0,t} \xi \notag\\
  &= \MD \AAA_{0,t}^* \MM_{0,t}^{-1} \JJ_{0,t} \xi 
  - \AAA_{0,t}^* \MM_{0,t}^{-1}(\MD \AAA_{0,t}\AAA_{0,t}^* + \AAA_{0,t} \MD \AAA_{0,t}^* )\MM_{0,t}^{-1} \JJ_{0,t} \xi 
  + \AAA_{0,t}^* \MM_{0,t}^{-1}\MD \JJ_{0,t} \xi.
  \label{eq:mall:der:yields:MESS}
\end{align}
A computation making use of 
\eqref{eq:measurabilty:conclusion} yields that
\begin{align}
  \MD_\tau \JJ_{s,t}\xi =
  \begin{cases}
     \JJ^{(2)}_{\tau,t}( \sigma, \JJ_{s,\tau} \xi) & \textrm{ when } s < \tau,\\
     \JJ^{(2)}_{s,t}(\JJ_{\tau, s} \sigma, \xi) &  \textrm{ when } s \geq \tau,
  \end{cases}
  \label{eq:MD:JJ}
\end{align}
where for $\xi, \xi' \in \RR^N$, $\tilde{\rho} = \JJ_{s,t}^{(2)}(\xi, \xi')$  is the solution of
\begin{align*}
 \frac{d}{dt} \tilde{\rho} + \nu A \tilde{\rho} + B(u,\tilde{\rho}) + B(\tilde{\rho},u)  + 
 B(\JJ_{s,t} \xi ,\JJ_{s,t} \xi') + B(\JJ_{s,t} \xi',\JJ_{s,t} \xi) = 0, \quad \tilde{\rho}(s) = 0.
\end{align*}
Combining \eqref{eq:mall:der:yields:MESS} with \eqref{eq:MD:JJ} and using \eqref{eq:exp:martingale:lead:to:awesome:2} 
one now obtains suitable bounds for the second term in \eqref{eq:gen:ito:isometry}.  This completes
the proof of \eqref{eq:cost:of:control:was:not:high} (modulo the proof of Proposition~\ref{prop:main:spec:bound}, given below).

\begin{Rmk}[Control In Infinite Dimensions]
\label{rmk:control:inf:d}
  In infinite dimensions it is not clear that the Malliavin matrix is invertible
  and therefore we cannot employ \eqref{eq:control:solution} to obtain gradient 
  bounds, a la \eqref{eq:grad:bound:inst:smoothing}.  However when a slightly 
  more uniform version of the H\"ormander condition is satisfied a spectral 
  bound similar to Proposition~\ref{prop:main:spec:bound} still holds (and may
  be established using the methods outlined below in Section~\ref{sec:analysis:M}).
  In \cite{HairerMattingly06}, for the analysis of the 2D stochastic Navier-Stokes equations 
  the solution \eqref{eq:control:solution} is replaced with a control involving a suitable 
  regularization of $\MM_{0,t}$.  By combining this modified $v$, the spectral analysis of the Malliavin 
  matrix and a Foias-Prodi type bound, \cite{FoiasProdi1967}, to control small scales with the parabolic dissipation coming from 
  viscous terms, an estimate of the form \eqref{eq:asym:time:grad:bound} was established.  
  This basic approach has proven to be effective for the analysis of a number of other nonlinear systems; see 
  \cite{HairerMattingly2011, FoldesGlattHoltzRichardsThomann2013,FriedlanderGlattHoltzVicol2014}.  Note that in 
  each of these works different infinite dimensional variations of H\"ormander's classical condition are explored.
\end{Rmk}

\section{A Spectral Analysis of the Malliavin Matrix}
\label{sec:analysis:M}

Our final task is to establish the spectral bound on $\MM_{0,t}$ given
in Proposition~\ref{prop:main:spec:bound}.  This is where the 
`rubber meets the road' for establishing the strong feller property in the sense 
that it is at this point in the analysis that we make 
use of fact that the H\"ormander bracket condition (cf. Definition~\ref{def:hormander})
is satisfied.

Broadly speaking the proof may be understood as an iterative proof by contradiction
on sets of quantitatively large measure.  First we show that if $\MM_{0,t}$
has a small eigenvalue then (with large probability) the corresponding (unit) eigenvector 
must have a small inner product with all elements in the sets given in 
\eqref{eq:hormander:sets}.  On the other hand we have made the assumption that 
the elements of \eqref{eq:hormander:sets} form a spanning set for $\RR^N$.  We therefore
conclude that $\MM_{0,t}$ does not posses such a small eigenvalue, as desired.

\subsection{A special case of the H\"ormander Condition}
In order to focus on the main ideas we will prove 
Proposition~\ref{prop:main:spec:bound} only for a special case; we emphasize that
Proposition~\ref{prop:main:spec:bound} holds in the generality of the conditions given in 
Proposition~\ref{prop:strong:feller:cond} and that basic approach of the analysis is the same.
\begin{Asmp}
\label{def:Hormander:special}
Let:
\begin{align}
  \mathcal{W}_0 &:= 
   \{\sigma_j : j = 1, \ldots d \} \notag\\
  			&\;  \vdots 
			\label{eq:simplified:hor}\\
  \mathcal{W}_n &:= 
  \{[[F,\psi],\sigma_j] = B(\psi, \sigma_{j}) + B(\sigma_{j}, \psi)
  : j = 1, \ldots d , \psi \in \mathcal{W}_{n-1}\}	 \cup \mathcal{W}_{n -1}
  \notag
\end{align}
We suppose that there exists an $n$ such that $\mbox{span}(\mathcal{W}_n ) = \RR^N$.
\end{Asmp}

\begin{Rmk}[Domain of Applicability of Assumption~\ref{def:Hormander:special}]
The special case of the H\"ormander bracket condition given in Assumption~\ref{def:Hormander:special}
is sufficient to directly address several interesting situations namely the 2 and 3 dimensional Navier-Stokes equations
with a stochastic forcing acting through trigonometric basis elements.  This is because, in these cases $B(e_j, e_k) \sim e_{j + k}$
(where $\{e_k\}_{k}$ represents a trigonometric basis for $L^2(\mathbb{T}^2)$); see \cite{EMattingly2001, Romito2004}.  
Other situations of interest require a more involved bracket structure.  
See \cite{FoldesGlattHoltzRichardsThomann2013, FriedlanderGlattHoltzVicol2014}.
While the proof given below does not directly cover these later cases the main ideas are 
the same.  See \cite{HairerMattingly2011, FoldesGlattHoltzRichardsThomann2013}
for further details.
\end{Rmk}

\begin{Rmk}[Norris' Approach]
Note that the analysis below will follow the approach developed in \cite{MattinglyPardoux1,BakhtinMattingly2007,HairerMattingly2011}.
While it it would be more direct to use the method of Norris \cite{Norris1986} (see also \cite{Nualart2009, Hairer2011} for a more 
recent account) this would require an inversion of $\JJ_{0,t}$ defined by \eqref{eq:J:op:def}.
Such an inversion is of course a severe restriction for addressing infinite dimensional problems.  
\end{Rmk}

\subsection{Preliminary Bounds}

We next establish two preliminary estimates.  In particular these estimates provide a means of obtaining quantitive bounds
corresponding to brackets against $F$ and to brackets against $\sigma_j$ respectively.

The first estimate (taken from \cite{FoldesGlattHoltzRichardsThomann2013})
makes use of the basic fact that if a function's supremum is small then 
either its derivative is small or its $C^{1,\alpha}$ norm must be large.  
\begin{Lem}
\label{lem:small:to:small:prime:wrapper}
Fix any $\alpha >0, t > 0$. Consider a family of random functions $g_\phi : \Omega \times [0,t] \to \RR$ indexed by
a countable collection of elements $\Dtest$ such that $g_\phi \in C^{1,\alpha}([0,t])$ 
almost surely for each $\phi \in \Dtest$.  Take 
\begin{align}
  \Omega_{\phi, \epsilon} = 
  \left\{  \|g_\phi\|_{L^\infty} \leq \epsilon \textrm{ and } 
  	\|g_\phi'\|_{L^\infty} \geq \epsilon^{\alpha/(2(1 + \alpha))} \right\},
	\label{eq:exceptional:sets:derivatives}
\end{align}
for any $\epsilon > 0$ and $\phi \in \Dtest$ and let $\Omega_\epsilon = \cup_{\phi \in \Dtest} \Omega_{\phi, \epsilon}$.  Then there exists $\epsilon_0 = \epsilon_0(\alpha, t)$ such
that
\begin{align}
  \Prb(\Omega_\epsilon) 
  \leq 4^{\frac{2q(1+ \alpha)}{\alpha}} \epsilon^q 
  \E \left( \sup_{\phi \in \Dtest} \|g_\phi \|_{C^{1,\alpha}}^{2q /\alpha} \right),
   \label{eq:exceptional:sets:est:derivatives}
\end{align}
for every $\epsilon < \epsilon_0$.  In particular 
\begin{align*}
  \Omega_{\epsilon}^c =  \bigcap_{\phi \in \Dtest}
  \left\{  \|g_\phi\|_{L^\infty} > \epsilon \,\textrm{ or } \,
  	\|g_\phi'\|_{L^\infty} < \epsilon^{\alpha/(2(1 + \alpha))} \right\},
\end{align*}
so that on  $\Omega_{\epsilon}^c$ the implication
\begin{align}
  \|g_\phi\|_{L^\infty} \leq \epsilon \quad \Rightarrow \quad \|g_\phi'\|_{L^\infty} < \epsilon^{\alpha/(2(1 + \alpha))}
\label{eq:user:friendly:deriv:implic}
\end{align}
holds for any $\phi \in  \Dtest$.
\end{Lem}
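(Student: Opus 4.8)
The plan is to bound $\Prb(\Omega_{\phi,\epsilon})$ for a single $\phi$ by an interpolation argument and then sum over the countable index set $\Dtest$ using a union bound. The key quantitative input is the elementary interpolation inequality relating $\|g'\|_{L^\infty}$, $\|g\|_{L^\infty}$, and the H\"older seminorm of $g'$: for a function $g \in C^{1,\alpha}([0,t])$ one has, at any point $s$ where $|g'(s)|$ is close to $\|g'\|_{L^\infty}$, that $g'$ does not vary much over an interval of length $h \sim (\|g'\|_{L^\infty}/\|g'\|_{C^{0,\alpha}})^{1/\alpha}$, so integrating $g'$ over such an interval forces $\|g\|_{L^\infty} \gtrsim \|g'\|_{L^\infty} \cdot h$, i.e. $\|g\|_{L^\infty} \gtrsim \|g'\|_{L^\infty}^{1 + 1/\alpha} / \|g'\|_{C^{0,\alpha}}^{1/\alpha}$. (One must be slightly careful near the endpoints of $[0,t]$, but choosing $\epsilon_0$ small depending on $t$ guarantees the relevant subinterval fits inside $[0,t]$; this is the only role of $\epsilon_0(\alpha,t)$.) Rearranging, on $\Omega_{\phi,\epsilon}$ where $\|g_\phi\|_{L^\infty} \leq \epsilon$ and $\|g_\phi'\|_{L^\infty} \geq \epsilon^{\alpha/(2(1+\alpha))}$, we deduce a lower bound on $\|g_\phi\|_{C^{1,\alpha}}$ of the form $\|g_\phi\|_{C^{1,\alpha}} \gtrsim \epsilon^{-\alpha/(2(1+\alpha)) \cdot \alpha}$ up to constants, equivalently $\|g_\phi\|_{C^{1,\alpha}}^{2(1+\alpha)/\alpha} \gtrsim \epsilon^{-1}$ times an explicit numerical factor.

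Concretely: on $\Omega_{\phi,\epsilon}$ the interpolation bound gives $\|g_\phi'\|_{L^\infty}^{1+1/\alpha} \leq C \|g_\phi\|_{L^\infty} \|g_\phi'\|_{C^{0,\alpha}}^{1/\alpha} \leq C\epsilon \|g_\phi\|_{C^{1,\alpha}}^{1/\alpha}$, while $\|g_\phi'\|_{L^\infty}^{1+1/\alpha} \geq \epsilon^{\frac{\alpha}{2(1+\alpha)} \cdot \frac{1+\alpha}{\alpha}} = \epsilon^{1/2}$. Combining, $\epsilon^{1/2} \leq C\epsilon \|g_\phi\|_{C^{1,\alpha}}^{1/\alpha}$, hence $\|g_\phi\|_{C^{1,\alpha}}^{1/\alpha} \geq C^{-1}\epsilon^{-1/2}$, i.e. $\|g_\phi\|_{C^{1,\alpha}}^{2q/\alpha} \geq C^{-2q} \epsilon^{-q}$, which on $\Omega_{\phi,\epsilon}$ yields $\indFn{\Omega_{\phi,\epsilon}} \leq C^{2q}\epsilon^q \|g_\phi\|_{C^{1,\alpha}}^{2q/\alpha}$. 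Taking the supremum over $\phi \in \Dtest$ before applying $\indFn{\Omega_\epsilon} \leq \sup_{\phi} \indFn{\Omega_{\phi,\epsilon}}$ (valid since $\Omega_\epsilon$ is the union and the bound is uniform in $\phi$), then taking expectations, gives $\Prb(\Omega_\epsilon) \leq C^{2q}\epsilon^q \, \E\big(\sup_{\phi \in \Dtest}\|g_\phi\|_{C^{1,\alpha}}^{2q/\alpha}\big)$, with the constant $C^{2q}$ matching $4^{2q(1+\alpha)/\alpha}$ after tracking the numerical value in the interpolation inequality. The final two displayed assertions about $\Omega_\epsilon^c$ are then just the logical contrapositive of the definition of $\Omega_{\phi,\epsilon}$ together with De Morgan, requiring no further work.

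I expect the main (minor) obstacle to be the careful bookkeeping in the interpolation inequality near the endpoints of $[0,t]$ and the pinning down of the exact numerical constant so that it comes out as $4^{2q(1+\alpha)/\alpha}$ rather than some other explicit power; the countability of $\Dtest$ is used only to make the union bound / measurability of $\Omega_\epsilon$ and of $\sup_{\phi}\|g_\phi\|_{C^{1,\alpha}}$ unproblematic, so that step is routine. Nothing here uses H\"ormander's condition — this lemma is purely a deterministic interpolation fact dressed up probabilistically, and its role downstream is to convert "a bracket direction has small Malliavin pairing" into "a derived (higher-order) bracket direction also has small pairing," which is where the bracket hierarchy \eqref{eq:simplified:hor} will eventually be invoked.
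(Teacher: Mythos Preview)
Your proposal is correct and follows essentially the same route as the paper. The paper states the interpolation inequality in the form $\|f'\|_{L^\infty}\le 4\|f\|_{L^\infty}\max\{t^{-1},\|f\|_{L^\infty}^{-1/(1+\alpha)}\|f'\|_{C^\alpha}^{1/(1+\alpha)}\}$, takes $\epsilon_0=(t/4)^{2(1+\alpha)/(2+\alpha)}$ to kill the first branch of the max, and then applies Chebyshev; raising the second branch to the power $(1+\alpha)/\alpha$ gives exactly your inequality with $C=4^{(1+\alpha)/\alpha}$, so your constant $C^{2q}$ matches $4^{2q(1+\alpha)/\alpha}$. One small wording issue: your opening ``plan'' says you will use a union bound (i.e.\ sum over $\phi$), which would fail for a countably infinite $\Dtest$; your concrete argument correctly uses $\indFn{\Omega_\epsilon}=\sup_\phi \indFn{\Omega_{\phi,\epsilon}}$ together with the $\phi$-uniform pointwise bound, which is exactly what the paper does via Chebyshev on $\sup_\phi\|g_\phi\|_{C^{1,\alpha}}^{2/\alpha}$.
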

\begin{proof}[Proof of Lemma~\ref{lem:small:to:small:prime:wrapper}]
  Recall the interpolation inequality
   \begin{align*}
    \| f '\|_{L^\infty} \leq 4  \|f\|_{L^\infty} \max \left\{\frac{1}{t},
    \| f\|_{L^\infty}^{-1/(1 + \alpha)}
    \| f' \|_{C^{\alpha}}^{1/(1 + \alpha)}  \right\} \,,
    \label{eq:holder:interp:0}
 \end{align*}
 which is valid for any $f \in C^{1,\alpha}([0,t])$.   This means that
 on $\Omega_\epsilon$
 \begin{align*}
  \epsilon^{\frac{\alpha}{2(1 + \alpha)}} 
    \leq 4 \max \left\{\epsilon \frac{1}{t} ,
    \epsilon^{\alpha/(1 + \alpha)}
    \| g'_{\phi} \|_{C^{\alpha}}^{1/(1 + \alpha)}  \right\}
 \end{align*}
 for some $\phi \in \Dtest$.  Hence, for every $\epsilon \leq \epsilon_0(\alpha, t) := \left(\frac{t}{4}\right)^{\frac{2(1+\alpha)}{2 + \alpha}}$,
 we have that 
 \begin{align*}
   \epsilon^{-\frac{\alpha}{2(1 + \alpha)}} 
    \leq 4 \| g'_{\phi} \|_{C^{\alpha}}^{1/(1 + \alpha)} 
 \end{align*}
on $\Omega_\epsilon$ for some $\phi \in \Dtest$.  We thus infer,
 \begin{align*}
  \Prb(\Omega_\epsilon) \leq \Prb\left( 4^{\frac{2(1+ \alpha)}{\alpha}} \sup_{\phi \in \Dtest}  \|g_\phi \|_{C^{1,\alpha}}^{2/\alpha} \geq \epsilon^{-1}  \right)
 \end{align*}
 for every $\epsilon \leq  \epsilon_0$.
 The desired bound \eqref{eq:exceptional:sets:est:derivatives} now follows from Chebshev's inequality, completing the proof.
\end{proof} 

The next bound on first order Wiener polynomials is a special case of a more general result found
in \cite{HairerMattingly2011}.\footnote{Lemma~\ref{lem:wiener:poly:simple:case}  
can be extended to polynomials of arbitrary order and the bounds presented can be significantly sharpened.  We
restrict our attention to this special case (which is sufficient for our purposes here) for clarity of presentation.}  It expresses
the intuitive fact that a Wiener polynomial can be small only if either each of its coefficients are small or if alternatively each coefficient 
fluctuates wildly.

\begin{Lem}
\label{lem:wiener:poly:simple:case}
Consider
\begin{align*}
  F(t) = A_0(t) + \sum_{j=1}^d A_j(t) W^j(t)
\end{align*}
where each $A_j : \Omega \times [0,t] \to \RR$ is an arbitrary continuous stochastic process
and $W = (W^{1}, \ldots, W^{d})$ is a standard ($d$-dimensional) brownian motion.
Fix any $q \geq 1$ and any $r > 0$.  Then, for each $\epsilon > 0$, there exist a measurable set $\Omega_\epsilon$ 
such that
\begin{align}
  \Prb(\Omega_{\epsilon}) \leq C\epsilon^q,
\end{align}
where $C = C(q, r,t)$ is independent of $\epsilon$ and the coefficients $A_0, \ldots, A_d$, and so that on $\Omega_\epsilon^c$
\begin{align}
   \sup_{s \in [0,t]} |F(t)| \leq \epsilon^r
   \quad \Rightarrow \quad
   \begin{cases}
   \textrm{either}& \max\limits_{j =1, \ldots, d} \sup\limits_{s \in [0,t]} |A_j(t)| \leq (d + 3)\epsilon^{r/16},\\
   \textrm{or}&   \max\limits_{j =0, \ldots, d} \sup\limits_{\substack{s_1, s_2\in [0,t]\\ s_1 \not= s_2}}
   				\frac{|A_j(s_1)- A_j(s_2)|}{|s_2-s_1|} \geq \epsilon^{-r/16}.
   \end{cases}
   \label{eq:good:wiener:behavior}
\end{align}
\end{Lem}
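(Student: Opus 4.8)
The plan is to prove the contrapositive form of \eqref{eq:good:wiener:behavior}: on a set of probability at least $1 - C\epsilon^q$, if $\sup_{s}|F(s)| \le \epsilon^r$ and moreover every coefficient $A_j$ has modulus of continuity bounded by $\epsilon^{-r/16}$ (i.e.\ we are on the complement of the ``or'' event), then $\max_j \sup_s |A_j(s)| \le (d+3)\epsilon^{r/16}$. The key point is that the supremum of $F$ controls a \emph{discrete} sample of the Wiener polynomial: for any fixed time $s_0$, we may evaluate at $s_0$ and at nearby random increments of the Brownian paths, and because the vectors $(1, W^1(s_1), \dots, W^d(s_d))$ at suitably chosen nearby times are, with overwhelming probability, quantitatively linearly independent, we can solve a small linear system to recover the $A_j(s_0)$. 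The error terms introduced by replacing $A_j(s_i)$ by $A_j(s_0)$ are controlled by the assumed H\"older bound on the $A_j$, at the cost of a worse power of $\epsilon$ (this is the source of the exponent $r/16$ versus $r$).

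The steps, in order, would be: \emph{(1)} Cover $[0,t]$ by a mesh of spacing $\delta = \epsilon^{a}$ for a suitable exponent $a$ (of order $r/8$), and on the mesh introduce the event $G_\epsilon$ on which the Brownian increments behave regularly, namely $|W^j(s) - W^j(s')| \le \delta^{1/2}\epsilon^{-b}$ for $|s-s'|\le \delta$ (a modulus-of-continuity bound, with probability $\ge 1 - C\epsilon^q$ by the Gaussian tail / L\'evy modulus estimate) together with a quantitative non-degeneracy event for the relevant $(d+1)\times(d+1)$ matrices of Brownian values at clusters of mesh-adjacent points, whose smallest singular value is $\ge \epsilon^{b'}$ with probability $\ge 1 - C\epsilon^q$ (this uses that the density of a nondegenerate Gaussian vector near a lower-dimensional set is small, an anti-concentration estimate). \emph{(2)} Fix an arbitrary $s_0\in[0,t]$, pick $d+1$ mesh points $s_0, s_1, \dots, s_d$ within distance $O(\delta)$; then $F(s_i) = A_0(s_i) + \sum_j A_j(s_i) W^j(s_i)$, and writing $A_j(s_i) = A_j(s_0) + O(\delta \cdot \epsilon^{-r/16})$ using the assumed H\"older control, we get a linear system $M \vec{A}(s_0) = \vec{F} + (\text{error})$ where $M$ is the Brownian-value matrix. \emph{(3)} Invert: $|\vec{A}(s_0)| \le \|M^{-1}\|\big(|\vec F| + |\text{error}|\big) \le \epsilon^{-b'}(\epsilon^r + \delta\epsilon^{-r/16}(1+\delta^{1/2}\epsilon^{-b}))$, and choose the exponents $a, b, b'$ (all proportional to $r$) so that the right side is $\le (d+3)\epsilon^{r/16}$; since $s_0$ was arbitrary this gives the claimed uniform bound on $\Omega_\epsilon^c := G_\epsilon$. \emph{(4)} Assemble the probability bound: $\Omega_\epsilon$ is the union of the complements of the two good events from step (1), so $\Prb(\Omega_\epsilon)\le C\epsilon^q$ with $C = C(q,r,t)$ independent of the coefficients, exactly as required.

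The main obstacle is step (1), specifically obtaining the quantitative non-degeneracy of the Brownian matrices with a polynomially small failure probability that is \emph{uniform over the mesh} and has the free exponent $q$. One must estimate, for a small ball of radius $\rho$, the probability that the $(d+1)$ Brownian vectors at a fixed cluster of times have a Gram determinant $\le \rho$; this is a small-ball/anti-concentration bound for Gaussians which gives a power of $\rho$, and then one takes a union bound over the $O(\delta^{-1})$ mesh clusters — so one needs $\rho$ a sufficiently high power of $\epsilon$ to absorb the $\delta^{-1} = \epsilon^{-a}$ factor while still beating $\epsilon^q$. Balancing these exponents against the H\"older-error exponent $r/16$ is the delicate bookkeeping; the generous slack in $r/16$ (versus the input $r$) is precisely what makes such a choice possible, and is why the lemma is stated with that particular exponent rather than a sharp one.
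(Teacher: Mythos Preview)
Your overall strategy coincides with the paper's: both argue the contrapositive (assume $\|F\|_\infty \le \epsilon^r$ together with the Lipschitz bound on the $A_j$, and conclude $\max_j\|A_j\|_\infty \le (d+3)\epsilon^{r/16}$), both partition $[0,t]$ into subintervals of length $\epsilon^\kappa$, and both use the Lipschitz control to pass between mesh points and arbitrary times. The divergence is in how the coefficients are recovered from the smallness of $F$ on a single subinterval, and here your proposal has a genuine gap.

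You sample at $d+1$ mesh-adjacent times, form the $(d+1)\times(d+1)$ matrix $M$ with rows $(1, W^1(s_i),\ldots, W^d(s_i))$, and invert. The difficulty is the claim that $\Prb(\sigma_{\min}(M) < \epsilon^{b'}) \le C\epsilon^q$ with a \emph{free} exponent $q$. After row reduction the relevant $d\times d$ block is, up to a fixed linear map, $\sqrt{\delta}$ times a matrix with i.i.d.\ Gaussian columns, and the lower tail of the least singular value of such a matrix is only \emph{linear}: $\Prb(\sigma_{\min}\le\rho)\asymp c_d\,\rho$. Hence the best you obtain is $\Prb(\sigma_{\min}(M)<\epsilon^{b'})\lesssim \epsilon^{b'-a/2}$, and after the union bound over $\epsilon^{-a}$ clusters, at most $\epsilon^{b'-3a/2}$. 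Meanwhile the inversion step forces $\epsilon^{-b'}\epsilon^r \le (d+3)\epsilon^{r/16}$, i.e.\ $b'\le 15r/16$. So the failure probability you can achieve is bounded below by a \emph{fixed} power of $\epsilon$, and the argument collapses once $q$ exceeds roughly $15r/16$. Finitely many Brownian samples simply do not carry enough anti-concentration to make $q$ free.

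The paper circumvents this by never forming a matrix. On each subinterval it freezes the coefficients at the left endpoint, obtaining a uniform bound on $|\tilde A^m|\cdot|\langle u_{\tilde A^m}, \tilde W^m(\tau)\rangle|$ for all $\tau\in[0,1]$, where $\tilde W^m$ is the Brownian rescaling of $W$ to that subinterval. Since $\tau\mapsto\langle u,\tilde W^m(\tau)\rangle$ is a one-dimensional Brownian motion for every unit vector $u$, the small-ball estimate $\Prb\bigl(\sup_{\tau\in[0,1]}|B(\tau)|\le\rho\bigr)\le (4/\pi)\exp(-\pi^2/(8\rho^2))$ applies, and a finite net on $\mathbb{S}^{d-1}$ handles the uniformity in $u$. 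This decay is \emph{exponential} in $\rho^{-2}$, so it survives the polynomial union bound over the mesh and delivers any prescribed $\epsilon^q$. That exponential-versus-polynomial distinction---coming from using the full continuous Brownian path on each subinterval rather than $d+1$ discrete evaluations---is exactly what your outline is missing.
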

\begin{proof}
 Fix any $\epsilon >0$.  We will proceed by proving that there exists a set $\Omega_\epsilon$
 with $\Prb(\Omega_\epsilon) = C \epsilon^q$ such that off this set if 
 \begin{align}
  \|F\|_{L^\infty} < \epsilon^r \quad \textrm{ and } \quad \sup_{j= 0, \ldots d} \| A_j\|_{Lip} \leq \epsilon^{-r/16}
  \label{eq:good:F:A:asp}
\end{align}
then
\begin{align}
  \sup_{j= 1, \ldots, d} \| A_j\|_{L^\infty} \leq (d + 3) \epsilon^{r/16}.
    \label{eq:good:F:A:imp}
\end{align}
 
 Split the interval $[0,t]$ into subintervals of length $\epsilon^{\kappa}$ where 
 $\kappa > 0$ will be determined presently. Observe that
 \begin{align*}
   F(s) = F(m\epsilon^{\kappa}) + A_0(s) - A_0(m \epsilon^{\kappa}) 
   +\sum_{j = 1}^d  A_j(m \epsilon^{\kappa}) (W^j(s) - W^j(m \epsilon^{\kappa}))
             + \sum_{j = 1}^d (A_j(s) - A_j(m \epsilon^{\kappa}))W^j(s),
 \end{align*}
 for any $s \in [0,t]$, $\epsilon > 0$ and every integer $m \in [0, t \epsilon^{-\kappa}]$.  Now define
 \begin{align*}
    \Omega_{\epsilon, 1} := \left\{ \|W\|_{L^\infty} \geq \epsilon^{-\kappa/4} \right\}.
 \end{align*}
 Using Doob's inequality we infer that $\Prb(\Omega_{\epsilon,1}) \leq C\epsilon^q$ 
 for a constant $C = C(t, \kappa, q)$ independent of $\epsilon$.
 Supposing that \eqref{eq:good:F:A:asp} holds we infer that, off of $\Omega_{\epsilon, 1}$,
 \begin{align}
   \left|\sum_{j = 1}^d  A_j(m \epsilon^\kappa) (W_j(s) - W_j(m \epsilon^\kappa)) \right| &\leq 
   2 \|F\|_{L^{\infty}} +   | A_0(s) - A_0(m \epsilon^\kappa)| + 
   \sum_{j = 1}^d | A_j(s) - A_j(m \epsilon^\kappa)| |W_j(s)| \notag\\
   &\leq 2 \epsilon^r + (d+1)\epsilon^{3\kappa/4 -r/16} \notag
 \end{align}
 for all $s \in [m \epsilon^\kappa, (m+1) \epsilon^\kappa]$ and each $m = 0, \ldots, t \epsilon^{-\kappa}$.  
 Using the Brownian scaling we may rewrite this as
 \begin{align}
    |\tilde{A}^m||\langle u_{\tilde{A}^m}, \tilde{W}^m(\tau) \rangle| \leq 2 \epsilon^{r- \kappa/2} + (d+1)\epsilon^{\kappa/4 -r/16}
    \label{eq:lower:bnd:A}
 \end{align}
 valid for $\tau \in [0,1]$ and $m = 0, \ldots, t \epsilon^{-\kappa}$ where
 \begin{align*}
   \tilde{A}(t) := (A_1(t), \ldots, A_d(t)), \quad \tilde{A}^m := \tilde{A}(m \epsilon^\kappa), \quad u_{\tilde{A}^m} := \frac{\tilde{A}^m}{|\tilde{A}^m|}
 \end{align*}
 and
 \begin{align*}
   \tilde{W}^m(\tau) =  \frac{W((m + \tau)\epsilon^\kappa) - W(m \epsilon^\kappa)}{\epsilon^{\kappa/2}}.
 \end{align*}

In order to take advantage of \eqref{eq:lower:bnd:A} to infer \eqref{eq:good:F:A:imp} from \eqref{eq:good:F:A:asp} we 
make use of the \emph{small ball probability estimate} 
 \begin{align}
   \Prb\left( \sup_{t \in [0,1]} |B(t)| \leq \epsilon \right) \leq \frac{4}{\pi}\exp\left( -\frac{\pi^2}{8 \epsilon^2}\right)
   \label{eq:small:ballz}
 \end{align}
which holds for every $\epsilon > 0$ and is valid for any 1-d standard brownian motion $B$; see \cite{CiesielskiTaylor1962} and 
also e.g. \cite{Li2001,BerglundGentz2006}
 for further references.  Now consider any $d$-dimensional brownian motion $U$.    For a given $\epsilon > 0$ pick
 pick points $u_1, \ldots, u_{M_\epsilon}$ on the unit sphere $\mathbb{S}^d$ so that any $u$ on $\mathbb{S}^d$
 is a distance at most $\epsilon^2$  from one of these points.    Define
 \begin{align*}
   \Omega_{\epsilon, U}
   := \bigcup_{j=1}^{M_\epsilon} \biggl\{ \sup_{t \in [0,1]} |\langle U, u_j\rangle | \leq 2\epsilon \biggr\}
   \cup \{ |U| \geq \epsilon^{-1}\}.
 \end{align*}
 Notice that the points $u_j$ defining these sets can be chosen in such a fashion that
 $M_\epsilon$, the number of points, grows at most algebraically in $\epsilon^{-1}$ as $\epsilon \to 0$.  Using this 
 observation and \eqref{eq:small:ballz} we find that, for any $p \geq 1$
 \begin{align*}
    \Prb\left( \Omega_{\epsilon, U} \right) < C \epsilon^{p}
 \end{align*}
 for some suitable  $C = C(p) > 0$ which can be chosen independently of $\epsilon$.  Notice furthermore that, on $\Omega_{U, \epsilon}^c$, 
 given any $u \in \mathbb{S}^d$ we have that $|\langle U, u\rangle|  \geq |\langle U, u_j \rangle | - |U| |u - u_j|$
 for any one of our selected points $u_j$.
 Hence, for any $u \in \mathbb{S}^d$, $|\langle U, u\rangle| \geq \epsilon$ on $\Omega_{\epsilon, U}$.

 Taking now
 \begin{align*}
    \Omega_{\epsilon, 2} = 
      \bigcup_{m=1}^{t\epsilon^{-\kappa}}
          \Omega_{\epsilon^{\kappa/8}, W^m}
 \end{align*}
 we infer from the preceding discussion that, for the desired value of $q > 0$ there exists $C = C(q) > 0$ s.t.
 \begin{align*}
  \Prb( \Omega_{\epsilon, 2}) \leq 
   C \epsilon^q
 \end{align*}
 for every $\epsilon > 0$.  Thus, off of $\Omega_\epsilon = \Omega_{\epsilon,1} \cup \Omega_{\epsilon,2}$, 
 whenever \eqref{eq:good:F:A:asp} holds we have
 \begin{align*}
       |\tilde{A}^m|  \leq 2 \epsilon^{r- 5\kappa/8} + (d+1)\epsilon^{\kappa/8 -r/16}
 \end{align*}
 for each $m  = 0, \ldots, t\epsilon^{-\kappa}$.  As such, if \eqref{eq:good:F:A:asp} holds, on $\Omega_\epsilon^{c}$
 we have that for any $s \in [m \epsilon^{\kappa}, (m+1) \epsilon^\kappa]$
 \begin{align*}
   |\tilde{A}(s)| \leq | \tilde{A}(s)- \tilde{A}^m| + |\tilde{A}^m| 
   \leq \epsilon^{\kappa - r/16} + 2 \epsilon^{r- 5\kappa/8} + (d+1)\epsilon^{\kappa/8 -r/16}.
 \end{align*}
 Since this bound is maintained for each $m  = 0, \ldots, t\epsilon^{-\kappa}$
 we infer
 \begin{align*}
   \max_{j =1, \ldots, d} \|A_j\|_{L^\infty} \leq (d + 3) \epsilon^{\kappa/16}
 \end{align*}
 on $\Omega_\epsilon^{c}$ by choosing $\kappa = r$.  The proof is now complete.

\end{proof}

\subsection{Translating Bracket Operations to Quantitative Bounds}

We now make use of the machinery developed in the previous section to
prove two lemmata which give quantitative bounds for individual
bracket operations. In the following section we will string together these
individual steps to complete the proof of Proposition~\ref{prop:main:spec:bound}.

We begin by showing that if the Malliavin matrix has a small eigenvalues then all of the directly forced directions
must also be small (with high probability).  This corresponds to building the initial set
$\mathcal{W}_0$ in the sequence of sets defined in \eqref{eq:simplified:hor}.

\begin{Lem}\label{lem:inital:step}
For all $q \geq 1$ and any countable collection $\Dtest \subset \SN$, there exists measurable sets $\Omega_{\epsilon}$ defined
for every $\epsilon >0$ with
\begin{align}
  \Prb(\Omega_{\epsilon}) \leq C \epsilon^q 
  \label{eq:bad:set:small:initial}
\end{align}
where $C = C(\nu, |\sigma|, |U_0|)$ and such that on $\Omega_{\epsilon}^c$
\begin{align}
 \langle \MM_{0,t} \eta, \eta \rangle \leq \epsilon 
  \quad \Rightarrow  \quad
  \max_{j = 1,\ldots, d} \sup_{s\in [0, t]} |\langle \JJ_{s,t}^* \eta, \sigma_j\rangle| \leq \epsilon^{1/4}
  \label{eq:implication:initial:step}
\end{align}
for every $\eta \in \Dtest$.
\end{Lem}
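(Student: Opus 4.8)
The plan is to connect the Malliavin matrix quadratic form $\langle \MM_{0,t} \eta, \eta \rangle$ directly to the quantities $\langle \JJ_{s,t}^* \eta, \sigma_j \rangle$ and then invoke Lemma~\ref{lem:small:to:small:prime:wrapper}. Recall from the definition $\MM_{0,t} = \AAA_{0,t} \AAA_{0,t}^*$ and $(\AAA_{0,t}^* \eta)(s) = \sigma^* \JJ_{s,t}^* \eta$, so that
\begin{align*}
  \langle \MM_{0,t} \eta, \eta \rangle
  = \| \AAA_{0,t}^* \eta \|_{L^2(0,t;\RR^d)}^2
  = \int_0^t |\sigma^* \JJ_{s,t}^* \eta|^2 \, ds
  = \sum_{j=1}^d \int_0^t \langle \JJ_{s,t}^* \eta, \sigma_j \rangle^2 \, ds.
\end{align*}
Thus the hypothesis $\langle \MM_{0,t} \eta, \eta \rangle \leq \epsilon$ gives that each $\int_0^t \langle \JJ_{s,t}^* \eta, \sigma_j\rangle^2 ds \leq \epsilon$, i.e. the functions $g_{\eta,j}(s) := \langle \JJ_{s,t}^* \eta, \sigma_j \rangle$ are small in $L^2$ — but the conclusion \eqref{eq:implication:initial:step} asks for smallness in $L^\infty$. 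To bridge $L^2$-smallness to $L^\infty$-smallness I would use a one-sided interpolation: if a $C^1$ function has small $L^2$ norm and controlled derivative, it has small $L^\infty$ norm. Concretely, $\|g\|_{L^\infty}^2 \leq \frac{C}{t}\|g\|_{L^2}^2 + C\|g\|_{L^2}\|g'\|_{L^\infty}$, or more simply one notes $\|g\|_{L^\infty} \lesssim \|g\|_{L^2}^{1/2}(\|g\|_{L^2} + \|g'\|_{L^\infty})^{1/2}$ on $[0,t]$. The appropriate device is really Lemma~\ref{lem:small:to:small:prime:wrapper} itself applied to the family $\{g_{\eta,j}\}$: on the good set $\Omega_\epsilon^c$ from that lemma, $\|g_{\eta,j}\|_{L^\infty} \leq \tilde\epsilon$ forces $\|g_{\eta,j}'\|_{L^\infty} < \tilde\epsilon^{\alpha/(2(1+\alpha))}$, and feeding this back into the interpolation inequality upgrades the $L^2$ bound to an $L^\infty$ bound with a slightly degraded exponent; choosing $\alpha$ (say $\alpha = 1$, since these are $C^{1,1}$ in time) and tracking exponents yields the clean power $\epsilon^{1/4}$.

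The key steps, in order, are: (1) expand $\langle \MM_{0,t}\eta,\eta\rangle$ as $\sum_j \|g_{\eta,j}\|_{L^2}^2$ as above; (2) verify that $g_{\eta,j}(\cdot) = \langle \JJ_{\cdot,t}^*\eta, \sigma_j\rangle$ is almost surely $C^{1,\alpha}$ in $s$ with moments controlled uniformly over $\eta \in \SN$ — this follows by differentiating the backward equation \eqref{eq:J:adj:evol} in $s$, writing $\frac{d}{ds}\langle \JJ_{s,t}^*\eta,\sigma_j\rangle = \langle \JJ_{s,t}^*\eta, \nu A\sigma_j + (\nabla B(U(s)))^*{}^*\sigma_j\rangle$ type expressions, and using the exponential moment bound \eqref{eq:exp:martingale:lead:to:awesome:2} together with the operator bound \eqref{eq:J:bnd} to get $\E\sup_{\eta\in\SN}\|g_{\eta,j}\|_{C^{1,\alpha}}^p < \infty$ for every $p$; (3) apply Lemma~\ref{lem:small:to:small:prime:wrapper} to the countable family $\{g_{\eta,j} : \eta \in \Dtest, j = 1,\dots,d\}$ to produce the exceptional set $\Omega_\epsilon$ with $\Prb(\Omega_\epsilon) \leq C\epsilon^q$ for the required $q$; (4) on $\Omega_\epsilon^c$, run the interpolation argument: from $\langle \MM_{0,t}\eta,\eta\rangle \leq \epsilon$ deduce $\|g_{\eta,j}\|_{L^2} \leq \epsilon^{1/2}$, hence (choosing the threshold in Lemma~\ref{lem:small:to:small:prime:wrapper} appropriately, e.g. at scale $\epsilon^{1/2}$) either $\|g_{\eta,j}\|_{L^\infty}$ is already $\leq \epsilon^{1/4}$ or $\|g_{\eta,j}'\|_{L^\infty}$ is small, and in the latter case $\|g_{\eta,j}\|_{L^\infty}^2 \lesssim \|g_{\eta,j}\|_{L^2}^2/t + \|g_{\eta,j}\|_{L^2}\|g_{\eta,j}'\|_{L^\infty} \lesssim \epsilon + \epsilon^{1/2}\cdot(\text{small}) \leq \epsilon^{1/2}$ after adjusting constants, giving $\|g_{\eta,j}\|_{L^\infty} \leq \epsilon^{1/4}$; (5) absorb constants by shrinking $\epsilon$ below some $\epsilon_0(\nu,|\sigma|,|U_0|,t)$ and, for $\epsilon \geq \epsilon_0$, note the bound \eqref{eq:bad:set:small:initial} is trivially arranged by enlarging $C$ so that $C\epsilon^q \geq 1$.

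The main obstacle is step (2): obtaining the $C^{1,\alpha}$-in-time regularity of $s \mapsto \langle \JJ_{s,t}^*\eta, \sigma_j\rangle$ \emph{with moment bounds that are uniform over the sphere} $\eta \in \SN$ and that do not blow up. This requires care because $\JJ_{s,t}^*$ solves a \emph{backward} linear equation whose coefficients involve the solution $U$, so its time-regularity is inherited from $U$'s paths; one gets Hölder-$\alpha$ (indeed Lipschitz, so any $\alpha < 1$) control only after paying with powers of $\sup_{[0,t]}|U|$, which is precisely why the exponential moment estimate \eqref{eq:exp:martingale:lead:to:awesome:2} is essential. A secondary subtlety is bookkeeping the exponents so that the degradation from $L^2$ to $L^\infty$ lands exactly at $\epsilon^{1/4}$ rather than some ad hoc power — this is purely a matter of choosing $\alpha$ and the cutoff scale in Lemma~\ref{lem:small:to:small:prime:wrapper}, and there is ample room since the final exponent $1/4$ in \eqref{eq:implication:initial:step} is not sharp.
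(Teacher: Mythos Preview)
Your overall strategy --- expand $\langle \MM_{0,t}\eta,\eta\rangle$ as an $L^2$ norm and upgrade to $L^\infty$ via time-regularity of $s\mapsto\langle\JJ_{s,t}^*\eta,\sigma_j\rangle$ --- is sound, and the moment estimates you sketch in step~(2) are exactly the right ingredients. But step~(4) misfires. Lemma~\ref{lem:small:to:small:prime:wrapper} gives, on $\Omega_\epsilon^c$, the implication $\|g\|_{L^\infty}\leq\epsilon \Rightarrow \|g'\|_{L^\infty}<\epsilon^{\alpha/(2(1+\alpha))}$: it passes from smallness of the function to smallness of its derivative. Applied to your $g_{\eta,j}(s)=\langle\JJ_{s,t}^*\eta,\sigma_j\rangle$, the hypothesis of the lemma is precisely the \emph{conclusion} you are trying to reach, so the dichotomy you write (``either $\|g_{\eta,j}\|_{L^\infty}\leq\epsilon^{1/4}$ or $\|g'_{\eta,j}\|_{L^\infty}$ is small'') is not what $\Omega_\epsilon^c$ actually delivers. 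On that set the disjunction reads $\|g_{\eta,j}\|_{L^\infty}>\tilde\epsilon$ \emph{or} $\|g'_{\eta,j}\|_{L^\infty}$ small, and the first alternative is the opposite of what you want; there is no way to rule it out from $\|g_{\eta,j}\|_{L^2}\leq\epsilon^{1/2}$ alone.

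The paper avoids this circularity by applying Lemma~\ref{lem:small:to:small:prime:wrapper} not to $g_{\eta,j}$ but to the \emph{antiderivative}
\[
  g_\eta(s)=\sum_{j=1}^d\int_0^s\langle\JJ_{r,t}^*\eta,\sigma_j\rangle^2\,dr.
\]
Since the integrand is nonnegative, $\|g_\eta\|_{L^\infty}=g_\eta(t)=\langle\MM_{0,t}\eta,\eta\rangle$, so the hypothesis $\|g_\eta\|_{L^\infty}\leq\epsilon$ is \emph{exactly} the assumption in \eqref{eq:implication:initial:step}, and the lemma (with $\alpha=1$, using that $g_\eta''$ is bounded via \eqref{eq:J:adj:evol} and \eqref{eq:J:bnd}) immediately yields $\|g_\eta'\|_{L^\infty}=\sup_s\sum_j\langle\JJ_{s,t}^*\eta,\sigma_j\rangle^2\leq\epsilon^{1/4}$. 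Your route \emph{can} be repaired without the lemma: set $\Omega_\epsilon=\{\sup_{\eta\in\Dtest,\,j}\|g'_{\eta,j}\|_{L^\infty}\geq\epsilon^{-\beta}\}$ for some small $\beta>0$, bound $\Prb(\Omega_\epsilon)$ by Chebyshev and the moment estimate from step~(2), and on $\Omega_\epsilon^c$ apply your interpolation $\|g\|_{L^\infty}^2\lesssim\|g\|_{L^2}^2/t+\|g\|_{L^2}\|g'\|_{L^\infty}$ directly. This works but yields only $\epsilon^{1/4-\beta/2}$; the antiderivative trick is both cleaner and gives the stated exponent without loss.
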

\begin{proof}
  We begin by observing that, for any $\eta \in \RR^N$,
  \begin{align*}
     \langle \MM_{0,t} \eta, \eta \rangle = \int_0^t |\sigma^* \JJ_{s,t}^* \eta|^2 ds
     = \sum_{j =1}^d \int_0^t \langle \JJ_{s,t}^* \eta, \sigma_j \rangle^2 ds.
  \end{align*}
  For each $\eta \in \Dtest$ we take 
  \begin{align*}
  g_\eta(s) = \sum_{j =1}^d \int_0^s \langle \JJ_{r,t}^* \eta, \sigma_j \rangle^2 dr
  \end{align*}
  and observe
  \begin{align*}
  g_\eta'(s) = \sum_{j =1}^d \langle \JJ_{s,t}^* \eta, \sigma_j \rangle^2,\quad 
  g_\eta''(s) = 2\sum_{j =1}^d \langle \JJ_{s,t}^* \eta, \sigma_j \rangle
  \langle \frac{d}{ds}\JJ_{s,t}^* \eta, \sigma_j \rangle =2\sum_{j =1}^d \langle \JJ_{s,t}^* \eta, \sigma_j \rangle
  \langle \JJ_{s,t}^* \eta, [F,\sigma_j] \rangle;
  \end{align*}
  see \eqref{eq:J:adj:evol} for the final equality.  In view of applying Lemma~\ref{lem:small:to:small:prime:wrapper}
  we now estimate
  \begin{align}
    \E \left( \sup_{\eta \in \Dtest} |g_\eta|_{C^{2}}^{2} \right)
    \leq 2| \sigma| \E \sup_{s \in [0, t]}\left( \| \JJ_{s,t}\|^2  \sum_{j = 1}^d| \nu A \sigma_j + B(\sigma_j, U) + B(U, \sigma_j))| \right)
    \leq C \E\exp(\expCon |U_0|^2),
    \label{eq:U:quant:est:1}
  \end{align}
  where the final inequality is obtained from \eqref{eq:exp:martingale:lead:to:awesome:2}, \eqref{eq:J:bnd} and  the constant 
  $C = C(|\sigma|, \kappa, t)$ is independent of $\eta \in \Dtest \subset \SN$.  By applying Lemma~\ref{lem:small:to:small:prime:wrapper}
  we infer the existence of sets $\Omega_\epsilon$, defined for all $\epsilon > 0$, which satisfy \eqref{eq:bad:set:small:initial}.   
  Moreover, off of each $\Omega_\epsilon$, the implication \eqref{eq:implication:initial:step} holds for every $\eta \in \Dtest$, 
  cf. \eqref{eq:user:friendly:deriv:implic}.  The proof is now complete.
\end{proof}

We next show that if a set of direction $\dir^{(l)} \in \RR^N$, $l = 1, \ldots, m$ is small then, with 
high probability, $[[F, \dir^{(l)}], \sigma_j]$ is small for every $j = 1, \ldots, d$ and every
$l = 1, \ldots, m$.    We will use this lemma below to build the new elements in $\mathcal{W}_n$ from $\mathcal{W}_{n-1}$
defined in \eqref{eq:simplified:hor}.
\begin{Lem}
\label{lem:direction:to:F:bomb}
Fix any collection $\dir^{(l)} \in \RR^N$, $l = 1, \ldots, m$.  For every $\delta > 0$, $q \geq 1$ 
and any countable collection $\Dtest \subset \SN$,
there exists measurable sets $\Omega_{\epsilon}$ defined for every 
 $\epsilon >0$ such that
\begin{align}
  \Prb(\Omega_{\epsilon}) \leq C \epsilon^q 
  \label{eq:good:set:cond}
\end{align}
and so that on $\Omega_{\epsilon}^c$ the implication
\begin{align}
  \max_{l = 1, \ldots, m}\sup_{s\in [t/2, t]} |\langle \JJ_{s,t}^* \eta, \dir^{(l)} \rangle| \leq \epsilon^{\delta}
  \quad \Rightarrow  \quad
  \max_{\substack{l = 1,\ldots, m\\ j = 1, \ldots,d}} \sup_{s\in [t/2, t]} |\langle \JJ_{s,t}^* \eta, [[F,\dir^{(l)}],\sigma_j]\rangle| \leq \epsilon^{\frac{\delta}{80}}
  \label{eq:implication:for:iteration}
\end{align}
holds for every $\eta \in \Dtest$.
\end{Lem}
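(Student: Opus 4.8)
The plan is to mimic the two--step structure of the proof of Lemma~\ref{lem:inital:step}: first convert ``the test quantity $\langle \JJ_{s,t}^*\eta, \dir^{(l)}\rangle$ is uniformly small'' into ``its time derivative is uniformly small'' via Lemma~\ref{lem:small:to:small:prime:wrapper} --- this is what produces a bracket against $F$ --- and then recognise the resulting quantity as a first order Wiener polynomial in $W$ and read off its coefficients with Lemma~\ref{lem:wiener:poly:simple:case}, which is what produces the brackets against the $\sigma_j$. Concretely, fix $\eta \in \Dtest$, set $f^{(l)}(s) = \langle \JJ_{s,t}^*\eta, \dir^{(l)}\rangle$ and differentiate in $s$: using the backward equation \eqref{eq:J:adj:evol} for $\JJ_{s,t}^*\eta$ one finds
\begin{align*}
  \frac{d}{ds} f^{(l)}(s) = \langle \JJ_{s,t}^*\eta, [F,\dir^{(l)}](U(s))\rangle =: g^{(l)}(s),
  \qquad [F,\dir^{(l)}](U) = \nu A\dir^{(l)} + B(U,\dir^{(l)}) + B(\dir^{(l)},U).
\end{align*}
Since $\JJ_{s,t}^*\eta$ solves a linear ODE with continuous coefficients it is $C^1$ in $s$, while $s\mapsto U(s)$ is H\"older continuous of any exponent $\alpha<1/2$; hence each $f^{(l)}$ lies in $C^{1,\alpha}([t/2,t])$, and the moment bounds \eqref{eq:exp:martingale:lead:to:awesome:2}, \eqref{eq:J:bnd} (together with finiteness of the moments of the $\alpha$--H\"older norm of $W$) give $\E \sup_{\eta \in \Dtest,\, l} \| f^{(l)}\|_{C^{1,\alpha}([t/2,t])}^{p} < \infty$ for every $p$ --- the point being that $\sup_{|\eta|=1}|\langle \JJ_{s,t}^*\eta, \cdot\rangle|$ is controlled by the operator norm $\|\JJ_{s,t}\|$, which does not see $\eta$, and $l$ ranges over a finite set. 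Applying Lemma~\ref{lem:small:to:small:prime:wrapper} on $[t/2,t]$ to the countable family $\{f^{(l)}\}_{\eta\in\Dtest,\,l}$ therefore yields a set $\Omega_\epsilon^{(1)}$ with $\Prb(\Omega_\epsilon^{(1)}) \leq C\epsilon^q$ off which $\sup_{[t/2,t]}|f^{(l)}| \leq \epsilon^\delta$ forces $\sup_{[t/2,t]}|g^{(l)}| \leq \epsilon^{c_1\delta}$ for a universal $c_1 = c_1(\alpha) > 0$.

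Next, write $U(s) = V(s) + \sum_{j=1}^d \sigma_j W^j(s)$, where $V(s) = U_0 - \int_0^s (\nu A U + B(U,U))\,dr$ is $C^1$. Substituting this into $g^{(l)}$ and using bilinearity of $B$ exhibits $g^{(l)}$ on $[t/2,t]$ as a first order Wiener polynomial,
\begin{align*}
  g^{(l)}(s) = A_0^{(l)}(s) + \sum_{j=1}^d A_j^{(l)}(s) W^j(s),
  \qquad A_j^{(l)}(s) = \langle \JJ_{s,t}^*\eta,\, B(\sigma_j,\dir^{(l)}) + B(\dir^{(l)},\sigma_j)\rangle = \langle \JJ_{s,t}^*\eta,\, [[F,\dir^{(l)}],\sigma_j]\rangle,
\end{align*}
with $A_0^{(l)}(s) = \langle \JJ_{s,t}^*\eta, \nu A\dir^{(l)} + B(V(s),\dir^{(l)}) + B(\dir^{(l)},V(s))\rangle$. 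Thus the coefficient $A_j^{(l)}$ is \emph{exactly} the quantity we want to control, and these coefficients are merely continuous in $s$ --- in particular not adapted, since $\JJ_{s,t}^*$ sees the noise all the way up to time $t$ --- which is precisely the generality permitted by Lemma~\ref{lem:wiener:poly:simple:case}. After shifting the driving Brownian motion so it starts at $t/2$ (absorbing the constants $W^j(t/2)$ into $A_0^{(l)}$), Lemma~\ref{lem:wiener:poly:simple:case} applied on $[t/2,t]$ with $r = c_1\delta$ produces a set $\Omega_\epsilon^{(2)}$ --- depending only on $W$, hence uniform over $\eta\in\Dtest$ and $l$ --- with $\Prb(\Omega_\epsilon^{(2)})\leq C\epsilon^q$, off which $\sup_{[t/2,t]}|g^{(l)}| \leq \epsilon^{c_1\delta}$ implies that either $\max_{l,j}\sup_{[t/2,t]}|A_j^{(l)}| \leq (d+3)\epsilon^{c_1\delta/16}$ or $\max_{l,j}\|A_j^{(l)}\|_{\mathrm{Lip}} \geq \epsilon^{-c_1\delta/16}$.

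It remains to discard the second alternative and to collect the exponents. Differentiating $A_j^{(l)}$ in $s$ brings in $\tfrac{d}{ds}\JJ_{s,t}^*\eta$, $V'(s) = -(\nu A U + B(U,U))$ and the constant vectors $\sigma_j W^j(t/2)$, so \eqref{eq:exp:martingale:lead:to:awesome:2} and \eqref{eq:J:bnd} again give $\E\sup_{\eta\in\Dtest,\,l}\max_j\|A_j^{(l)}\|_{\mathrm{Lip}}^p < \infty$ for every $p$; hence, by Chebyshev, the set $\Omega_\epsilon^{(3)} = \{\sup_{\eta\in\Dtest,\,l}\max_j\|A_j^{(l)}\|_{\mathrm{Lip}} \geq \epsilon^{-c_1\delta/16}\}$ has $\Prb(\Omega_\epsilon^{(3)}) \leq C\epsilon^q$. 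On the complement of $\Omega_\epsilon := \Omega_\epsilon^{(1)}\cup\Omega_\epsilon^{(2)}\cup\Omega_\epsilon^{(3)}$ the three implications chain together; absorbing the factor $(d+3)$ by shrinking the exponent slightly for $\epsilon < \epsilon_0$ and carrying out the routine numerology that leads to $\delta/80$ then yields \eqref{eq:implication:for:iteration}. Throughout, the required power $q$ in \eqref{eq:good:set:cond} is obtained by starting the applications of Lemmas~\ref{lem:small:to:small:prime:wrapper} and \ref{lem:wiener:poly:simple:case} from a sufficiently large exponent.

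The main obstacle is conceptual rather than computational: $\JJ_{s,t}^*\eta$ is an anticipating functional of the noise, so the It\={o}--type stochastic integral representation of $g^{(l)}$ is unavailable, and one is forced into the Wiener polynomial viewpoint --- which is exactly why Lemma~\ref{lem:wiener:poly:simple:case} is formulated with arbitrary, non-adapted coefficients. Relatedly, one must be careful that all the exceptional sets are uniform over the (possibly infinite) family $\Dtest$; this works only because the bad set of Lemma~\ref{lem:wiener:poly:simple:case} is coefficient--independent and because every remaining moment bound is dominated by operator norms of $\JJ$ that are independent of $\eta$, with the index $l$ ranging over a finite set.
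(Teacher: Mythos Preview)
Your proposal is correct and follows essentially the same approach as the paper's own proof: a first step using Lemma~\ref{lem:small:to:small:prime:wrapper} on $g_\eta^{(l)}(s)=\langle \JJ_{s,t}^*\eta,\dir^{(l)}\rangle$ to pass to $\langle \JJ_{s,t}^*\eta,[F,\dir^{(l)}]\rangle$, followed by a second step decomposing $U=\bar U+\sum_j\sigma_jW^j$ to exhibit this as a first order Wiener polynomial and invoking Lemma~\ref{lem:wiener:poly:simple:case}, with the Lipschitz alternative discarded via Chebyshev and the moment bounds \eqref{eq:exp:martingale:lead:to:awesome:2}, \eqref{eq:J:bnd}. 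The paper makes the numerology explicit (choosing $\alpha=1/4$ in the first step to get $\epsilon^{\delta/5}$, then $r=\delta/5$ in the second to reach $\epsilon^{\delta/80}$), whereas you leave the exponents symbolic; your remark about shifting $W$ to start at $t/2$ is a point the paper leaves implicit.
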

\begin{proof}
The proof proceeds in two steps.  We first find sets $\Omega_{\epsilon,1}$
for which the analogue of \eqref{eq:good:set:cond} holds and where, off of $\Omega_{\epsilon,1}$,
\begin{align}
  \max_{l = 1, \ldots, m}\sup_{s\in [t/2, t]} |\langle \JJ_{s,t}^* \eta, \dir^{(l)} \rangle| \leq \epsilon^{\delta}
  \quad \Rightarrow  \quad
  \max_{\substack{l = 1,\ldots, m\\ j = 1, \ldots,d}} \sup_{s\in [t/2, t]} |\langle \JJ_{s,t}^* \eta, [F,\dir^{(l)}]\rangle| 
  \leq \epsilon^{\frac{\delta}{5}}.
  \label{eq:implication:for:iteration:step1}
\end{align}
The second step is to find $\Omega_{\epsilon,2}$, again satisfying bounds as in \eqref{eq:good:set:cond} 
so that outside of these exceptional sets
\begin{align}
   \max_{\substack{l = 1,\ldots, m\\ j = 1, \ldots,d}} \sup_{s\in [t/2, t]} |\langle \JJ_{s,t}^* \eta, [F,\dir^{(l)}]\rangle| 
  \leq \epsilon^{\frac{\delta}{5}}
  \quad \Rightarrow  \quad
  \max_{\substack{l = 1,\ldots, m\\ j = 1, \ldots,d}} 
  \sup_{s\in [t/2, t]} |\langle \JJ_{s,t}^* \eta, [[F,\dir^{(l)}],\sigma_j]\rangle| \leq \epsilon^{\frac{\delta}{80}}.
  \label{eq:implication:for:iteration:step2}
\end{align}
Combining the sets by forming $\Omega_\epsilon =  \Omega_{\epsilon,1} \cup \Omega_{\epsilon, 2}$
yields the desired result.

The first step relies on another application of Lemma~\ref{lem:small:to:small:prime:wrapper}.  Define 
\begin{align*}
  g_\eta^{(l)}(s) = \langle \JJ_{s,t}^* \eta, \dir^{(l)} \rangle,  \quad \eta \in \Dtest,
\end{align*}
and observe that
\begin{align*}
  \frac{d}{ds}g_\eta^{(l)}(s)  = \langle \frac{d}{ds}\JJ_{s,t}^* \eta, \dir^{(l)} \rangle=\langle \JJ_{s,t}^* \eta, [F,\dir^{(l)}] \rangle. 
\end{align*}
In order to apply \eqref{eq:exceptional:sets:est:derivatives} we estimate 
\begin{align}
  \E  \left( \sup_{\eta \in \Dtest} \| \langle \JJ_{s,t}^* \eta, [F,\dir^{(l)}] \rangle \|_{C^{1/4}}^{8q } \right)
   \leq  C \left[  \E  \left( \sup_{\eta \in \Dtest} \| \JJ_{s,t}^* \eta \|_{C^{1/4}}^{16q } \right) \E \left(\| [F,\dir^{(l)}] \|_{C^{1/4}}^{16q} \right)
   \right]^{1/2}.
     \label{eq:careful:brother:rat}
\end{align}
To address the first term, referring to \eqref{eq:J:op:def}, we find
\begin{align}
  \| \JJ_{s,t}^* \eta \|_{C^{1/4}}^{16q }  
  \leq& \sup_{s \in [0, t]} \| \JJ_{s,t} \|^{16q} 
  + \sup_{s \in [0,t]} |\nu A\JJ_{s,t}^*\eta + (\nabla B(U))^* \JJ_{s,t}^*\eta|^{16q} 
  \notag\\
    \leq& C\left(1 + \sup_{s \in [0, t]} \| \JJ_{s,t} \|^{32q} + \sup_{s \in [0,t]} |U(s)|^{32q}\right).
    \label{eq:U:quant:est:2}
\end{align}
On the other hand, observing that
\begin{align}
   [F,\dir^{(l)}] = \nu A\dir^{(l)} + B(U, \dir^{(l)}) + B(\dir^{(l)},U)
   \label{eq:first:brack:explicit}
\end{align}
we have that $\| [F,\dir^{(l)}] \|_{C^{1/4}}^{16q} \leq C (\| U\|_{C^{1/4}}^{16q} + 1)$.  Now
\begin{align}  
   \|U\|_{C^{1/4}}^{16q} &\leq 
   C\left(\sup_{s \in [0,t]} |U|^{16q} + \sup_{s \in [0,t]} |\nu AU + B(U,U)|^{16q} + \|\sigma W\|_{C^{1/4}}^{16q}\right) \notag\\
   &\leq
   C\left(1 + \sup_{s \in [0,t]} |U(s)|^{32q} + \|\sigma W\|_{C^{1/4}}^{16q}\right).
   \label{eq:U:quant:est:3}
\end{align}
Combining \eqref{eq:U:quant:est:2}, \eqref{eq:U:quant:est:3}, making use of the 
regularity properties of Brownian motion\footnote{A convenient way to estimate
$\E \|\sigma W\|_{C^{1/4}}$ is to use a fractional (in time) Sobolev embedding
argument; see \cite{ZabczykDaPrato1992}.} and \eqref{eq:exp:martingale:lead:to:awesome:2} 
we finally conclude that
\begin{align*}
 \E  \left( \sup_{\eta \in \Dtest} \| \langle \JJ_{s,t}^* \eta, [F,\dir^{(l)}] \rangle \|_{C^{1/4}}^{8q } \right)
 \leq C \E\left(1 + \sup_{s \in [0, t]} \| \JJ_{s,t} \|^{32q} + \sup_{s \in [0,t]} |U(s)|^{32q}\right)
 \leq C \exp(\expCon |U_0|^2).
\end{align*}
We therefore invoke Lemma~\ref{lem:small:to:small:prime:wrapper} and infer the existence of sets $\Omega_{\epsilon,1} = \cup_{l = 1, \ldots, m} \Omega_{\epsilon,1, l}$ with $\Prb( \Omega_{\epsilon,1}) \leq C \epsilon^q$ such that off of $\Omega_{\epsilon,1}$, \eqref{eq:implication:for:iteration:step1} holds.

We turn now to finding sets on which the second implication \eqref{eq:implication:for:iteration:step2} holds. 
Working from \eqref{eq:first:brack:explicit} and recalling that $U = \bar{U} + \sum_{j = 1}^d \sigma_j W^j$,
where $\bar{U}$ solves \eqref{eq:shifted:equation:bm},
we have
\begin{align}
   \langle \JJ_{s,t}^* \eta, [F,\dir^{(l)}] \rangle
   =   \langle \JJ_{s,t}^* \eta,  \nu A\dir^{(l)} + B(\bar{U}, \dir^{(l)}) + B(\dir^{(l)},\bar{U})\rangle 
        + \sum_{j =1}^d \langle \JJ_{s,t}^* \eta, [[F,\dir^{(l)}], \sigma_j] \rangle W^j.
        \label{eq:our:big:wiener}
\end{align}
For $\epsilon > 0$, take $\tilde{\Omega}_{\epsilon}$ to be the exceptional sets corresponding to the implication 
\eqref{eq:good:wiener:behavior} obtained in Lemma~\ref{lem:wiener:poly:simple:case} for $r = \delta/5$.
Now define $\Omega_{\epsilon,2} = \tilde{\Omega}_{\epsilon} \cup \bigcup_{j= 0}^{d} \tilde{\Omega}_{\epsilon,j}$ 
where\footnote{Note here that we needed that the implication \eqref{eq:good:wiener:behavior}
to hold universally for all Wiener polynomials on $\tilde{\Omega}_{\epsilon}$ since otherwise we would be 
intersecting a countable number of sets.}
\begin{align*}
   \tilde{\Omega}_{\epsilon,0} 
  := \left\{ \sup_{\eta \in \Dtest}\sup_{s \in [0,t]} \left| \frac{d}{ds} \langle \JJ_{s,t}^* \eta,  \nu A\dir^{(l)} 
  + B(\bar{U}, \dir^{(l)}) + B(\dir^{(l)},\bar{U})\rangle \right| \geq \epsilon^{- \delta / 80} \right\}
\end{align*}
and
\begin{align*}  
   \tilde{\Omega}_{\epsilon,j} := \left\{ \sup_{\eta \in \Dtest}\sup_{s \in [0,t]}
  \left|\frac{d}{ds} \langle \JJ_{s,t}^* \eta, [[F,\dir^{(l)}], \sigma_j] \rangle \right| \geq \epsilon^{- \delta / 80} \right\}.
\end{align*}
Notice that the second desired implication \eqref{eq:implication:for:iteration:step2} holds over all $\eta \in \Dtest$
off of $\Omega_{\epsilon,2}$.  Thus, the proof will be complete once we find suitable a bound for this set. 

Observe that
\begin{align*}
   \Prb( {\Omega}_{\epsilon,0} )
   \leq C\epsilon^{q} \!
   \left( \E \left(\sup_{\eta \in \Dtest}\sup_{s \in [0,t]}\|\tfrac{d}{ds} \JJ_{s,t}^* \eta\|^{2p} + \sup_{s \in [0,t]}\| \JJ_{s,t}\|^{2p} \right)
   	\cdot \E \left(\sup_{s \in [0,t]}(1 + |U|^{4p} + |\sigma W|^{2p}) \right) \right)^{1/2}
   \! \! \leq C \epsilon^{q},
\end{align*}
for some sufficiently large $p = p(\delta)$ and where
$C = C(\nu, |\sigma|, |U_{0}|)$ is independent of $\epsilon$.  
Here we have used \eqref{eq:J:bnd} and bounds as in \eqref{eq:U:quant:est:2} with \eqref{eq:exp:martingale:lead:to:awesome:2} to achieve the final
inequality.  For $\tilde{\Omega}_{\epsilon,j}$ we have
\begin{align*}
	 \Prb( {\Omega}_{\epsilon,j} )    \leq C\epsilon^{q} \E \left(\sup_{\eta \in \Dtest}\sup_{s \in [0,t]}\|\tfrac{d}{ds} \JJ_{s,t}^*\|^{p}\right)
	 \leq C\epsilon^q.
\end{align*}
Combining these bounds therefore completes the proof of Lemma~\ref{lem:direction:to:F:bomb}.

\end{proof}
\begin{Rmk}\label{rmk:regularity:issue}
  As mentioned above an estimate very similar to \eqref{eq:Mall:spec:est} may be obtained for 
  certain infinite dimensional problems.   While the approach is very 
  similar one need to take considerable care with the regularity of solutions in order to obtain
  bounds analogous to e.g. \eqref{eq:U:quant:est:1} or \eqref{eq:U:quant:est:2}.  Here one needs
  to establish higher regularity (in space) bounds, to consider more restrictive time intervals in Lemmas~\ref{lem:inital:step},
  \ref{lem:direction:to:F:bomb} or both.
 See \cite{HairerMattingly2011, FoldesGlattHoltzRichardsThomann2013} for further details.
\end{Rmk}

\subsection{Combining the Chain of Implications (proof of Proposition~\ref{prop:main:spec:bound}, conclusion)}

Fix any $q > 0$ and let $n$ be such that $\mbox{span} (\mathcal{W}_n) = \RR^N$.   Pick a countable
dense subset $\Dtest$ of $\SN$.  
We apply Lemma~\ref{lem:inital:step} to determine
a set $\Omega_{\epsilon, 0}$
off of which \eqref{eq:implication:initial:step} holds over $\eta \in \Dtest$.  Next, for each $k = 1, \ldots, n$, 
we apply Lemma~\ref{lem:direction:to:F:bomb}
with $\delta = (1/80)^{k-1}$ and where the elements
$\xi^{(l)}$ consist of the elements in $\mathcal{W}_{k-1}$ defined by \eqref{eq:simplified:hor}.
We thus obtain sets $\Omega_{\epsilon, k}$
such that off these sets the implication \eqref{eq:implication:for:iteration} holds for $\eta \in \Dtest$.
Now take $\Omega_{\epsilon} = \cup_{k = 0}^n \Omega_{\epsilon, k}$ and observe that
\begin{align*}
  \Prb(\Omega_\epsilon) \leq C \epsilon^q
\end{align*}
for some $C = C(\nu, |U_0|)$, independent of $\epsilon$ and, off of this exceptional set,
\begin{align}
   \langle \MM_{0,t} \eta, \eta \rangle  \leq \epsilon
   \quad \Rightarrow \quad  \sum_{\xi \in \mathcal{W}_n} \langle \eta, \xi \rangle^2 \leq C\epsilon^\gamma
   \label{eq:total:imp}
\end{align}
for some $C$, $\gamma$ independent of $\epsilon$, and holds for every $\eta \in \Dtest$.   On the other hand, since
$\mathcal{W}_n$ is assumed to be a spanning set, there exist $\kappa > 0$ such that
\begin{align}
  \sum_{\xi \in \mathcal{W}_n} \langle \eta, \xi \rangle^2 \geq \kappa, \textrm{ for every } \eta \in \SN.
  \label{eq:span:conclus}
\end{align}
Combining \eqref{eq:total:imp} with \eqref{eq:span:conclus} we conclude that
\begin{align*}
   G(\eta) := \frac{\langle \MM_{0,t} \eta, \eta \rangle}{|\eta|^2} \geq \epsilon
\end{align*}
for every $\eta$ on a dense subset $\RR^N$ off of $\Omega_\epsilon$, for all $\epsilon$ sufficiently small.  
Since $G$ is continuous in $\eta$ the desired conclusion \eqref{eq:Mall:spec:est} now follows along with the proof
of Proposition~\ref{prop:main:spec:bound} and hence of Proposition~\ref{prop:strong:feller:cond}.

\section*{Acknowledgments}
These notes were written as a supplement to lectures
given in as a mini course at the Thematic Program on Incompressible Fluid Dynamics
at Instituto Nacional de Matem\'atica Pura e Aplicada (IMPA) in Rio de Janeiro, Brazil.
I am grateful to Helena Nussenzveig Lopes and Milton Lopes Filho for the invitation to give these lectures.  
This work was partially supported by the National Science Foundation under the grant
NSF-DMS-1313272.

\addcontentsline{toc}{section}{References}
\begin{footnotesize}
\bibliographystyle{alpha}
\bibliography{bib}
\end{footnotesize}

\vspace{.3in}
\noindent
Nathan Glatt-Holtz\\ {\footnotesize
Department of Mathematics\\
Virginia Polytechnic Institute and State University\\
Web: \url{www.math.vt.edu/people/negh/}\\
 Email: \url{negh@vt.edu}} \\[.2cm]

\end{document}